\numberwithin{equation}{section}
\newtheorem{theorem}{Theorem}[section]
\newtheorem{lemma}[theorem]{Lemma}
\DeclareMathAlphabet{\mathpzc}{OT1}{pzc}{m}{it}
{\theoremstyle{definition}
\newtheorem{definition}[theorem]{Definition}
\newtheorem{remark}[theorem]{Remark}
\newtheorem{notation}[theorem]{Notation}

\newtheorem{claim}[theorem]{Claim}

}
\newcommand{\conv}{\mathrm{conv}}
\definecolor{aquam}{rgb}{0.5,1.0,1.0}
\definecolor{bbrown}{rgb}{0.75,0.38,0.15}
\definecolor{Cyan}{rgb}{0,0.6,0.6}
\definecolor{Darkblue}{rgb}{0,0,1}
\definecolor{Dodgerblue2}{rgb}{0,0.5,1}
\definecolor{Green}{rgb}{0,0.6,0.06}
\definecolor{Kahki}{rgb}{1,1,0.5}
\definecolor{Magenta}{rgb}{0.7,0,0.7}
\definecolor{bMagenta}{rgb}{1,.6,1}
\definecolor{Orange}{rgb}{0.8,0.3,0}
\definecolor{dOrchid}{rgb}{0.7,0.2,0.4}
\definecolor{Orchid}{rgb}{1,0.5,1}
\definecolor{Purple}{rgb}{0.65,0.07,0.85}
\definecolor{Royalblue}{rgb}{0.6,0.85,0.87}
\definecolor{Tan}{rgb}{0.54,0.42,0.23}
\definecolor{bTan}{rgb}{0.94,0.82,0.63}
\definecolor{zoltan}{rgb}{0,0.1,0.3}
\definecolor{Turquoise}{rgb}{0,0.85,0.87}
\definecolor{Yellow}{rgb}{1,1,0}
\definecolor{darkamber}{rgb}{0.4,0.19,0.28}
\definecolor{bYellow}{rgb}{1,1,0.6}
\definecolor{bRed}{rgb}{1,0.7,0.7}
\definecolor{boxcolb}{rgb}{0.87,0.77,0.75}%rosybrown
\definecolor{boxcol}{rgb}{0.6,0.85,0.87}%cadetblue
\definecolor{boxcolgreen}{rgb}{0.64,0.93,0.79}
\definecolor{boxcolaa}{rgb}{.75,.99,.70}
\definecolor{boxcolbb}{rgb}{0.39,0.50,0.56}
\definecolor{boxcolcc}{rgb}{1,0.81,0.65}
\definecolor{yy}{rgb}{0.43,0.21,.18}
\definecolor{gA}{gray}{0.5}
\definecolor{gB}{gray}{0.8}
\definecolor{gC}{gray}{0.9}
\begin{document}

%\title{Improved estimates on Hausdorff dimension of generic H\"older level sets}
\title{Better estimates of  H\"older thickness of fractals}
\author{Zolt\'an Buczolich$^*$}
\address[Z. Buczolich, B. Maga]{Department of Analysis, ELTE E\"otv\"os Lor\'and University, 
P\'azm\'any P\'eter S\'et\'any 1/c, 1117 Budapest, Hungary}
\address[B. Maga, G. Vértesy]{HUN-REN Alfr\'ed R\'enyi Institute of Mathematics, Re\'altanoda street 13-15, 1053 Budapest, Hungary}
\email{zoltan.buczolich@ttk.elte.hu}
\urladdr{http://buczo.web.elte.hu, ORCID Id: 0000-0001-5481-8797}

\author{Bal\'azs Maga$^\text{\textdagger}$}
\email{mbalazs0701@gmail.com}
\urladdr{  http://magab.web.elte.hu/}

\author{ G\'asp\'ar V\'ertesy$^\text{\textdaggerdbl}$}
\email{vertesy.gaspar@gmail.com}

\thanks{\scriptsize $^*$
 This author was also supported by the Hungarian National Research, Development and Innovation Office--NKFIH, Grant 124003.
}
\thanks{\scriptsize $^\text{\textdagger}$ This author was supported by the \'UNKP-23-4 New National Excellence Program of the Ministry for Culture and
Innovation from the source of the National Research, Development and Innovation Fund, 
and by the Hungarian National Research, Development and Innovation Office-NKFIH, Grant 124749.}
\thanks{\scriptsize $^\text{\textdaggerdbl}$ This author was supported by the \'UNKP-20-3 New National Excellence Program of the Ministry for Innovation and Technology from the source of the National Research, Development and Innovation Fund, and by the Hungarian National Research, Development and Innovation Office–NKFIH, Grant 124749.  
 \newline\indent {\it Mathematics Subject
Classification:} Primary :   28A78,  Secondary :  26B35, 28A80% \xcr 76N99 ezt kivenném. \xcx
\newline\indent {\it Keywords:}   H\"older continuous function, Hausdorff dimension, level set, Sierpi\'nski triangle, 
fractal conductivity, phase transition.}

\date{\today}

\begin{abstract}
    Dimensions of level sets of generic continuous functions and generic Hölder functions defined on a fractal $F$ encode information about
    the geometry, ``the thickness" of $F$. While in the continuous case this quantity is related to a reasonably tame dimension notion which is called the topological Hausdorff dimension of $F$,
    the Hölder case seems to be highly nontrivial. A number of earlier papers attempted to deal with this problem, carrying out investigation
    in the case of Hausdorff dimension and box dimension.
    In this paper we continue our study of the Hausdorff dimension of almost every level set of generic $1$-H\"older-$\aaa$ functions,
    denoted by $D_{*}(\aaa, F)$.
    We substantially improve
    previous lower and upper bounds on $D_{*}(\aaa, \Delta)$, where $\Delta$ is the Sierpiński triangle, achieving asymptotically equal bounds as $\alpha\to 0+$.
    Using a similar argument, we also give an even stronger lower bound on the generic lower box dimension of level sets.
    Finally, we construct a connected fractal $F$ on which there is a 
    phase transition of $D_{*}(\aaa, F)$, thus providing the first example exhibiting this behaviour.
\end{abstract}

\maketitle

\setcounter{tocdepth}{3}

\tableofcontents

%%%%%%%%%%%%%%%%%%%%%%%%%%%%%%%%
%%%%%%%%%%%%%%%%%%%%%%%%%%%%%%%%
%%%%%%%%%%%%%%%%%%%%%%%%%%%%%%%%
%%%%%%%%%%%%%%%%%%%%%%%%%%%%%%%%
%%%%%%%%%%%%%%%%%%%%%%%%%%%%%%%%
%%%%%%%%%%%%%%%%%%%%%%%%%%%%%%%%

\section{Introduction}

The Hausdorff dimension of level sets of the generic continuous function defined on $[0, 1]^p$ was determined in \cite{BK}. By introducing the concept of topological Hausdorff dimension,
\cite{BBEtoph} generalized this result to a large class of fractals. The connection of this new notion of dimension and the geometry of level sets was investigated more thoroughly 
in \cite{BBElevel}. We note that topological Hausdorff dimension
sparked interest on its own right: in \cite{MaZha}  the authors studied topological Hausdorff dimension of fractal squares, and it was also mentioned and used in Physics papers, see for example
\cite{Balankintoph}, \cite{Balankinfracspace}, \cite{Balankintransport}, \cite{Balankintoph2}, and  \cite{Balankinfluid}.

One might find studying the level sets of continuous functions slightly inconvenient, as such functions depend only on the topology of the domain $F$, being a subset of some Euclidean space in what follows. 
While this independence from the metric structure is not inherited by the Hausdorff dimension of the level sets, due to Hausdorff dimension's dependence on the metric,
it seems reasonable to expect that the investigation of other classes of functions, subject to
certain metric conditions can be more fruitful. 
The simplest and most natural way to impose such a condition is considering H\"older functions instead of arbitrary continuous functions.
More specifically, we investigate the level sets of generic 1-H\"older-$\aaa$ functions, where genericity is understood in the Baire category sense in the topology given by the supremum norm.

We started to deal with this question in \cite{sier}. We defined $D_{*}(\aaa, F)$ as the essential supremum 
of the Hausdorff dimensions of the level sets of a generic
$1$-H\"older-$\aaa$ function defined on $F$, and verified that this quantity is well-defined for compact $F$.
We also showed that if $\alpha\in(0,1)$, then for connected self{-}{}similar sets, like the Sierpi\'nski triangle
$D_{*}(\aaa, F)$ equals  the Hausdorff dimension 
of almost every level-set in the range of  a generic 1-H\"older-$\aaa$ function, that is, 
one can talk about the Lebesgue-typical dimension of the generic 1-H\"older-$\aaa$ function.
We extended the notation $D_{*}(\aaa, F)$ to $\aaa = 0$ by taking the Lebesgue-typical dimension of the generic continuous function.

In a recently published preprint \cite{buczolich2023box}, the first two authors investigated the box dimension of level sets from a similar point of view,
by introducing a framework akin to the one just described. We introduced the quantities $D_{\underline{B}*}(\aaa, F)$ and $D_{\overline{B}*}(\aaa, F)$, analogous to $D_{*}(\aaa, F)$,
which in some sense correspond to the generic lower and upper box dimension of level sets.

It turned out that the Hausdorff dimension and the upper box dimension of level sets are dual in a certain sense:
while the generic value of the former one measures how well the level sets can be compressed inside the fractal, the latter one concerns with
how well they can be spread out. The problem of lower box dimension of level sets remained mostly open: while the generic Hausdorff dimension of level sets obviously cannot exceed the 
generic lower box dimension of level sets, that is $D_{*}(\aaa, F)\leq D_{\underline{B}*}(\aaa, F)$, we argued that it seems difficult to actually prove stronger upper bounds for the former one, 
as currently we have to rely on the same toolset.
(For details, see discussion surrounding \cite[Theorems~3.2-3.3]{buczolich2023box}.) 

In \cite{sierc}, we carried on our research about $D_{*}(\aaa, F)$ and presented results of more quantitative nature. 
In particular, with $\Delta$ being the Sierpiński triangle, lower and upper estimates on $D_{*}(\aaa, \Delta)$ were provided. One of the main goals of this paper is to improve these bounds.
Capitalizing on new ideas, in Sections \ref{sec:sier_triangle_lower} and \ref{sec:sier_triangle_upper}, we prove 
substantially stronger bounds from both sides. On Figure \ref{fig:siertestimates} one can see the improvement: actually, the new bounds are asymptotically equal as $\alpha\to 0+$.

\begin{figure}[h] 
	\includegraphics[scale=0.75]{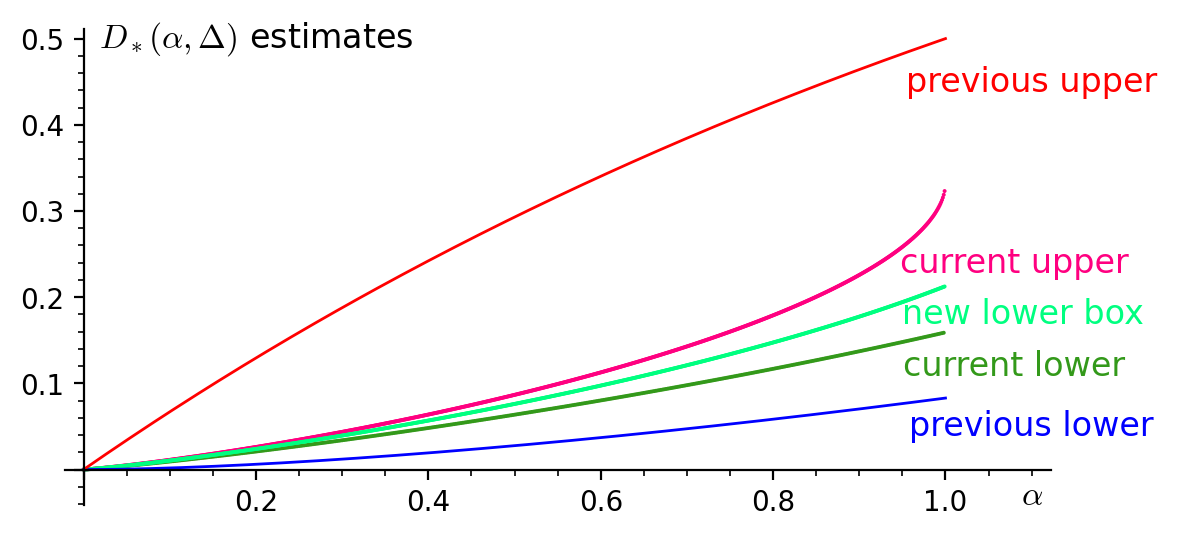}

    \caption{Earlier and current estimates on $D_*(\aaa, \Delta)$, new lower box standing for the lower bound on $D_{\underline{B}*}(\aaa, \Delta)$} 
   
     \label{fig:siertestimates}
\end{figure}

In this paper, we also take an important first step for distinguishing the quantities $D_{*}(\aaa, F), D_{\underline{B}*}(\aaa, F)$, which seems to be a delicate problem, as noted above. 
We conclude Section \ref{sec:sier_triangle_lower} by presenting a lower bound on
$D_{\underline{B}*}(\aaa, \Delta)$ (also displayed on Figure \ref{fig:siertestimates}), which relies on the same approach as the preceding proof of our lower bound on $D_{*}(\aaa, \Delta)$, 
but exceeds it for any $\alpha>0$.
While this development does not yield that 
$D_{*}(\aaa, \Delta)<D_{\underline{B}*}(\aaa, F)$ for any $\alpha$, it certainly fuels this hope. It should be noted that the upper bound $D_{*}(\aaa, \Delta)$ we provide is also an upper bound on 
$D_{\underline{B}*}(\aaa, F)$, thus $D_{*}(\aaa, \Delta)$ and $D_{\underline{B}*}(\aaa, F)$ are asymptotically equal as $\alpha\to 0+$, further emphasizing that they cannot differ by much, while they
sharply differ from the analogous quantity defined for the upper box dimension.

In \cite{sierc}, the notion of phase transition was also introduced: $D_*(\cdot, F)$ has phase transition, if for some
$\alpha_0>0$, we have $D_*(\alpha, F) = D_*(0, F)$ for $\alpha<\alpha_0$, but $D_*(\alpha, F) > D_*(0, F)$ for $\alpha>\alpha_0$.
It is natural to ask whether such an $F$ exists, and in \cite{sierc}, we answered this question affirmatively.
However, the $F$ provided was not connected, which illustrated that our understanding of phase transition
there was incomplete: disconnected fractals easily exhibit peculiar behaviour
due to the fact that disconnectedness enables one to define functions with less limitation. It remained a nontrivial question whether a connected example for phase transition can be produced,
one that sparked interest during seminars when we presented our results.
In Section \ref{sec:phase_transition}, we answer this question affirmatively as well by providing a connected self-similar example. We note that our construction exhibits phase transition on $D_{\underline{B}*}(\aaa, F)$ as well.

%%%%%%%%%%%%%%%%%%%%%%%%%%%%%%%%
%%%%%%%%%%%%%%%%%%%%%%%%%%%%%%%%
%%%%%%%%%%%%%%%%%%%%%%%%%%%%%%%%
%%%%%%%%%%%%%%%%%%%%%%%%%%%%%%%%
%%%%%%%%%%%%%%%%%%%%%%%%%%%%%%%%
%%%%%%%%%%%%%%%%%%%%%%%%%%%%%%%%

\section{Notation and preliminaries} \label{sec:prelimin}

The interior of a set $H$ is denoted by $\inte{H}$.

Following \cite{sier}, we introduce the following notation: if $(M,\mathpzc{d})$ 
is a metric space, then let $C(M)$ be the space of continuous functions  from $M$ to $\R$.   
Moreover for every $\alpha\in(0,1]$ and $c>0$ set
$$
C_{c}^{\alpha}(M) := \{ f\in C(M) : \text{for every $a,b\in M$ we have $|f(a)-f(b)|\le c(\mathpzc{d}(a,b))^\alpha$}\}
$$
and 
$$
C_{c^-}^{\alpha}(M) := \Union_{c'\in [0,c)} C_{c'}^{\alpha}(M).
$$

In what follows, our domain is some compact $F\subseteq \mathbb{R}^p$. Let $D^f(r,F)=D^f(r)=\dim_H(f^{-1}(r))$ for any function $f: F\to \mathbb{R}$,
that is $D^f(r)$ denotes the Hausdorff dimension of the function $f$ at level $r$.
We put 
\begin{displaymath}
D_{*}^f(F)=\sup\{d: \lambda\{r : D^f(r,F)\geq{d}\}>0\},
\end{displaymath}
where $\lambda$ denotes the one-dimensional Lebesgue measure.

It is not clear why $D_{*}^{f}(F)$ should have a generic value over $C_{1}^{\aaa}(F)$, thus some extra care is required.
We denote by $\mg_{1,\aaa}(F)$, or by simply $\mg_{1,\aaa}$ the set  of dense $G_{\ddd}$
sets in $C_{1}^{\aaa}(F)$, and we put
\begin{equation}\label{*defDcsaF}
D_{*}(\aaa,F)=\sup_{\cag\in \mg_{1,\aaa}}\inf\{ D_{*}^{f}(F):f\in \cag \}.
\end{equation}

This quantity was the main object of interest in \cite{sier} and \cite{sierc}, to have a short name for it one can call it the $\aaa$-Hölder-thickness of the fractal $F$.

Most of the previous framework can be analogously introduced with the Hausdorff dimension being replaced by the lower or upper box dimension,
as discussed in \cite{buczolich2023box}. While upper box dimension requires some special care and appropriate modifications, which we omit here as we will not
work with upper box dimension in this paper, we can setup the same concepts for lower box dimension
by simply replacing Hausdorff dimension in the original definitions. This is how we define $D_{\underline{B}}^f(r,F),\  D_{\underline{B}*}^f(F),\  D_{\underline{B}*}(\aaa,F)$, as analogues
of $D^f(r,F),\  D_{*}^f(F),\  D_{*}(\aaa,F)$.

In \cite[Theorem~3.3]{sier} (resp. \cite[Theorem~3.2]{buczolich2023box}), we established that $D_{*}(\aaa,F)$ (resp. $D_{\underline{B}*}(\aaa,F)$) is indeed the generic value of
$D_{*}^{f}(F)$ (resp. $D_{\underline{B}*}^f(F)$) over $C_{1}^{\aaa}(F)$, if some technical conditions hold:

%%%%%%%%%%%%%%%%%%%%%%%%%%%%%%%%
%%%%%%%%%%%%%%%%%%%%%%%%%%%%%%%%
%%%%%%%%%%%%%%%%%%%%%%%%%%%%%%%%
%%%%%%%%%%%%%%%%%%%%%%%%%%%%%%%%

\begin{theorem} \label{*thmgenex}
    If   $0< \aaa\leq 1$  and $F\subset\R^p$ is compact, then 
    \begin{itemize}
        \item there is a dense $G_\delta$ subset $\cag$ of $C_1^\alpha(F)$ 
        such that for every $f\in\cag$ we have $D_*^f(F) = D_*(\alpha,F)$.
        \item there is a dense $G_\delta$ subset $\cag$ of $C_1^\alpha(F)$ 
        such that for every $f\in\cag$ we have $D_{\underline{B}*}^f(F) = D_{\underline{B}*}(\aaa,F)$.
    \end{itemize}
\end{theorem}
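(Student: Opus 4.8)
The plan is to exhibit $D_*(\aaa,F)$ as the unique generic value of the functional $f\mapsto D_*^f(F)$ on $C_1^\aaa(F)$, treating the two dimension notions by one argument. First I would note that $C_1^\aaa(F)$ is a Baire space: the inequality $|f(a)-f(b)|\le(\mathpzc{d}(a,b))^\aaa$ is preserved under uniform limits, so $C_1^\aaa(F)$ is a closed, hence complete, subspace of $(C(F),\|\cdot\|_\infty)$ — in fact, by the Arzel\`a--Ascoli theorem, it is locally compact and $\sigma$-compact — and therefore a countable intersection of dense $G_\delta$ subsets of it is again a dense $G_\delta$ set. I will describe the argument for $\dim_H$; the lower box dimension statement is obtained verbatim with $\dim_H$ replaced by $\underline{\dim}_B$ throughout, only the covering estimate below changing, and for $\underline{\dim}_B$ it is if anything more transparent since that estimate is a direct statement about covering numbers.

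The lower bound is soft. By the definition of $D_*(\aaa,F)$ as a supremum over $\mg_{1,\aaa}$, for each $n$ I can choose a dense $G_\delta$ set $\cag_n$ with $D_*^f(F)\ge D_*(\aaa,F)-1/n$ for every $f\in\cag_n$; then $\cag^-:=\bigcap_n\cag_n$ is a dense $G_\delta$ set on which $D_*^f(F)\ge D_*(\aaa,F)$. Hence it remains to produce a dense $G_\delta$ set on which $D_*^f(F)\le D_*(\aaa,F)$, as its intersection with $\cag^-$ then works. Equivalently — writing $E:=\{f\in C_1^\aaa(F):D_*^f(F)>D_*(\aaa,F)\}=\bigcup_k E_k$ with $E_k:=\{f:D_*^f(F)\ge D_*(\aaa,F)+1/k\}$ — I must show that $E$ is meager; this is precisely the assertion that the ``$\sup\inf$'' defining $D_*(\aaa,F)$ agrees with the corresponding ``$\inf\sup$'', and it is the real content of the theorem.

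For the meagerness of $E$ I would prove, for each $k$, a spreading lemma: for every $g\in C_1^\aaa(F)$ and every $\varepsilon>0$ there is $h\in C_1^\aaa(F)$ with $\|h-g\|_\infty<\varepsilon$ admitting a sup-norm neighbourhood on which $D_*^{h'}(F)<D_*(\aaa,F)+1/k$; this makes $E_k$ nowhere dense (its closure misses the dense open union of these neighbourhoods), so $E$ is meager and the proof closes. One builds $h$ by a multi-scale redistribution of the values of $g$: on the scale at which $g$ varies by less than $\varepsilon$ one keeps $h$ in an $\varepsilon$-band around $g$, but within each such cell one forces $h$ to be as ``spread out'' over the much finer size-$2^{-m}$ cells of $F$ as the combinatorics of $F$ permit, so that at almost every level $r$ only the near-extremal number of these cells — the number governing $D_*(\aaa,F)$ — lies within $2^{-m\aaa}$ of $r$; a covering argument then bounds $\dim_H h^{-1}(r)$ by $D_*(\aaa,F)+\tfrac{1}{2k}$ for a.e.\ $r$ once $m$ is large, and the leftover slack $\tfrac{1}{2k}$ absorbs any perturbation of size $<2^{-m\aaa}$, giving the stated stability. (If one only establishes the pointwise version — some $h$ near $g$ with $D_*^h(F)<D_*(\aaa,F)+1/k$ — one can still conclude using that $E_k$ has the Baire property: $\dim_H$ is Borel on the compact subsets of $F$ in the Hausdorff metric and $(f,r)\mapsto f^{-1}(r)$ is Borel, so $E_k$ is analytic; but the neighbourhood version bypasses this.)

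The main obstacle is exactly this spreading lemma, and within it the insistence on a definite margin so that the upper bound on level-set dimensions survives small perturbations rather than merely holding at the single function $h$: this is where the metric geometry of $F$ and the H\"older exponent $\aaa$ genuinely enter, and it amounts to re-deriving the relevant half of the quantitative description of $D_*(\aaa,F)$. Carrying out the ``near-extremal redistribution'' on an arbitrary compact $F$, rather than a self-similar one where the cell pattern repeats at every scale, is the delicate part; everything else — completeness, the countable-intersection bookkeeping, and the descriptive-set-theoretic measurability — is routine.
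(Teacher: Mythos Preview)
The paper does not prove this theorem here; it is quoted from the earlier papers \cite{sier} and \cite{buczolich2023box}, and only the key technical ingredient, Lemma~\ref{*lemdfb}, is recorded. So there is no in-paper proof to compare against line by line, but the route signalled by Lemma~\ref{*lemdfb} is genuinely different from yours, and your proposal has a real gap.

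Your lower bound is correct. The problem is entirely in the ``spreading lemma''. To build $h$ near an arbitrary $g$ with $D_*^{h}(F)<D_*(\alpha,F)+1/k$ you propose a ``near-extremal redistribution'' whose output is supposed to have level sets of the dimension ``governing $D_*(\alpha,F)$''. For a general compact $F$ this is circular: the only description of $D_*(\alpha,F)$ available is the variational one in \eqref{*defDcsaF}, and nothing in your sketch explains why the smallest $D_*^{h}(F)$ achievable by a concrete construction inside an arbitrary ball of $C_1^{\alpha}(F)$ must agree with that $\sup$--$\inf$. You are, in effect, assuming you already know how to manufacture functions realising $D_*(\alpha,F)$ in order to prove that $D_*(\alpha,F)$ is the generic value; you yourself flag this (``re-deriving the relevant half of the quantitative description of $D_*(\alpha,F)$'') without resolving it. Your parenthetical fallback is also incorrect: density of $\{f:D_*^{f}(F)<D_*(\alpha,F)+1/k\}$ together with the Baire property of its complement $E_k$ does \emph{not} force $E_k$ to be meager --- a set with the Baire property can be comeager while its complement is dense.

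The approach via Lemma~\ref{*lemdfb} sidesteps this circularity. That lemma is a \emph{relative} statement: for any countable dense family $\{f_k\}$ in an open $\mathcal U$, there is a dense $G_\delta$ in $\mathcal U$ on which $D_*^{f}(F)\le\sup_k D_*^{f_k}(F)$. Proving it requires only controlling how level-set dimensions behave under generic perturbation of a \emph{fixed} $f_k$ --- comparing $f$ to the nearby $f_k$, never to an unknown global optimum. The value $D_*(\alpha,F)$ then emerges a posteriori by combining this perturbation stability with the definition \eqref{*defDcsaF}; one never has to exhibit a single function whose level sets already achieve $D_*(\alpha,F)$. That decoupling --- establishing genericity via a stability-under-perturbation lemma, separately from identifying the generic value --- is the idea your outline is missing.
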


This result relies on the following lemma (the original statement for Hausdorff dimension is \cite[Lemma~7.1]{sier}, while the extension to lower box dimension is \cite[Lemma~3.1]{buczolich2023box}):

%%%%%%%%%%%%%%%%%%%%%%%%%%%%%%%%
%%%%%%%%%%%%%%%%%%%%%%%%%%%%%%%%
%%%%%%%%%%%%%%%%%%%%%%%%%%%%%%%%

\begin{lemma}\label{*lemdfb}
    Suppose that   $0< \aaa\leq 1$,   $F\subset\R^p$ is compact, $E\subset\R^p$ is open or closed, and $\cau\subset C_1^\alpha(F)$ is open.
    If $\{f_1,f_2,\ldots\}$ is a countable dense subset of $\cau$, then there is a dense $G_\delta$ subset $\cag$ of $\cau$ such that 
    \begin{equation}\label{eqdfb}
    \sup_{f\in\cag} D_*^f(F\cap E) \le \sup_{k\in\N} D_*^{f_k}(F\cap E).
    \end{equation}
    The statement is also valid with $D_*^f, D_*^{f_k}$ being replaced by $D_{\underline{B}*}^f, D_{\underline{B}*}^{f_k}$.
\end{lemma}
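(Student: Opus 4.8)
\emph{Strategy.} The plan is to run a Baire category argument: writing $G:=F\cap E$ and $s^*:=\sup_{k\in\N}D_*^{f_k}(G)$, I would construct $\cag$ as a countable intersection of open dense subsets of $\cau$, each arranged so that its members inherit efficient coverings of their level sets from the nearby functions $f_j$; the aim is that $D_*^f(G)\le s^*$ for every $f\in\cag$. Here $C_1^\alpha(F)$ is a complete metric space --- a uniform limit of $1$-H\"older-$\aaa$ functions is again $1$-H\"older-$\aaa$ --- so $\cau$ is a Baire space. I would first treat the case when $E$, hence $G$, is compact; the case $E$ open reduces to this by exhausting $E$ by an increasing sequence of closed sets and using countable stability of $\dim_H$ (for the lower box dimension statement this reduction needs a small extra argument near $\partial E$, which I omit). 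All measure statements are read with Lebesgue outer measure, so that measurability of $r\mapsto\dim_H(f^{-1}(r)\cap G)$ need not be addressed.

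\emph{The core construction.} Fix a rational $d>s^*$ and $\ell,m\in\N$, and fix $j\in\N$. Since $D_*^{f_j}(G)\le s^*$, the set $\{r:D^{f_j}(r,G)\ge d\}$ is null, so together with a small neighbourhood of $\{\min f_j,\max f_j\}$ it can be covered by an open set $B_{j,d,\ell}\subseteq\R$ with $\lambda(B_{j,d,\ell})<\tfrac{1}{2\ell}$. For every $r$ in the compact set $I_j\setminus B_{j,d,\ell}$, where $I_j:=[\min f_j,\max f_j]$, we have $D^{f_j}(r,G)<d$, hence $\mathcal H^d(f_j^{-1}(r)\cap G)=0$; since $f_j^{-1}(r)\cap G$ is compact it is covered by finitely many open sets $U_1^{(r)},\dots,U_{p_r}^{(r)}$ of diameter $<2^{-m}$ with $\sum_i|U_i^{(r)}|^d<2^{-m}$, and as $G\setminus\bigcup_iU_i^{(r)}$ is compact and $f_j$ maps it to a compact set avoiding $r$, this cover also contains $f_j^{-1}(r')\cap G$ for all $r'$ in an interval $(r-\delta_r,r+\delta_r)$. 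I would extract a finite subcover of $I_j\setminus B_{j,d,\ell}$ by intervals $(r_s-\tfrac13\delta_{r_s},r_s+\tfrac13\delta_{r_s})$, $s=1,\dots,q$, put $\delta^*:=\min\bigl(\tfrac14\min_s\delta_{r_s},\tfrac{1}{8\ell}\bigr)$, and set $\mathcal O_{j,d,\ell,m}:=\{g\in\cau:\|g-f_j\|_\infty<\delta^*\}$, an open neighbourhood of $f_j$ in $\cau$. The decisive elementary step is a triangle inequality: if $g\in\mathcal O_{j,d,\ell,m}$ and $x\in g^{-1}(r')\cap G$ with $r'$ in the subcover interval around $r_s$, then
\[
|f_j(x)-r_s|\ \le\ \|g-f_j\|_\infty+|g(x)-r_s|\ <\ \delta^*+\tfrac13\delta_{r_s}\ <\ \delta_{r_s},
\]
so $x$ lies in the cover attached to $r_s$; hence $g^{-1}(r')\cap G$ lies in that cover, and therefore $\lambda\{r':\mathcal H^d_{2^{-m}}(g^{-1}(r')\cap G)\ge 2^{-m}\}<\tfrac{1}{\ell}$ for every $g\in\mathcal O_{j,d,\ell,m}$. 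Now $\mathcal G_{d,\ell,m}:=\bigcup_{j\in\N}\mathcal O_{j,d,\ell,m}$ is open in $\cau$ and contains the dense set $\{f_j\}$, hence is dense, and $\cag:=\bigcap_{d\in\mathbb Q,\,d>s^*}\bigcap_{\ell,m\in\N}\mathcal G_{d,\ell,m}$ is a dense $G_\delta$ subset of $\cau$ by Baire's theorem. For $f\in\cag$ and rational $d_1>s^*$: for each $m$ one has $f\in\mathcal O_{j,d_1,\ell,m}$ for some $j=j(\ell,m)$, so letting $\ell\to\infty$ gives $\mathcal H^{d_1}_{2^{-m}}(f^{-1}(r)\cap G)<2^{-m}$ for all $r$ off a null set $Z_m$; for $r\notin\bigcup_mZ_m$ this holds at every dyadic scale, and since $\mathcal H^{d_1}_{2^{-m}}(\cdot)\uparrow\mathcal H^{d_1}(\cdot)$ while an increasing sequence whose $m$-th term is $<2^{-m}$ vanishes identically, we get $\mathcal H^{d_1}(f^{-1}(r)\cap G)=0$. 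Given any real $d>s^*$, choosing a rational $d_1\in(s^*,d)$ shows $\{r:D^f(r,G)\ge d\}$ is contained in the null set obtained for $d_1$; hence $D_*^f(G)\le s^*$, which is the first assertion.

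\emph{The lower box dimension variant.} The skeleton is unchanged; only the local certificate of thinness differs. Put $s^*_B:=\sup_{k\in\N}D_{\underline{B}*}^{f_k}(G)$. For $r$ outside the exceptional set we now know only $\underline{\dim}_B(f_j^{-1}(r)\cap G)<d$, which supplies a good cover not at the prescribed scale but at \emph{some} scale $2^{-m(r)}$ with $m(r)\ge m_0$, $m_0$ now playing the role of $m$. Covering $f_j^{-1}(r)\cap G$ by fewer than $2^{m(r)d}$ balls of radius $2^{-m(r)}$ and fattening exactly as above, one obtains open sets $\mathcal O^B_{j,d,\ell,m_0}\ni f_j$ such that for $g\in\mathcal O^B_{j,d,\ell,m_0}$ and all $r'$ off a set of measure $<\tfrac{1}{\ell}$ there is some $m\ge m_0$ with fewer than $2^{md}$ balls of radius $2^{-m}$ covering $g^{-1}(r')\cap G$. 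Intersecting $\bigcup_j\mathcal O^B_{j,d,\ell,m_0}$ over $d\in\mathbb Q\cap(s^*_B,\infty)$ and $\ell,m_0\in\N$ and running the same argument, for $f$ in the resulting dense $G_\delta$ and a.e.\ $r$ one finds, for every $m_0$, some $m\ge m_0$ witnessing a box count below $2^{md_1}$ at scale $2^{-m}$; as $\underline{\dim}_B$ is a $\liminf$ over scales this gives $\underline{\dim}_B(f^{-1}(r)\cap G)\le d_1$, whence $D_{\underline{B}*}^f(G)\le s^*_B$.

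\emph{The main obstacle.} I expect the genuine difficulty to be the tension that defeats a naive ``perturb $f_j$ slightly'' approach: a covering estimate at a single scale does not bound Hausdorff (or box) dimension, yet demanding the estimate at all scales simultaneously seems to force $g$ arbitrarily close to $f_j$, which would collapse $\cag$ to a countable set. What breaks the deadlock is that the scale index $m$ (resp.\ $m_0$) is quantified \emph{outside} the union over $j$: a function $f\in\cag$ is allowed to borrow its scale-$2^{-m}$ covering from a different $f_{j(m)}$ for each $m$, and this is exactly enough, since $\mathcal H^{d}$ is the increasing limit of its dyadic truncations and $\underline{\dim}_B$ is a $\liminf$. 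The remaining ingredients --- completeness of $C_1^\alpha(F)$, the finite-subcover and triangle-inequality bookkeeping, and the avoidance of measurability issues by working with outer measure --- are routine.
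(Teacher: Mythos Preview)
The paper does not actually prove this lemma; it merely quotes it, attributing the Hausdorff case to \cite[Lemma~7.1]{sier} and the lower box case to \cite[Lemma~3.1]{buczolich2023box}. Your Baire-category scheme --- producing, for each scale $2^{-m}$ and each accuracy $1/\ell$, an open dense set of functions that borrow an efficient level-set cover from a nearby $f_j$, and then intersecting over $d,\ell,m$ --- is exactly the expected argument and is carried out correctly for compact $G=F\cap E$. The crucial observation you isolate (that $f\in\cag$ may borrow from a \emph{different} $f_{j(m)}$ at each scale, which suffices because $\mathcal H^d=\sup_m\mathcal H^d_{2^{-m}}$ and $\underline{\dim}_B$ is a $\liminf$) is precisely the point of the proof. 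Your reduction of the open-$E$ Hausdorff case to the closed case via countable stability of $\dim_H$ is also fine.

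The one place where the proposal is genuinely incomplete is the case you yourself flag: lower box dimension with $E$ open. This is not a cosmetic omission. Your perturbation step needs a uniform lower bound on $|f_j(x)-r|$ for $x\in G\setminus\bigcup_i B_i$, which you obtain from compactness of that set; when $E$ is open this set need not be compact, and a sequence $x_n\in G$ with $f_j(x_n)\to r$ can escape to a point of $F\cap\partial E$ where $f_j$ equals $r$ but which is not in $\overline{f_j^{-1}(r)\cap G}$ (think of $F=[0,1]$, $E=(0,1)$, $f_j(t)=t$, $r=0$: the level set in $G$ is empty, yet nearby $g$ have nonempty level sets near $0$). Countable stability fails for $\underline{\dim}_B$, so your Hausdorff reduction does not transfer. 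One way to close the gap is to enlarge the ball cover to also absorb $f_j^{-1}(r)\cap F\cap\partial E$, but then you must control the box count of this boundary piece; an alternative is to run the whole argument with $\overline G$ in place of $G$ (using $\underline{\dim}_B(A)=\underline{\dim}_B(\overline A)$) while separately checking that passing to the closure does not increase $\sup_k D_{\underline B*}^{f_k}$ --- which is not automatic and requires an extra argument. Since the paper only \emph{uses} the lemma with $E=\R^p$, this gap is immaterial for the applications here, but the lemma as stated is not fully established by your write-up.
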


We are going to use the mass distribution principle recurringly.

%%%%%%%%%%%%%%%%%%%%%%%%%%%%%%%%
%%%%%%%%%%%%%%%%%%%%%%%%%%%%%%%%
%%%%%%%%%%%%%%%%%%%%%%%%%%%%%%%%
%%%%%%%%%%%%%%%%%%%%%%%%%%%%%%%%

\begin{theorem}\label{thMDP}
    Let $\mu$ be a mass distribution (measure) on $F\subset \R^p$. 
    Suppose that for some 
    $s\geq 0$ there are numbers $c>0$ and $\ddd>0$ such that $\ds \mmm(U)\leq c|U|^{s}$
    for all sets $U$ with $|U|\leq \ddd$. 
    Then $\cah^{s}(F)\geq \mu(F)/c$ and $s\leq \dim F.$
\end{theorem}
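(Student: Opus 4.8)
The statement to prove is the Mass Distribution Principle (Theorem~\ref{thMDP}). The plan is to bound the Hausdorff measure $\cah^s(F)$ from below using the measure $\mu$, then conclude via the definition of Hausdorff dimension.

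\begin{proof}[Proof proposal]
First I would take an arbitrary countable cover $\{U_i\}_{i=1}^\infty$ of $F$ by sets with $|U_i|\le \ddd$; we may assume every $U_i$ meets $F$, since discarding the others only decreases $\sum_i |U_i|^s$. The key step is the chain of inequalities
\[
0<\mu(F)\le \mu\Bigl(\bigcup_i U_i\Bigr)\le \sum_i \mu(U_i)\le c\sum_i |U_i|^s,
\]
where the first inequality uses that $\mu$ is a (nonzero) mass distribution on $F$, the second is countable subadditivity of the measure, and the third applies the hypothesis $\mu(U)\le c|U|^s$ to each $U_i$ (valid because $|U_i|\le\ddd$). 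Rearranging gives $\sum_i |U_i|^s \ge \mu(F)/c$. Since this holds for every such cover, taking the infimum over all $\ddd$-covers yields $\cah^s_\ddd(F)\ge \mu(F)/c$, and then letting the cover-diameter threshold decrease (the quantity $\cah^s_\delta$ is nondecreasing as $\delta\downarrow 0$) gives $\cah^s(F)=\lim_{\delta\to0+}\cah^s_\delta(F)\ge \mu(F)/c>0$.

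For the dimension conclusion, I would invoke the standard fact that if $\cah^s(F)>0$ then $\dim_H F \ge s$: indeed, by definition $\dim_H F = \sup\{t\ge 0 : \cah^t(F)=\infty\} = \inf\{t\ge 0 : \cah^t(F)=0\}$, and $\cah^s(F)>0$ precludes $s$ from lying strictly above this infimum, so $s\le \dim_H F$.

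There is essentially no serious obstacle here; this is a classical result and the proof is short. The only point requiring a little care is making sure the set $U$ in the hypothesis is allowed to be an arbitrary subset (so that the cover sets $U_i$ qualify) and that one reduces to covers meeting $F$ before applying the bound. One could alternatively cite this as standard (e.g. Falconer's book), but I would include the two-line argument above for completeness since the paper states it as a theorem it intends to use.
\end{proof}
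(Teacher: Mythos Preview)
Your proof is correct and is the standard textbook argument for the Mass Distribution Principle. The paper itself does not prove Theorem~\ref{thMDP}: it is merely stated in the preliminaries section as a known tool (the classical result from, e.g., Falconer's book) and then applied later. So there is no ``paper's own proof'' to compare against; your two-line argument is precisely what one would supply if asked to fill in the details.
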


We will also use the following approximation result (\cite[Lemma~4.2]{sier}):

%%%%%%%%%%%%%%%%%%%%%%%%%%%%%%%%
%%%%%%%%%%%%%%%%%%%%%%%%%%%%%%%%
%%%%%%%%%%%%%%%%%%%%%%%%%%%%%%%%

\begin{lemma} \label{lipschitzapprox}
Assume that $F$ is compact and $c > 0$ is fixed. 
Then the Lipschitz $c$-Hölder-$\alpha$ functions defined on $F$ form a dense subset $C_c^{\alpha}(F)$.
\end{lemma}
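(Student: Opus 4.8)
The plan is to approximate a given $f\in C_c^\alpha(F)$, to within a prescribed $\varepsilon>0$ in sup norm, by a Lipschitz function that still belongs to $C_c^\alpha(F)$; I would obtain it as an infimal convolution of $f$ with a carefully chosen kernel. The point is that the two naive kernels both fail: convolving with $t\mapsto nt$ keeps the result Lipschitz but spoils the $c$-Hölder-$\alpha$ bound at large scales, whereas convolving with $t\mapsto ct^\alpha$ preserves the $c$-Hölder bound (it merely reproduces $f$) but is not Lipschitz near $0$. So the heart of the matter is to build a single kernel $\psi$ that is simultaneously globally Lipschitz and dominated by $ct^\alpha$, with $ct^\alpha-\psi$ uniformly small.

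Concretely, for small $\delta>0$ I would take $\psi_\delta\colon[0,\infty)\to[0,\infty)$ equal to $ct^\alpha$ on $[\delta,\infty)$ and equal to the chord $c\delta^{\alpha-1}t$ from $(0,0)$ to $(\delta,c\delta^\alpha)$ on $[0,\delta]$. This $\psi_\delta$ is nondecreasing and concave with $\psi_\delta(0)=0$ — hence subadditive on $[0,\infty)$ — is Lipschitz with constant $c\delta^{\alpha-1}$, satisfies $0\le\psi_\delta(t)\le ct^\alpha$ everywhere, and obeys $\sup_{t\ge0}\bigl(ct^\alpha-\psi_\delta(t)\bigr)\le c\delta^\alpha$ (an elementary one-variable check). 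I would then set $g_\delta(x):=\inf_{y\in F}\bigl(f(y)+\psi_\delta(\mathpzc{d}(x,y))\bigr)$, which is finite because $f$ is bounded and $\psi_\delta\ge0$. Plugging $y=x$ gives $g_\delta\le f$, and since $f(y)+\psi_\delta(\mathpzc{d}(x,y))\ge f(x)-\bigl(c\mathpzc{d}(x,y)^\alpha-\psi_\delta(\mathpzc{d}(x,y))\bigr)\ge f(x)-c\delta^\alpha$ one gets $g_\delta\ge f-c\delta^\alpha$; thus $\|f-g_\delta\|_\infty\le c\delta^\alpha$.

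For the regularity of $g_\delta$ I would use, for all $a,b,y\in F$, the triangle inequality $\mathpzc{d}(a,y)\le\mathpzc{d}(b,y)+\mathpzc{d}(a,b)$ together with monotonicity of $\psi_\delta$, followed either by subadditivity of $\psi_\delta$ and $\psi_\delta(\mathpzc{d}(a,b))\le c\mathpzc{d}(a,b)^\alpha$, or by the Lipschitz estimate for $\psi_\delta$, to get
\[
\psi_\delta(\mathpzc{d}(a,y))\le\psi_\delta(\mathpzc{d}(b,y))+c\mathpzc{d}(a,b)^\alpha
\qquad\text{and}\qquad
\psi_\delta(\mathpzc{d}(a,y))\le\psi_\delta(\mathpzc{d}(b,y))+c\delta^{\alpha-1}\mathpzc{d}(a,b).
\]
Adding $f(y)$, taking the infimum over $y$, and exchanging the roles of $a$ and $b$ then yields $|g_\delta(a)-g_\delta(b)|\le c\mathpzc{d}(a,b)^\alpha$ and $|g_\delta(a)-g_\delta(b)|\le c\delta^{\alpha-1}\mathpzc{d}(a,b)$, i.e.\ $g_\delta\in C_c^\alpha(F)$ and $g_\delta$ is Lipschitz; taking $\delta$ with $c\delta^\alpha\le\varepsilon$ finishes the proof. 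The only genuinely non-routine ingredient is the choice of $\psi_\delta$: once the list of properties it must satisfy is isolated (nondecreasing, concave, vanishing at $0$, bounded slope, $\le ct^\alpha$, with $ct^\alpha-\psi_\delta$ uniformly small), the chord-then-power shape is essentially forced, and everything after that is standard infimal-convolution bookkeeping. (If one prefers to exploit $F\subset\R^p$, an alternative is to McShane-extend $f$ to $\R^p$ via $\tilde f(x)=\inf_{y\in F}(f(y)+c|x-y|^\alpha)$, which keeps the $c$-Hölder-$\alpha$ constant since $t\mapsto ct^\alpha$ is subadditive, then mollify and restrict back to $F$; but the argument above needs no ambient structure.)
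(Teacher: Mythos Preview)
Your proof is correct. The infimal-convolution construction with the chord-then-power kernel $\psi_\delta$ does exactly what is needed: concavity with $\psi_\delta(0)=0$ gives subadditivity, which together with $\psi_\delta\le ct^\alpha$ forces $g_\delta\in C_c^\alpha(F)$ with the \emph{exact} constant $c$, while the linear piece on $[0,\delta]$ makes $g_\delta$ Lipschitz. The uniform bound $\sup_t(ct^\alpha-\psi_\delta(t))\le c\delta^\alpha$ is immediate since the difference vanishes outside $[0,\delta]$ and is bounded by $ct^\alpha\le c\delta^\alpha$ there. Compactness of $F$ is only used to guarantee that $f$ is bounded, so the infimum is finite.

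The paper does not prove this lemma; it simply imports it as \cite[Lemma~4.2]{sier}. The argument there is essentially your alternative suggestion: extend the $c$-H\"older-$\alpha$ function to all of $\mathbb{R}^p$ without increasing the H\"older constant, convolve with a smooth bump, and restrict back (convolution preserves the H\"older constant and produces a smooth, hence Lipschitz, function on a neighbourhood of the compact set $F$). Your primary argument has the modest advantage of being intrinsic---it needs no ambient Euclidean structure or mollifier machinery---and makes transparent why the H\"older constant $c$ is preserved exactly, which is the one delicate point in the statement.
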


%%%%%%%%%%%%%%%%%%%%%%%%%%%%%%%%
%%%%%%%%%%%%%%%%%%%%%%%%%%%%%%%%
%%%%%%%%%%%%%%%%%%%%%%%%%%%%%%%%
%%%%%%%%%%%%%%%%%%%%%%%%%%%%%%%%
%%%%%%%%%%%%%%%%%%%%%%%%%%%%%%%%
%%%%%%%%%%%%%%%%%%%%%%%%%%%%%%%% 

\section{Improved lower estimate for the Sierpiński triangle} \label{sec:sier_triangle_lower}

By its definition the Sierpi\'nski triangle, $\Delta=\bigcap_{n=0}^{\infty}\Delta_n$ where $\Delta_n$ 
is the union of the triangles appearing at the $n$th step of the construction. 
The set of triangles on the $n$th level is $\tau_n$, and $\tau:=\bigcup_{n\in\N\cup\{0\}} \tau_n$.
If $T\in\tau_n$ for some $n$ then we denote by $V(T)$ the set of its vertices. 

In \cite[Section~3]{sierc}, we obtained a lower estimate on $D_*(\alpha, \Delta)$ by defining a function $\kappa: \tau\to (0,1]$,
which we called conductivity, and which interacted well with level sets. The intuition is that if a level set does not intersect 
triangles of high conductivity, then it crosses $\Delta$ through badly conducting triangles. Hence it must intersect many triangles, 
which yields that it has large Hausdorff dimension. The proof of the improved lower estimate presented below follows the same strategy,
however, with a more carefully defined conductivity function $\kappa$. This definition requires some additional notation.

If $n>0$ and $T\in\tau_n$, let $T^-\in\tau_{n-1}$ be such that $T\subset T^-$, and set $T_+:=\{T'\in\tau_{n+1} : T'\subset T\}$ (on the left half of Figure \ref{fig:sierttpl}, $T^{-}$
is the large triangle, $T$ is its lower left subtriangle and $T'$,  one of the elements
of $T_{+}$ is the upper subtriangle of $T$).
Moreover, let $V_{n}$ be the set of the points which are vertices of some $T\in\tau_{n}$, and their union is denoted by $V$. 
We are interested in the Hausdorff dimension of the level sets of a 1-H\"older-$\alpha$ function $f\colon\Delta\to\mathbb{R}$ for $0<\alpha\leq 1$.
We will show that for almost every $r\in f(\Delta)$ the level set $f^{-1}(r)$ intersects ''many'' triangles yielding a high Hausdorff-dimension for $f^{-1}(r)$.

\begin{figure}[h] 
    \centering
    \includegraphics[scale=0.5]{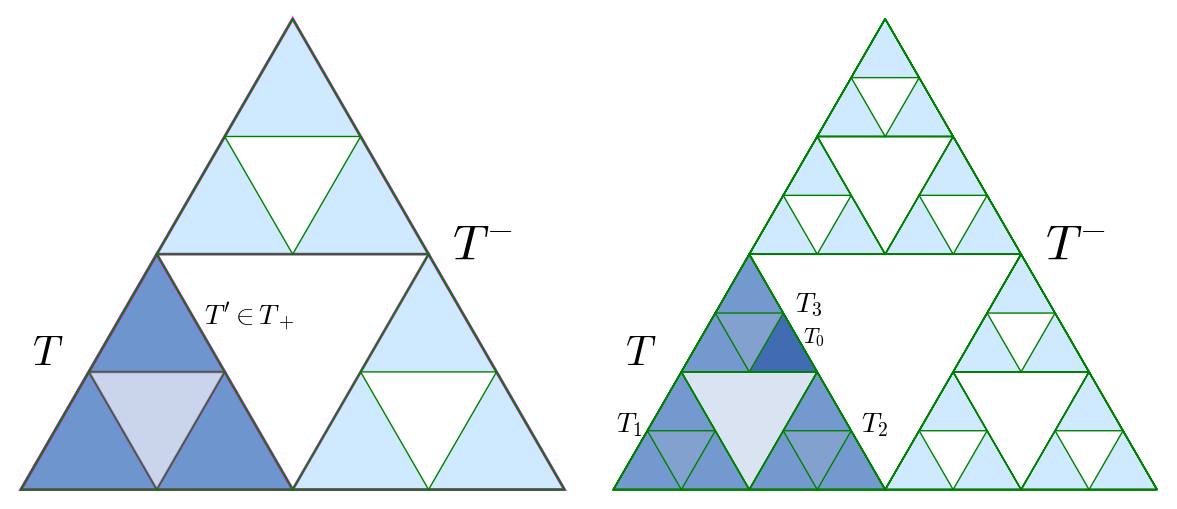} 
    \caption{Left: $T$, $T^{-}$, and the elements of $T_{+}$. Right: definition of $\cas_1,\cas_2,\ldots$. \label{fig:sierttpl}}
\end{figure}

We will define a function $\kappa: \tau\to (0,1]$ and subsets $\cas_1,\cas_2,\ldots$ of $\tau$ inductively.
%We define the conductivity $\kappa(T)=\kappa(T,f)$ of any triangle $T\in\tau_{n}$ inductively (as $f$ is fixed during most of our arguments, it will be omitted from the notation unless it might cause ambiguity).
If $T\in \tau_0\cup\tau_1$, we define $\kappa(T)=1$. 
Let $\cas_1 := \tau_1$.

Let $n\in\N$ and suppose that: 
\begin{itemize}
\item we have already defined $\cas_n$ and $\kappa|_{\cas^*_n}$ where 
$$
\cas_n^* := \{T\in\tau : \exists T'\in\cas_n \text{ such that } T'\subset T\},
$$ 
\item $\Delta\subset\bigcup_{T\in\cas_n} T$,
\item the elements of $\cas_n$ are non-overlapping.
\end{itemize}
(Observe that these conditions are satisfied for $n=1$.)
Take a $T\in\cas_n$.
We denote the elements of $T_+$ by $T_1$, $T_2$ and $T_3$ such that $V(T^-)\cap V(T_1)\neq\emptyset$ (see the right half of Figure \ref{fig:sierttpl}).
 Set $\kappa(T_1):=\kappa(T)$ and $\kappa(T_2):=\kappa(T_3):=\frac12\kappa(T)$.
Let $T_1\in\cas_{n+1}$.
If $T_0\in (T_2)_+\cup(T_3)_+$, then set $\kappa(T_0) := \frac12\kappa(T)$ and let $T_0\in\cas_{n+1}$
(for example on  the right half of Figure \ref{fig:sierttpl} an element of $(T_3)_+$
is shown as $T_0$ and being shaded darker than the rest).
Observe that the induction hypothesis is satisfied for $n+1$ too.

Suppose that $f:\Delta\to\mathbb{R}$ is a 1-H\"older-$\alpha$ function
and $r\notin f(V)$.
 We can define the $n$th approximation of $f^{-1}(r)$ denoted by $G_{n}(r)$ for any $n$ and $r\in f(\Delta)$ as the union of some triangles in $\tau_{n}$. 
More explicitly, $T\in\tau_{n}$ is taken into $G_{n}(r)$ if and only if $T$ has vertices $v$ and $v'$ such that $f(v)< r < f(v')$, that is, $r$ is in the 
interior of the convex hull $\conv (f(V(T)))$. 
The idea is that in this case $f^{-1}(r)$ necessarily intersects $T$.
On Figure \ref{fig:siertpllev} the level set corresponding to $f^{-1}(r)$ is the intersection of
the red curve with $\DDD$. 
On the left and right side of the figure the darkest shaded triangles correspond to
$G_{n}(r)$ and $G_{n+1}(r)$ for a suitably chosen $n$ depending on the choice of the level of the large triangle $T^{-}$.
Now it is easy to check that 
\begin{equation}
 \label{approxlevelunion} \conv (f(V(T)))\subseteq\bigcup_{T'\in T_+}\conv (f(V(T'))),
\end{equation}
hence if $G_{n}(r)$ contains a triangle $T\in\tau_{n}$ then $G_{n+1}(r)$ contains a triangle $T'\in T_+$. 
We introduce the following terminology: we say that $T'\in\tau_{n+k}$ 
is the $r$-descendant of $T\subseteq G_{n}(r)$ if there exists a sequence $T_0=T\supseteq T_1\supseteq\ldots\supseteq T_k=T'$ of triangles such that $T_i\in\tau_{n+i}$ and $T_i\subseteq G_{n+i}(r)$ for $i=0,1,\ldots,k$. 
We denote the set of $r$-descendants of $T$ by $\mathcal{D}_r(T)$.

%On the other hand, if $n\geq{1}$, there is a unique triangle $T'\in\cas_{n-1}$ such that $T\subseteq T'$. 

We need two lemmas:

%%%%%%%%%%%%%%%%%%%%%%%%%%%%%%%%
%%%%%%%%%%%%%%%%%%%%%%%%%%%%%%%%
%%%%%%%%%%%%%%%%%%%%%%%%%%%%%%%%

\begin{lemma}\label{cah_n cap tau_m}
Assume that $T\in \cas_n\cap\tau_m$ and $m,n\in\N$.
Then we have
\begin{displaymath}
\kappa(T) = 2^{n-m}.
\end{displaymath}
\end{lemma}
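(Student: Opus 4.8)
The plan is to prove this by induction on $n$, following the inductive construction of the sets $\cas_n$ and the function $\kappa$. The base case $n=1$ is immediate: if $T\in\cas_1\cap\tau_m$, then since $\cas_1=\tau_1$ we must have $m=1$, and $\kappa(T)=1=2^{1-1}=2^{n-m}$, as required.

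For the inductive step, I would assume the claim holds for $n$ and take $T\in\cas_{n+1}\cap\tau_m$. By the construction, every element of $\cas_{n+1}$ arises from some $S\in\cas_n$ in exactly one of two ways: either $T=S_1$ is the distinguished child of $S$ (the one meeting $V(S^-)$), in which case $\kappa(T)=\kappa(S)$ and, if $S\in\tau_{m-1}$, then by the inductive hypothesis $\kappa(S)=2^{n-(m-1)}=2^{(n+1)-m}$; or $T\in(S_2)_+\cup(S_3)_+$, i.e. $T$ is a grandchild of $S$ sitting under one of the two non-distinguished children $S_2,S_3\in\tau_{?}$. In this second case $S\in\tau_{m-2}$, we have $\kappa(T)=\tfrac12\kappa(S)$, and the inductive hypothesis gives $\kappa(S)=2^{n-(m-2)}$, so $\kappa(T)=\tfrac12\cdot 2^{n-m+2}=2^{(n+1)-m}$. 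Either way the formula $\kappa(T)=2^{(n+1)-m}$ holds, completing the induction.

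The main point requiring care — and the only place where something could go wrong — is verifying that these two cases genuinely exhaust $\cas_{n+1}$ and are consistent: one must check that no $T$ is assigned a value of $\kappa$ twice with conflicting values, and that $\kappa|_{\cas_{n+1}^*}$ is well-defined. This follows from the non-overlapping property of $\cas_n$ (part of the maintained induction hypothesis), which guarantees that the parent $S\in\cas_n$ with $T\subset S$ is unique, together with the observation that within a fixed $S$ the three roles $T_1$ versus elements of $(T_2)_+\cup(T_3)_+$ are disjoint. I would also remark that the level indices are forced: the distinguished child lies one level below $S$ and the grandchildren lie two levels below, so the exponent bookkeeping $n-m$ is automatically respected. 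Beyond this bookkeeping the proof is entirely routine, so I would keep the writeup short, emphasizing the case split and invoking the non-overlapping hypothesis for well-definedness.
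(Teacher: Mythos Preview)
Your proposal is correct and is essentially the same induction-on-$n$ argument as in the paper: the paper also splits into the two cases according to whether $T^-\in\cas_{n-1}$ (your ``distinguished child'' case) or $(T^-)^-\in\cas_{n-1}$ (your ``grandchild'' case), and the arithmetic is identical. The paper's writeup is terser and omits the well-definedness remarks you include, but the logic is the same.
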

\begin{proof}
By construction, $n\leq m$.
Suppose $m\in\N$ is fixed. 
We prove by induction on $n$.
If $n=1$, this obviously holds.
Suppose that $n>1$, and we have proved the statement for $n-1$.
Take a $T\in \cas_n\cap\tau_m$. 

If $T^-\in\cas_{n-1}$, then by the definition of $\cas_n$ we have $\kappa(T)=\kappa(T^-)$, hence using the induction hypothesis we obtain
$$
\kappa(T) = \kappa(T^-) = 2^{n-1-(m-1)}=2^{n-m}.
$$

If $T^-\notin \cas_{n-1}$, then $(T^-)^-\in\cas_{n-1}$ and $\kappa(T)=\frac12\kappa((T^-)^-)$, hence using the induction hypothesis we obtain
$$
\kappa(T) = \frac12\kappa\left((T^-)^-\right) = \frac12\cdot2^{n-1-(m-2)} = 2^{n-m}.
$$
\end{proof}

%Now we have enough tools to turn our attention to the Hausdorff dimension of the level sets of a 1-H\"older-$\alpha$ function $f:\Delta\to\mathbb{R}$.

%%%%%%%%%%%%%%%%%%%%%%%%%%%%%%%%
%%%%%%%%%%%%%%%%%%%%%%%%%%%%%%%%
%%%%%%%%%%%%%%%%%%%%%%%%%%%%%%%%
%%%%%%%%%%%%%%%%%%%%%%%%%%%%%%%%

%%%%%%%%%%%%%%%%%%%%%%%%%%%%%%%%
%%%%%%%%%%%%%%%%%%%%%%%%%%%%%%%%
%%%%%%%%%%%%%%%%%%%%%%%%%%%%%%%%

\begin{lemma}\label{d_1 lemma}
If %Suppose that 
$f\colon\Delta\to\R$, $r\in\conv(f(V(\Delta_0)))\setminus f(V)$, $d_1>0$ and %. If 
\begin{equation}\label{kicsi cond}
\sup\{\kappa(T) : T\in\cas_{n} \text{ and } T\cap f^{-1}(r)\neq\emptyset\} < 2^{-nd_1} \text{ for every large enough $n\in\N$},
\end{equation}
%the elements of $\cas_n$ intersected by $f^{-1}(r)$ have conductivity at most $2^{-d_1n}$ 
then
\begin{displaymath}\label{d_1 becslese}
\dimh(f^{-1}(r))\geq
\frac{d_1}{1+d_1}.
\end{displaymath}
\begin{comment}
If 
\begin{equation}\label{kicsi cond 2}
\limsup_{n\to\infty}\sum_{\substack{T\in\cas_n, T\cap f^{-1}(r)\neq\emptyset \\ \kappa(T)\ge 2^{-nd_1}}} \kappa(T) < \frac12,
\end{equation}
then 
\begin{equation} \label{d_1 becslese 2}
\ldimb(f^{-1}(r))\geq
\frac{d_1}{1+d_1}.
\end{equation}
\end{comment}
\end{lemma}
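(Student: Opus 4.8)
The plan is to build a mass distribution on $f^{-1}(r)$ using the approximating triangles $G_n(r)$ and apply the mass distribution principle (Theorem~\ref{thMDP}). The key quantitative input is that condition \eqref{kicsi cond} forces the level set to avoid high-conductivity triangles, and by Lemma~\ref{cah_n cap tau_m} low conductivity of a triangle $T\in\cas_n\cap\tau_m$ means $m$ is large compared to $n$ — that is, the level set, when it crosses $\Delta$, must descend deep into the construction before re-entering $\cas$-triangles, hence it meets very many small triangles. Concretely, I would first fix a large $n_0$ beyond which \eqref{kicsi cond} holds and start the construction from the (finitely many, non-overlapping, covering) triangles of $\cas_{n_0}$ that meet $f^{-1}(r)$; by hypothesis each such $T\in\cas_{n_0}\cap\tau_m$ satisfies $\kappa(T)<2^{-n_0 d_1}$, so $m > n_0 d_1$ by Lemma~\ref{cah_n cap tau_m}.

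Next I would set up the recursion. Given a triangle $T\in\cas_n\cap\tau_m$ meeting $f^{-1}(r)$, the relation \eqref{approxlevelunion} guarantees that $T$ has at least one $r$-descendant at every subsequent level; following the $\cas$-construction, from $T$ one reaches the next generation of $\cas$-triangles inside $T$ after exactly one step (for the $T_1$-branch, which keeps conductivity $\kappa(T)$) or after one step plus descending all the way through $T_2$ or $T_3$ into $(T_2)_+\cup(T_3)_+$. The crucial point: whichever branch the descendant of $T$ follows, the next $\cas$-triangle $T'$ it lies in has $\kappa(T')\le\frac12\kappa(T)$ \emph{unless} it is the $T_1$-branch, and iterating the $T_1$-branch is impossible indefinitely because $\kappa$ would stay constant while \eqref{kicsi cond} demands it drop like $2^{-nd_1}$. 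So along the level set, to decrease $\kappa(T)$ by a factor $2$ one must pass from $\tau_m$ down to roughly $\tau_{m+2}$ (one level for $T_1$, then into $(T_2)_+$ or $(T_3)_+$), and to have $\kappa(T)<2^{-nd_1}$ with $T\in\tau_m$ we need $m\ge (1+d_1)n$ in the limit. I would use this to define a measure $\mu$ on $f^{-1(r)}$: spread mass equally among the $r$-descendants in $\cas$-triangles at each stage, so that a $\cas$-triangle $T\in\tau_m$ carrying the construction receives mass comparable to $\kappa(T)$ (this is exactly why $\kappa$ was designed as it is — it tracks the splitting of mass: $\kappa(T_1)=\kappa(T)$ since the $T_1$-branch does not split, $\kappa(T_2)=\kappa(T_3)=\tfrac12\kappa(T)$ since mass splits, matching \eqref{approxlevelunion}).

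Finally I would verify the Frostman/Hölder bound. A set $U$ of small diameter $|U|\approx 2^{-m}$ (the diameter scale of $\tau_m$) meets a bounded number of triangles in $\tau_m$, hence intersects at most boundedly many of the $\cas$-triangles carrying the construction at the relevant scale, so $\mu(U)\lesssim \max\kappa(T)$ over those $T\in\tau_m$. By the previous paragraph, for such $T$ meeting $f^{-1}(r)$ (hence of the form examined in \eqref{kicsi cond} at its $\cas$-level $n$ with $m\ge(1+d_1)n - o(n)$) we get $\kappa(T)=2^{n-m}\le 2^{-md_1/(1+d_1) + o(m)}$, i.e. $\mu(U)\lesssim |U|^{d_1/(1+d_1) - \varepsilon}$ for every $\varepsilon>0$ once $|U|$ is small enough. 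The mass distribution principle then yields $\dimh(f^{-1}(r))\ge \frac{d_1}{1+d_1}-\varepsilon$ for all $\varepsilon>0$, hence $\ge\frac{d_1}{1+d_1}$.

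The main obstacle I anticipate is the bookkeeping in the recursion: making precise how the $r$-descendant of a $\cas$-triangle is routed through the $T_1$-branch versus the $(T_2)_+\cup(T_3)_+$ branches, and showing that along \emph{any} admissible path the conductivity-to-depth ratio obeys $m\ge(1+d_1)n$ asymptotically — in particular ruling out long runs of the conductivity-preserving $T_1$-branch using \eqref{kicsi cond}, and handling the fact that a single application of $(T_2)_+$ or $(T_3)_+$ drops $m$ by two levels at once while halving $\kappa$ (so the exchange rate is genuinely one conductivity-halving per two depth-levels, which is what produces the exponent $\frac{d_1}{1+d_1}$ rather than something larger). Getting the constants and the $o(n)$ error terms to line up so that the mass distribution principle applies cleanly is the delicate part; the rest is routine.
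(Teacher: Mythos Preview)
Your plan matches the paper's: define a measure $\mu$ on $f^{-1}(r)$ by splitting mass equally among $r$-descendants level by level, show $\mu(T)\le\kappa(T)$, then use Lemma~\ref{cah_n cap tau_m} together with \eqref{kicsi cond} to deduce that any $T\in\cas_n\cap\tau_m$ meeting $f^{-1}(r)$ has $\kappa(T)=2^{n-m}<2^{-nd_1}$, whence $n\le m/(1+d_1)$ and $\kappa(T)\le 2^{-md_1/(1+d_1)+1}$ exactly (no $o(m)$ is needed, and no $\varepsilon$), and finish with the mass distribution principle. One minor point: the paper splits mass at each $\tau$-level, not at each $\cas$-level, so that the bound $\mu(T)\le\kappa(T)$ holds for \emph{all} $T\in\tau$; this is what you need, because a set $U$ with $|U|\approx 2^{-m}$ is covered by boundedly many triangles of $\tau_m\cup\tau_{m-1}$, and this is precisely $(\tau_m\cup\tau_{m-1})\cap\bigcup_n\cas_n$.

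The genuine gap is your justification of $\mu(T)\le\kappa(T)$. You write that $\kappa(T_2)=\kappa(T_3)=\tfrac12\kappa(T)$ holds ``since mass splits, matching \eqref{approxlevelunion}'', but \eqref{approxlevelunion} only supplies \emph{one} $r$-descendant, not two, and nothing you have said rules out the level set entering, say, only $T_2$: in that scenario $\mu(T_2)=\mu(T)$ against $\kappa(T_2)=\tfrac12\kappa(T)$, and the induction collapses. The paper closes this with a graph-connectivity argument (its Lemma~\ref{mu-lemma}): form the graph whose vertices and edges come from the triangles in $T_+\cup(T^-)_+$ (and the analogous two-level graph when $T^+\in(T_2)_+\cup(T_3)_+$), check that after deleting the three sides of the half-conductivity child $T^+$ the three vertices of $T^+$ remain connected in what is left, and conclude that some \emph{other} triangle at the $\tau$-level of $T^+$ or of $T$ must also be crossed by $f^{-1}(r)$. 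Either way the mass halves somewhere between $T^-$ and $T^+$, giving $\mu(T^+)\le\tfrac12\mu(T^-)\le\tfrac12\kappa(T^-)=\tfrac12\kappa(T)=\kappa(T^+)$ (the equality $\kappa(T^-)=\kappa(T)$ follows directly from the construction of $\kappa$). This connectivity step is the heart of the proof, not the bookkeeping you flag; the ``long runs of the $T_1$-branch'' you worry about are handled automatically by the algebra $2^{n-m}<2^{-nd_1}$.
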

\begin{proof}
%First we prove that \eqref{kicsi cond} implies \eqref{d_1 becslese}. 

Fix $f$ and $r$.
We would like to apply  the mass distribution principle, Theorem \ref{thMDP} to $f^{-1}(r)$, hence we define  a Borel probability measure $\mu$ on it. 
Due to Kolmogorov's extension theorem (see for example \cite{Oksendal}, \cite{Taomeas} or \cite{Lamperti})
it suffices to define consistently $\mu(T_r)$ for any $T\in\tau$ where $T_r:=T\cap f^{-1}(r)$. 
Set $\mu((\Delta_0)_r):=1$.
For descendants, we proceed by recursion. 
Notably, if $T$ is an $r$-descendant in $\tau_n$, and $\mu(T_r)$ is already defined, then for an $r$-descendant $T^*\in\tau_{n+1}$ of $T$ we define
\[\mu(T^*_r)=\frac{\mu(T_r)}{\#\{T'\in\tau_{n+1} : T'\text{ is an $r$-descendant of } T\} }.\]
We extend $\mu$ to all Borel subsets $B$ of $\R^2$ by setting $\mu(B):=\mu(B\cap f^{-1}(r))$.

%%%%%%%%%%%%%%%%%%%%%%%%%%%%%%%%
%%%%%%%%%%%%%%%%%%%%%%%%%%%%%%%%
%%%%%%%%%%%%%%%%%%%%%%%%%%%%%%%%

\begin{lemma}\label{mu-lemma}
If $r\in\conv(f(V(\Delta)))$ and $T\in\tau$, then
\begin{equation}\label{mu-lemma kiemelt}
\mu(T) \le \kappa(T).
\end{equation}
\end{lemma}

\begin{proof}Fix $r$.
If $T\in \cas_1^*$, then \eqref{mu-lemma kiemelt} holds obviously.

We proceed by induction.

Suppose that $n>2$ and \eqref{mu-lemma kiemelt} holds if $T\in \cas_{n-1}^*$.

Take a $T^+\in \cas^*_n\setminus \cas_{n-1}^*$ and let $T\in \cas_{n-1}$ be such that $T^+\subset T$.

\begin{figure}[h] 
    \centering
    \includegraphics[scale=0.5]{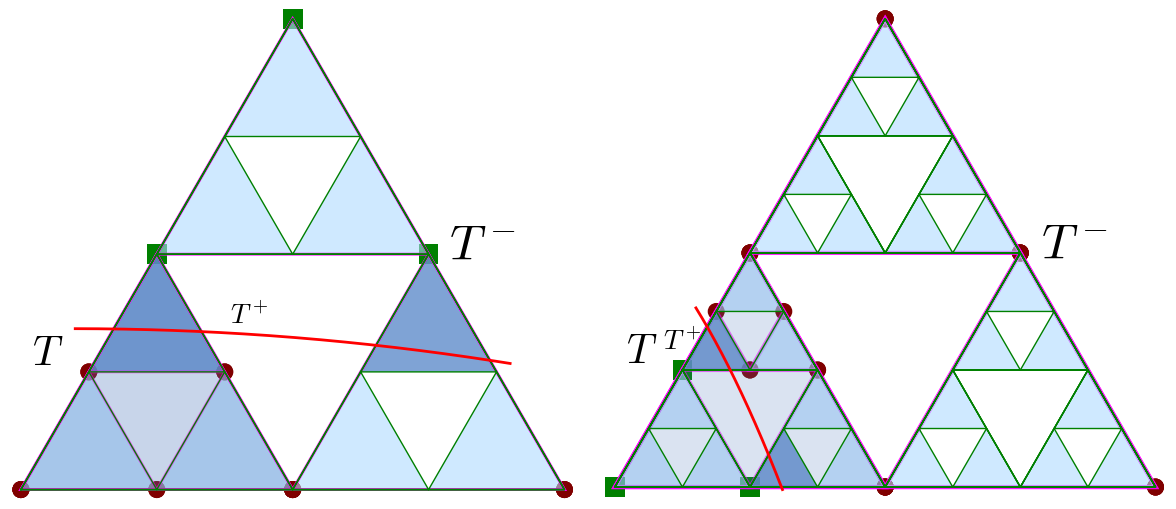} 
    \caption{ Level sets intersecting $T$ \label{fig:siertpllev}}
\end{figure}

If $\kappa(T)=\kappa(T^+)$, then $\mu(T^+)\le\mu(T)\le\kappa(T)=\kappa(T^+)$.

Suppose that $\kappa(T^+)=\frac12\kappa(T)$. In this case either $\diam(T^+) = \frac12\diam(T)$ and $T^+\notin\cas_n$, or $\diam(T^+) = \frac14\diam(T)$ and $T^+\in\cas_n$.
The first case is illustrated on the left side of Figure \ref{fig:siertpllev} while the second case on the right side. In the first case,
 on the left side of the figure at one vertex of $\ds T^{+}$, $f$ takes a value less than $r$. This vertex is marked by a green square. At two other vertices of $\ds T^{+}$, $f$ takes values larger than $r$. 
 These vertices are marked by red dots.
Now one can consider a graph where the vertices are the vertices of the triangles 
in $T_{+}$ and in $(T^{-})_{+}$. These are marked on the left side of Figure \ref{fig:siertpllev} by red dots or green rectangles according to whether $f$ is larger than or smaller than $r$ at these points. The edges of this graph are determined by the sides of the triangles in  $T_{+}$ and $(T^{-})_{+}$.
One can easily verify that if we remove the edges of $T^{+}$ from this graph then we can still connect in the graph one of the red dot vertices of $T^+$
with its green square vertex. This means that we can find an edge with
red dot and green square endpoints. This means that the corresponding triangle,
which is different from $T^{+}$ is also intersected by $f^{-1}(r)$.
Hence $\mu(T^+)\le\frac12\mu(T^-)$. In our example
on  the left side of Figure \ref{fig:siertpllev} the lower right triangle in $(T^{-})_{+}$ is the other triangle intersected by by $f^{-1}(r)$.

The case $\diam(T^+) = \frac14\diam(T)$ and $T^+\in\cas_n$ is illustrated on the right side of Figure \ref{fig:siertpllev}. In this case one is considering the graph determined by the vertices of $T^{+}$, and triangles in  $(T^{-})_{+}$,
$((T^{+})_{-})_{+}$ and
$(((T^{+})_{-})_{-})_{+}$. One can again observe that if we remove from this graph the edges of $T^{+}$ one can still connect any two vertices of $T^{+}$
in the leftover graph. Arguing as before we can again deduce that $\mu(T^+)\le\frac12\mu(T^-)$.

By the definition of $\kappa$, we have $\kappa(T)=\kappa(T^-)$.
Consequently, 
$$
\mu(T^+)\le\frac12\mu(T^-)\le\frac12\kappa(T^-)=\frac12\kappa(T)=\kappa(T^+).
$$
\end{proof}

%We would like to apply the mass distribution principle (Theorem \ref{thMDP}) to $\mu$.

Assume that $m\in\N$ and $U\subset f^{-1}(r)$ so that $2^{-m}\leq |U| \leq 2^{-(m-1)}$. 
By a simple geometric argument one can show that $U$ intersects at most $C$ triangles in $\tau_{m}\cup\tau_{m-1}$ for some constant $C$ which is
independent of $m$ and $U$. 
(One can consider the triangular lattice formed by triangles with side length $2^{-m}$ and it is easy to see that a set with diameter $2^{-(m-1)}$ can intersect only a limited number of the triangles.) 
Consequently, the number of $r$-descendants of $\Delta_0$ in $\tau_{m}\cup\tau_{m-1}$ intersected by $U$ is bounded by $C$.
Observe that $(\tau_{m}\cup\tau_{m-1})\cap\left(\bigcup_{n\in\N}\cas_n\right)$ covers $\Delta$.

Suppose that $m$ is large enough, $T\in (\tau_{m}\cup\tau_{m-1})\cap\cas_{n}$ for some $n\in\N$ and $T\cap f^{-1}(r)\neq\emptyset$. 
Then by Lemma \ref{cah_n cap tau_m} $\kappa(T)=2^{n-m} $, or
$\kappa(T)=2^{n-(m-1)} $.  
Using \eqref{kicsi cond} as well we obtain
\begin{align*}
2^{n-m} \le \kappa(T) &\le 2^{-d_1n} \\
n-m &\le -d_1 n \\
n(1+d_1) &\le m \\
n &\le \frac{m}{1+d_1}.
%\log(\kappa(T)) &\le -d_1n\log(2) \\
%n &\le \frac{-\log(\kappa(T))}{d_1\log2}  
%\le \frac{-\log(2^{n-m})}{d_1\log2}
\end{align*}
Hence, 
$$\kappa(T) \le 2^{n-m +1} \le 2^{\frac{m}{1+d_1}-m +1 } = 2^{(m-(m+md_1))/(1+d_1) +1} = 2^{-md_1/(1+d_1) +1}.
$$

Thus Lemma \ref{mu-lemma} implies $\mu(T)\le\kappa(T)\le 2^{-md_1/(1+d_1) +1}$.
We obtain that $\mu(U)\le C2^{-md_1/(1+d_1) +1}$.
As $|U|\geq{2^{-m}}$, the mass distribution principle, Theorem \ref{thMDP} tells us that if there exists $C',\ s>0$ independent of $m$ with
\[
C2^{-md_1/(1+d_1)+1}\leq \left(2^{-m}\right)^s C',
\]
then $s\leq \dim_H(f^{-1}(r))$. 
Such a $C'$ exists if and only if
\[s \le\frac{d_1}{1+d_1}.\]
%Consequently, $\frac{d_1}{1+d_1} - \varepsilon \leq \dim_H(f^{-1}(r))$ for arbitrary $\varepsilon > 0$, which immediately yields $\frac{d_1}{1+d_1} \leq \dim_H(f^{-1}(r))$.
\begin{comment}
Now, we prove that \eqref{kicsi cond 2} implies \eqref{d_1 becslese 2}. 
By \eqref{kicsi cond 2}, we can take an $n_0\in\N$ such that for every $n\ge n_0$ 
$$
\sum_{\substack{T\in\cas_n, T\cap f^{-1}(r)\neq\emptyset \\ \kappa(T)\ge 2^{-nd_1}}} \kappa(T) < \frac12-\eps.
$$
Thus, setting 
\begin{equation}\label{cat}
\cat := \left\{T\in\cas_n : T\cap f^{-1}(r)\neq\emptyset \text{ and } \kappa(T)\ge 2^{-nd_1}\right\}
\end{equation} 
we obtain that
$$\#\cat \ge \frac{\frac12}{2^{-nd_1}} = 2^{nd_1-1}.
$$
If $T\in\cat$, the paragraph containing \eqref{bekezdes} is true for $T$ if we write \eqref{cat} instead of \eqref{kicsi cond}.
\end{comment} 
\end{proof}

%%%%%%%%%%%%%%%%%%%%%%%%%%%%%%%%
%%%%%%%%%%%%%%%%%%%%%%%%%%%%%%%%
%%%%%%%%%%%%%%%%%%%%%%%%%%%%%%%%
%%%%%%%%%%%%%%%%%%%%%%%%%%%%%%%%

\begin{theorem}
\label{allexpthm}
Assume that $f:\Delta\to\mathbb{R}$ is a 1-H\"older-$\alpha$ function for some $0<\alpha\leq 1$. 
Then for Lebesgue almost every $r\in f(\Delta)$ we have
\begin{equation}\label{allexpthm_eq}
\dim_H(f^{-1}(r))\geq
\frac{h^{-1}(\alpha)}{1+h^{-1}(\alpha)}
\end{equation}
where $h(d)=\frac{-d\log d-(1-d)\log (1-d)+d\log 6}{(1+d)\log 2}$ and the domain of $h$ is $(0,\frac12]$.

Consequently, $D_{*}(\alpha, \Delta) \geq h^{-1}(\alpha)$.
\end{theorem}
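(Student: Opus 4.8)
The plan is to combine Lemma \ref{d_1 lemma} with a measure-theoretic argument on the level parameter $r$. The key point is to show that for almost every $r \in f(\Delta)$, the hypothesis \eqref{kicsi cond} of Lemma \ref{d_1 lemma} holds with $d_1$ equal to (or approaching) $h^{-1}(\alpha)$; then \eqref{allexpthm_eq} and the definition of $D_*$ follow. So the crux is: the set of ``bad'' levels $r$ for which there are triangles $T \in \cas_n$ with $\kappa(T)$ large (i.e. $\geq 2^{-nd_1}$) and $T \cap f^{-1}(r) \neq \emptyset$, occurring for infinitely many $n$, has Lebesgue measure zero in $f(\Delta)$.

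First I would fix $d_1 < h^{-1}(\alpha)$ and, for each $n$, collect the triangles $T \in \cas_n$ with $\kappa(T) \geq 2^{-nd_1}$. For such a $T$, Lemma \ref{cah_n cap tau_m} says $\kappa(T) = 2^{n-m}$ when $T \in \tau_m$, so $2^{n-m} \geq 2^{-nd_1}$, i.e. $m \leq n(1+d_1)$; thus these triangles live on levels at most $n(1+d_1)$, where they have diameter $\gtrsim 2^{-n(1+d_1)}$. Because $f$ is $1$-Hölder-$\alpha$, the oscillation of $f$ on such a $T$ is at most $(\mathrm{diam}\,T)^\alpha$, which is at least of order $2^{-n(1+d_1)\alpha}$ — wait, that is an upper bound on the oscillation, hence the set of $r$ with $f^{-1}(r) \cap T \neq \emptyset$ is contained in an interval of length $\leq (\mathrm{diam}\,T)^\alpha \leq (2^{-m})^\alpha$. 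Summing over all relevant $T$: the number of triangles $T \in \cas_n \cap \tau_m$ with $\kappa(T) = 2^{n-m}$ is controlled by the construction of $\cas_n$ — each splitting step multiplies counts by a bounded factor — and I would estimate it via a binomial/entropy count: roughly, going from level to level along the conductivity recursion, a triangle in $\cas_n$ lands in $\tau_m$ after $n$ ``conductivity steps'' distributed as either a single subdivision (keeping $\kappa$, the $T_1$ branch) or as passing through $T_2/T_3$ subtriangles (halving $\kappa$, costing two subdivisions); so the count of $T \in \cas_n \cap \tau_m$ is $\binom{m}{n}$-like, of size $\approx 2^{m H(n/m)}$ where $H$ is the binary entropy, times a factor like $6^{\,m-n}$ coming from the $((T_2)_+ \cup (T_3)_+)$ branching of degree $6$ per level over $m-n$ subdivisions. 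Hence the total Lebesgue measure covered by bad levels at stage $n$ is at most, up to constants,
\[
\sum_{m \leq n(1+d_1)} 2^{m H(n/m)} \, 6^{\,m-n} \, 2^{-m\alpha},
\]
and this is exactly where the function $h$ enters: writing $d = n/m$, the exponent is $m\big(H(d) + (1-d)\log_2 6 - \alpha\big)$, and $h(d) = \alpha$ is precisely the threshold making this exponent zero. For $d_1 < h^{-1}(\alpha)$ we have $n/m \geq n/(n(1+d_1)) = 1/(1+d_1) > 1/(1+h^{-1}(\alpha))$... I need the exponent negative, which holds when $h(n/m) < \alpha$, i.e. when $n/m$ is small enough, which is guaranteed since $m \leq n(1+d_1)$ forces $n/m \geq 1/(1+d_1)$ and $h$ is monotone on $(0,1/2]$ — so $h(n/m) \geq h(1/(1+d_1))$; choosing $d_1$ slightly below $h^{-1}(\alpha)$ makes $h(1/(1+d_1)) < \alpha$, giving a uniformly negative exponent and hence geometric decay in $n$.

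With geometric decay in $n$, the Borel–Cantelli lemma shows the set of $r$ lying in bad intervals for infinitely many $n$ has measure zero; for every other $r$ (and $r \notin f(V)$, a countable set) the condition \eqref{kicsi cond} holds with this $d_1$, so Lemma \ref{d_1 lemma} gives $\dim_H(f^{-1}(r)) \geq d_1/(1+d_1)$. Letting $d_1 \uparrow h^{-1}(\alpha)$ through a countable sequence yields \eqref{allexpthm_eq} for a.e. $r$. Finally, since $D_*^f(\Delta) = \sup\{d : \lambda\{r : D^f(r) \geq d\} > 0\}$ and \eqref{allexpthm_eq} holds for a.e. $r \in f(\Delta)$ (a set of positive measure, as $f$ is nonconstant generically, or trivially if $f$ is constant the statement is vacuous), we get $D_*^f(\Delta) \geq h^{-1}(\alpha)$ for every $1$-Hölder-$\alpha$ function $f$; in particular this holds on a dense $G_\delta$, so $D_*(\alpha,\Delta) \geq h^{-1}(\alpha)$.

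The main obstacle I anticipate is the combinatorial counting of $\#(\cas_n \cap \tau_m)$ together with pinning down the exact constant in the exponent so that the threshold function is precisely $h$ and not merely a function with the same asymptotics; getting the $d\log 6$ term right requires carefully tracking how the degree-$6$ branching in the $(T_2)_+\cup(T_3)_+$ part of the recursion interacts with the entropy term from choosing which branch is taken at each conductivity step. A secondary technical point is justifying uniformity — that the decay rate is independent of $f$ — which should follow since only the Hölder constant $1$ and the combinatorics of $\cas_n$, not $f$ itself, enter the estimate.
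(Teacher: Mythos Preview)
Your overall strategy---fix $d_1<h^{-1}(\alpha)$, bound the Lebesgue measure of ``bad'' levels at stage $n$ by summing the oscillations $|f(T)|\le(\diam T)^\alpha$ over triangles $T\in\cas_n$ with $\kappa(T)\ge 2^{-nd_1}$, prove geometric decay, apply Borel--Cantelli, then invoke Lemma~\ref{d_1 lemma}---is exactly the paper's. However, your combinatorial count is wrong in a way that breaks the identification with $h$.

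The error is this. In the recursion defining $\cas_{n+1}$ from $\cas_n$, each $T\in\cas_n$ produces one child with the same conductivity (one further subdivision) and six children with halved conductivity (two further subdivisions). Hence after $n$ recursion steps, a triangle in $\cas_n$ with conductivity $2^{-k}$ arises from choosing which $k$ of the $n$ steps were ``halving'' steps (each with $6$ options) and which $n-k$ were ``keeping'' steps; the count is $\binom{n}{k}6^{k}$, and such a triangle lies in $\tau_{n+k}$, i.e.\ $m=n+k$. You instead wrote the count as ``$\binom{m}{n}$-like, of size $\approx 2^{mH(n/m)}$''. Since $\binom{n}{m-n}\ne\binom{m}{n}$, this produces the wrong exponent: the correct asymptotic is $2^{nH((m-n)/n)}6^{m-n}2^{-m\alpha}$, and setting $d_1=(m-n)/n$ (not $d=n/m$) gives per-$n$ exponent
\[
-d_1\log d_1-(1-d_1)\log(1-d_1)+d_1\log 6-(1+d_1)\alpha\log 2,
\]
which is negative precisely when $h(d_1)<\alpha$. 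Your parametrization $d=n/m$ cannot recover the paper's $h$: your claimed threshold $H(d)+(1-d)\log_2 6=\alpha$ is a different function, and moreover $d=n/m\ge 1/(1+d_1)>2/3$ lies outside the domain $(0,\tfrac12]$ of $h$, so your sentence ``$h(1/(1+d_1))<\alpha$'' does not even typecheck. Once the count is corrected to $\binom{n}{k}6^k$ with $k\le nd_1$ and the Stirling estimate is applied in the variable $d_1=k/n$, the rest of your argument (summation, Borel--Cantelli, passage $d_1\uparrow h^{-1}(\alpha)$) goes through exactly as in the paper.
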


\begin{proof}[Proof of Theorem \ref{allexpthm}]
First we prove that $h^{-1}(\alpha)$ makes sense for every $\alpha\in(0,1]$.
We have
\begin{equation}
\begin{gathered}
h'(d)=\frac{(-\log d - 1 + \log(1-d) +1 +\log 6)(1+d)\log 2}{((1+d)\log 2)^2} \\
- \frac{(-d\log d-(1-d)\log (1-d)+d\log 6)\log2}{((1+d)\log 2)^2},
\end{gathered}
\end{equation}
that is
$$h'(d) = \frac{\log\bigg ( \frac{6}{d}(1-d)^2\bigg ) \log 2}{((1+d)\log 2)^2}.$$
It is immediate that the numerator is positive for $d\in(0,\frac12]$, hence $h'(d)>0$.
Thus $h$ is strictly increasing and it is invertible indeed.
As all terms of the numerator of $h(d)$ tend to $0$ as $d\to0+$, we have that $\lim_{d\to 0+} h(d) = 0$.
Moreover, 
\begin{equation}\label{h(1/2)>1}
h\Big(\frac12\Big)= \frac{-\frac12\log\frac12-\frac12\log\frac12+\frac12\log6}{3/2\cdot\log2} 
= \frac{\log2+\frac12\log6}{3/2\cdot\log2}
> 1,
\end{equation}
hence the domain of $h^{-1}$ contains $(0,1]$.

Since $\DDD$ is compact and connected, $f(\Delta)$ is a closed interval.

Observe that $\bigcup_{T\in\tau_n} \conv(f(V(T)))$ is also a closed interval for every $n\in\N$.
Moreover as $V$ is a countable and dense subset of $\DDD$,
 the set $f(V)$ is countable and dense in $f(\DDD)$.
Suppose that $r\in \inte(f(\DDD))\sm f(V)$.
Then we can find $T\in \tau$ 
 such that $r\in \inte \conv (f(V(T)))$. 
 Thus, it is enough to prove that \eqref{allexpthm_eq} holds for every $T\in \tau$ and for almost every $r\in \conv(f(V(T)))$.
 We will prove it for $T=\Delta_0$ (the other cases can be treated analogously).

By Lemma \ref{d_1 lemma}, it is enough to prove that \eqref{kicsi cond} holds for almost every $r\in\conv(f(V(\Delta_0)))$ and every $d_1 \in (0, h^{-1}(\alpha))$.
For every $n\in\N$ we have
%As $d_1$ is rational, $nd_1$ is an integer for infinitely many $n$. (From this point on we restrict our arguments to such $n$s.) 
\begin{equation}\label{6-os szamolas.0}
\begin{gathered}
L_{n,d_1,f} := \lambda\left(\left\{r\in\R : \exists T\in\cas_n \text{ for which } f^{-1}(r)\cap T \neq \emptyset \text{ and } \kappa(T)\ge 2^{-nd_1}\right\}  \right) \\
\le \sum_{\substack{T\in\cas_n, \\ \kappa(T)\ge 2^{-nd_1}}} |f(T)|.
\end{gathered}
\end{equation}
For every $n'\in\N$ and $T\in\cas_{n'}$ 
the set $\{T^+\in\cas_{n'+1} : T^+\subset T\}$ 
has $1$ element with conductivity $\kappa(T)$ and $6$ elements with conductivity $\frac12\kappa(T)$,
hence $\cas_n$ has $\binom nk 6^k$ elements with conductivity $2^{-k}$.
Moreover, if $T\in\cas_n$ and $\kappa(T)=2^{-k}$, then $T\in\tau_{n+k}$  by Lemma \ref{cah_n cap tau_m}, hence $|f(T)|\le |T|^\alpha=2^{-(n+k)\alpha}$.
Thus
\begin{equation}\label{6-os szamolas}
\begin{gathered}
L_{n,d_1,f}
\le \sum_{k=0}^{\lfloor nd_1 \rfloor} \binom{n}{k} 6^k \cdot 2^{-(n+k)\alpha}=: M_{n, d_1, \alpha}.
\end{gathered}
\end{equation}

If the series $\sum_{n}L_{n,d_1,f}$ is convergent, then we can apply the Borel--Cantelli lemma to 
deduce that \eqref{kicsi cond} holds for almost every $r\in\conv(f(V(\Delta_0)))$. This can be guaranteed by the convergence of $\sum_{n}M_{n, d_1, \alpha}$. In what
follows, we will precisely determine when this latter series is convergent in terms of $d_1, \alpha$.

Recall that from \eqref{h(1/2)>1} it follows that $d_1<h^{-1}(1)<\frac12$.
As $\binom{n}{k}\le\binom{n}{ \lfloor nd_1 \rfloor }$  and $6^k \cdot 2^{-(n+k)\alpha}$ is monotone increasing in $k$, we obtain
$$
\binom{n}{ \lfloor nd_1 \rfloor } 6^{nd_1} \cdot 2^{-(n+nd_1)\alpha} \le M_{n, d_1, \alpha}
\le (nd_1+1)\binom{n}{ \lfloor nd_1 \rfloor } 6^{nd_1} \cdot 2^{-(n+nd_1)\alpha}.
$$
If we take logarithm and divide by $n$, we find
$$\frac{\log M_{n, d_1, \alpha}}{n} = \frac{\log\binom{n}{\lfloor nd_1 \rfloor}}{n} + \frac{\lfloor nd_1 \rfloor}{n}\log 6 - \alpha\frac{n+\lfloor nd_1 \rfloor}{n}\log 2 + o(1)$$
The floor functions can be removed and the resulting error can be moved into the $o(1)$ term. Moreover, by Stirling's formula, $\log n! = n\log n - n + o(n)$, hence 
\begin{align*}
    \frac{\log\binom{n}{\lfloor nd_1 \rfloor}}{n} &= \Big (\log n - 1\Big ) - \Big (d_1\log(nd_1) - d_1\Big ) \\ &\quad -\Big ((1-d_1)\log(n(1-d_1)) - (1-d_1)\Big ) + o(1).
\end{align*}
%$$\frac{\log\binom{n}{\lfloor nd_1 \rfloor}}{n} $$ $$= \Big (\log n - 1\Big ) - \Big (d_1\log(nd_1) - d_1\Big ) - \Big ((1-d_1)\log(n(1-d_1)) - (1-d_1)\Big ) + o(1).$$
where the first three expressions in parenthesis correspond to the factorial terms coming from expanding the binomial coefficient. 
(The floor functions are removed at the price of an $o(1)$ error once again.)
This can be vastly simplified, as most of the terms cancel, and we eventually find
\begin{align*}
\frac{\log M_{n, d_1, \alpha}}{n}
&= -(d_1\log d_1 + (1-d_1)\log(1- d_1)) + d_1\log 6 - \alpha(1+d_1)\log 2 +o(1) \\
&=: c(d_1, \alpha) + o(1).
\end{align*}
Consequently,
$$\sum_{n}M_{n, d_1, \alpha} = \sum_{n} \left(e^{c(d_1, \alpha) + o(1)}\right)^n.$$
But this series converges if and only if $c(d_1, \alpha) < 0$, which can be rearranged to the form $h(d_1) < \alpha$, or equivalently, $d_1 < h^{-1}(\alpha)$. As noted previously, this
yields that \eqref{kicsi cond} indeed holds for almost every $r\in\conv(f(V(\Delta_0)))$ and every $d_1 \in (0, h^{-1}(\alpha))$.
\end{proof}

%%%%%%%%%%%%%%%%%%%%%%%%%%%%%%%%
%%%%%%%%%%%%%%%%%%%%%%%%%%%%%%%%
%%%%%%%%%%%%%%%%%%%%%%%%%%%%%%%%
%%%%%%%%%%%%%%%%%%%%%%%%%%%%%%%%
%%%%%%%%%%%%%%%%%%%%%%%%%%%%%%%%
%%%%%%%%%%%%%%%%%%%%%%%%%%%%%%%%
\begin{remark}
    A potential improvement of this method might come from a different conductivity scheme, which grasps better the geometry of the level sets. While we had some candidates using similar patterns,
    utilizing triangles from more levels, they seemed to yield minor improvements at best at the cost of a lot of technical subtleties, thus we opted to use this construction in the paper. 
    Probably one needs some genuine ideas to come up with a manageable scheme.

    Another improvement might come from the revision of the step where we pass to the series $\sum_{n}M_{n, d_1, \alpha}$, 
    as what we are truly after is the convergence of the series $\sum_{n}L_{n,d_1,f}$. It would be nice to better understand the relationship of these series, as that would probably give
    some insights on the geometry of the level sets.
\end{remark}

We conclude the section by verifying a stronger lower bound on $D_{\underline{B}*}(\alpha, \Delta)$. The proof relies on the observation that the condition of Lemma \ref{d_1 lemma} can be weakened
if we want to draw the same conclusion about the lower box dimension of the corresponding level sets instead of their Hausdorff dimension. Roughly speaking, in the Hausdorff case, in order to guarantee that
a level set lacks "economic" coverings and has a large dimension, upon going to deeper levels in the construction of $\Delta$ one has to require that eventually it does not intersect 
well-conducting triangles at all. However, to guarantee large lower box dimension, 
it suffices that the level set crosses many triangles eventually, which can be already achieved by having an upper bound on the number of intersected well-conducting triangles,
instead of completely ruling out such intersections. This upper bound is hidden in condition \eqref{kicsi cond 2} below.

\begin{lemma}\label{d_1 lemma_B}
If %Suppose that 
$f\colon\Delta\to\R$, $r\in\conv(f(V(\Delta_0)))\setminus f(V)$, $d_1>0$ and
\begin{equation}\label{kicsi cond 2}
\limsup_{n\to\infty}\sum_{\substack{T\in G_n(r) \\ \kappa(T)\ge 2^{-nd_1}}} \kappa(T) < \frac12,
\end{equation}
then 
\begin{equation} \label{d_1 becslese 2}
\ldimb(f^{-1}(r))\geq
d_1.
\end{equation}
\end{lemma}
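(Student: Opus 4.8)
The plan is to mimic the argument in Lemma \ref{d_1 lemma}, but to replace the mass distribution principle by a direct lower bound on the covering number of $f^{-1}(r)$ at scale $2^{-m}$. Recall that lower box dimension $\ldimb(f^{-1}(r)) \ge d_1$ follows if, for every $\eps>0$, the number $N(2^{-m})$ of sets of diameter $\le 2^{-m}$ needed to cover $f^{-1}(r)$ satisfies $N(2^{-m}) \ge 2^{m(d_1-\eps)}$ for all large $m$. So the core of the argument will be: given \eqref{kicsi cond 2}, produce, for each large $m$, at least roughly $2^{md_1}$ triangles in $\tau_m$ (or in $\tau_m\cup\tau_{m-1}$) that meet $f^{-1}(r)$; since each set of diameter $2^{-m}$ meets at most $C$ triangles of $\tau_m\cup\tau_{m-1}$ by the same geometric argument used in the previous proof, this forces $N(2^{-m}) \ge 2^{md_1}/C$, which is enough.

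First I would fix $f,r$ and an $\eps>0$ with $d_1+\eps < \tfrac12$, and pick $n_0$ so that $\sum_{T\in G_n(r),\ \kappa(T)\ge 2^{-nd_1}} \kappa(T) < \tfrac12$ for all $n\ge n_0$. Next I want to reintroduce the measure $\mu$ from the proof of Lemma \ref{d_1 lemma}, defined on $f^{-1}(r)$ by equidistributing among $r$-descendants, together with Lemma \ref{mu-lemma} which gives $\mu(T)\le\kappa(T)$ for all $T\in\tau$; crucially $\mu$ is a probability measure supported on $f^{-1}(r)$. Now fix $n\ge n_0$. The family $\{T\in G_n(r)\}$, or more precisely the $r$-descendants of $\Delta_0$ lying in $\tau_n$ and meeting $f^{-1}(r)$, carries all of the mass $\mu = 1$. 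Split this family into the "well-conducting" part $\cat_n := \{T\in G_n(r): \kappa(T)\ge 2^{-nd_1}\}$ and its complement. By \eqref{kicsi cond 2} and $\mu(T)\le\kappa(T)$, the $\mu$-mass carried by $\cat_n$ is at most $\sum_{T\in\cat_n}\kappa(T) < \tfrac12$, so the remaining family $\cat_n' := \{T\in G_n(r): \kappa(T) < 2^{-nd_1}\}$ carries $\mu$-mass $> \tfrac12$. But for each $T\in\cat_n'$ we have $\mu(T)\le\kappa(T) < 2^{-nd_1}$, so $\#\cat_n' > \tfrac12 \cdot 2^{nd_1} = 2^{nd_1-1}$; that is, at least $2^{nd_1-1}$ triangles of $\tau_n$ meet $f^{-1}(r)$.

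It then remains to convert this count at "construction level $n$" into a covering-number bound at metric scale $2^{-n}$. Here I would note that every $T\in\cat_n' \subseteq \tau_n$ has diameter (comparable to) $2^{-n}$, and a set $U$ with $|U|\le 2^{-n}$ can meet only boundedly many — say at most $C$ — triangles of $\tau_n$, by the triangular-lattice argument already invoked in the proof of Lemma \ref{d_1 lemma}. Consequently any cover of $f^{-1}(r)$ by sets of diameter $\le 2^{-n}$ has at least $\#\cat_n'/C > 2^{nd_1-1}/C$ members, so $N(2^{-n}) \ge 2^{nd_1-1}/C$ for every $n\ge n_0$. Taking $\liminf_{n\to\infty} \log N(2^{-n}) / (n\log 2)$ yields $\ldimb(f^{-1}(r)) \ge d_1$, which is \eqref{d_1 becslese 2}. (One should also remark that since covering numbers at scales between $2^{-n}$ and $2^{-(n+1)}$ are comparable up to a bounded factor, the $\liminf$ over the full continuum of scales is unaffected; this is the usual reduction of lower box dimension to dyadic scales.)

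I expect the main obstacle to be a bookkeeping subtlety rather than a conceptual one: making sure that the measure $\mu$ really is a probability measure that assigns to each $T\in G_n(r)$ exactly its $\mu$-mass (so that the masses over $\tau_n$-level $r$-descendants sum to $1$ and Lemma \ref{mu-lemma} applies verbatim), and that "$T\in G_n(r)$" and "$T$ is an $r$-descendant of $\Delta_0$ in $\tau_n$ meeting $f^{-1}(r)$" are being used consistently — in particular that the sum $\sum_{T\in\cat_n'}\mu(T)$ really does account for all mass not on $\cat_n$. A minor additional point is that \eqref{kicsi cond 2} is a $\limsup$ condition, so the bound $\#\cat_n' > 2^{nd_1-1}$ is only guaranteed for $n\ge n_0$, which is exactly what a $\liminf$-based lower box dimension estimate needs, so no strengthening is required.
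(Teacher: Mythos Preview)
Your proposal is correct and follows essentially the same approach as the paper. The paper's own proof is extremely terse: from \eqref{kicsi cond 2} it passes directly to $\sum_{T\in G_n(r),\ \kappa(T)\le 2^{-nd_1}}\kappa(T)>\tfrac12$ and hence $\#G_n(r)\ge 2^{nd_1-1}$, implicitly using that $\sum_{T\in G_n(r)}\kappa(T)\ge 1$; you make this step explicit by routing through the measure $\mu$ and Lemma~\ref{mu-lemma}, which is a clean way to justify it. Your covering-number argument at the end is likewise just an unpacking of the paper's one-line ``Thus $\ldimb f^{-1}(r)\ge d_1$''. (The choice of $\eps$ with $d_1+\eps<\tfrac12$ in your first paragraph is never used and can be dropped.)
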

\begin{proof}
By \eqref{kicsi cond 2}, we can take an $n_0\in\N$ such that for every $n\ge n_0$ 
$$
\sum_{\substack{T\in G_n(r) \\ \kappa(T)\le 2^{-nd_1}}} \kappa(T) > \frac12,
$$
hence $\# G_n(r) \ge 2^{nd_1-1}$. 
Thus, $\ldimb f^{-1}(r) \ge d_1$.
\end{proof}

Essentially, replacing Lemma \ref{d_1 lemma} by Lemma \ref{d_1 lemma_B} in the proof of Theorem \ref{allexpthm} immediately yields a stronger bound on $D_{\underline{B}*}(\alpha, \Delta)$. We present the
core parts of the argument below. (To visualize the improvement, we refer back to Figure \ref{fig:siertestimates}.)

\begin{theorem}
\label{allexpthm_lbdim}
Assume that $f:\Delta\to\mathbb{R}$ is a 1-H\"older-$\alpha$ function for some $0<\alpha\leq 1$. 
Then for Lebesgue almost every $r\in f(\Delta)$ we have
\begin{equation}\label{allexpthm_eq_bdim}
\ldimb(f^{-1}(r))\geq
h_B^{-1}(\alpha)
\end{equation}
where $h_B(d)=\frac{(1-d)\log (1-d)-d\log d-(1-2d)\log(1-2d)+d\log 3}{\log 2}$ and the domain of $h_B$ is $\left(0,\frac{1}{3}\right]$.

Consequently, $D_{\underline{B}*}(\alpha, \Delta) \geq h_B^{-1}(\alpha)$.
\end{theorem}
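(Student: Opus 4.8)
The plan is to run the proof of Theorem~\ref{allexpthm} almost verbatim, but with Lemma~\ref{d_1 lemma_B} in place of Lemma~\ref{d_1 lemma}; the weaker hypothesis \eqref{kicsi cond 2} is exactly what lets the exponent grow from $h^{-1}$ to $h_B^{-1}$. First I would check that $h_B^{-1}(\alpha)$ makes sense for $\alpha\in(0,1]$. A direct computation gives
\[
h_B'(d)=\frac{1}{\log 2}\log\frac{3(1-2d)^2}{d(1-d)},
\]
and on $(0,\tfrac13]$ one has $3(1-2d)^2>d(1-d)$: this holds with room to spare at $d=\tfrac13$, and as $d\downarrow 0$ the left side increases while the right side decreases. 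Hence $h_B$ is strictly increasing; moreover $\lim_{d\to0+}h_B(d)=0$ and $h_B(\tfrac13)=\tfrac23+\tfrac13\cdot\tfrac{\log 3}{\log 2}>1$, so the range of $h_B$ contains $(0,1]$. Exactly as in the proof of Theorem~\ref{allexpthm} — using that $f(\Delta)$ is a closed interval and $f(V)$ is countable and dense in it — it suffices to prove that for $T=\Delta_0$ (the remaining $T\in\tau$ being completely analogous) and for almost every $r\in\conv(f(V(\Delta_0)))$, condition \eqref{kicsi cond 2} holds for every $d_1\in(0,h_B^{-1}(\alpha))$; then Lemma~\ref{d_1 lemma_B} gives $\ldimb(f^{-1}(r))\ge d_1$ for almost every such $r$, and letting $d_1$ run up to $h_B^{-1}(\alpha)$ along a sequence yields \eqref{allexpthm_eq_bdim}. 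The closing assertion $D_{\underline B*}(\alpha,\Delta)\ge h_B^{-1}(\alpha)$ is then immediate and, unlike in the Hausdorff case, needs no extra ``boosting'': \eqref{allexpthm_eq_bdim} holds for \emph{every} $1$-Hölder-$\alpha$ function, hence for every $f$ in the dense $G_\delta$ set of non-constant ones, and each such $f$ has $\lambda(f(\Delta))>0$ and therefore $D_{\underline B*}^f(\Delta)\ge h_B^{-1}(\alpha)$.

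The core is a first-moment (``average over $r$'') estimate. Writing $S_n(r):=\sum_{T\in G_n(r),\,\kappa(T)\ge 2^{-nd_1}}\kappa(T)$, and using that $T\in G_n(r)$ precisely when $r$ lies in the interval $\inte\conv(f(V(T)))$, whose length is at most $|T|^\alpha=2^{-n\alpha}$ for $T\in\tau_n$, Tonelli's theorem gives
\[
\int_{\conv(f(V(\Delta_0)))}\!\! S_n(r)\,dr\le \sum_{\substack{T\in\tau_n\\ \kappa(T)\ge 2^{-nd_1}}}\kappa(T)\,\bigl|\conv(f(V(T)))\bigr|\le 2^{-n\alpha}\sum_{\substack{T\in\tau_n\\ \kappa(T)\ge 2^{-nd_1}}}\kappa(T).
\]
Now I would count, for each $k$, the triangles $T\in\tau_n$ with $\kappa(T)=2^{-k}$. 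One must first note that $\kappa$ is in fact defined on all of $\tau_n$, not only on the $\cas$-elements used in the Hausdorff argument: a level-$n$ triangle is either an element of some $\cas_m$ — and then, by Lemma~\ref{cah_n cap tau_m}, $\kappa(T)=2^{-k}$ forces $T\in\cas_{n-k}$, giving $\asymp\binom{n-k}{k}6^k$ such triangles — or it is one of the two ``intermediate'' children $T_2,T_3$ of an element of $\cas_{n-k}\cap\tau_{n-1}$, which contributes a further term of the same exponential order. Summing $\kappa$ (so each $6^k$ becomes $3^k$) we arrive at
\[
\int_{\conv(f(V(\Delta_0)))}\!\! S_n(r)\,dr\le C\,2^{-n\alpha}\sum_{k=0}^{\lfloor nd_1\rfloor}\binom{n-k}{k}3^k
\]
for a constant $C$ independent of $n$.

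Finally I would extract the exponential rate by Stirling's formula: with $k=dn$,
\[
\frac1n\log\!\left(\binom{n-k}{k}3^k\right)\longrightarrow (1-d)\log(1-d)-d\log d-(1-2d)\log(1-2d)+d\log 3=h_B(d)\log 2.
\]
Since this rate (equivalently $h_B$) is increasing on $(0,\tfrac13)$ and $d_1<h_B^{-1}(\alpha)<\tfrac13$, the sum over $k\le nd_1$ is governed by its top term, so
\[
\int_{\conv(f(V(\Delta_0)))}\!\! S_n(r)\,dr\le e^{\,n\log 2\,(h_B(d_1)-\alpha)+o(n)},
\]
which is summable in $n$ because $h_B(d_1)<\alpha$. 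Hence $\sum_n S_n(r)<\infty$, so in particular $S_n(r)\to 0$ and a fortiori $\limsup_n S_n(r)=0<\tfrac12$, for almost every $r\in\conv(f(V(\Delta_0)))$ — which is exactly \eqref{kicsi cond 2}. The step I expect to be the main obstacle is the triangle count: correctly accounting for the ``intermediate'' level-$n$ triangles that are not $\cas$-elements, and checking that they do not alter the exponential rate, so that the critical threshold lands precisely on the stated function $h_B$ (note that here the relevant binomial is $\binom{n-k}{k}$ rather than the $\binom{n}{k}$ of the Hausdorff case, reflecting that one sums over $\tau_n$ rather than over $\cas_n$, and that no factor $2^{-(n+k)\alpha}$ appears). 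Everything after that is a routine variant of the analysis in Theorem~\ref{allexpthm} — in fact lighter, since neither the mass distribution principle nor the loss of the factor $\tfrac{d_1}{1+d_1}$ is involved.
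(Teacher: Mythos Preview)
Your proposal is correct and follows essentially the same route as the paper. The paper also reduces to verifying \eqref{kicsi cond 2} for a.e.\ $r$ and each $d_1<h_B^{-1}(\alpha)$, bounds the contribution at level $n$ by $C\,2^{-n\alpha}\sum_{k\le nd_1}\binom{n-k}{k}3^k$ via the same triangle count (it records the crude bound ``at most $3\binom{n-k}{k}6^k$ elements of $\tau_n$ with conductivity $2^{-k}$'', absorbing your intermediate $T_2,T_3$ triangles into a constant), and finishes with the same Stirling computation. The only cosmetic difference is that the paper applies Markov's inequality to pass from your $\int S_n(r)\,dr$ to $\lambda\{r:S_n(r)\ge\tfrac12\}$ and then invokes Borel--Cantelli, whereas you use $\sum_n\int S_n<\infty\Rightarrow S_n(r)\to 0$ a.e.\ directly; your version is in fact slightly cleaner since it sidesteps the $\limsup\le\tfrac12$ versus $<\tfrac12$ issue.
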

\begin{proof}
It follows from a simple calculation that $h_B$ is strictly monotone on $\left(0,\frac{1}{3}\right]$ and
its range contains $(0, 1]$, thus the proposed bound indeed makes sense for any $\alpha\in (0, 1]$.

By Lemma \ref{d_1 lemma_B}, it is enough to prove that \eqref{kicsi cond 2} holds for almost every $r\in\conv(f(V(\Delta_0)))$ and every $d_1 \in (0, h_B^{-1}(\alpha))$.
For every $n\in\N$ we have
\begin{equation}%\label{6-os szamolas.0_B}
\begin{gathered}
L_{B,n,d_1} := 
\lambda\left(\left\{r\in\R : \sum\nolimits_{\substack{T\in G_n(r),  \kappa(T)\ge 2^{-nd_1}}} \kappa(T) \ge \frac12\right\}  \right) 
\le 2 \sum_{\substack{T\in\tau_n, \\ \kappa(T)\ge 2^{-nd_1}}} |f(T)|\kappa(T).
\end{gathered}
\end{equation}
For every $n,n'\in\N$ and $T\in\tau_n\cap\cas_{n'}$
the set $\{T^+\in\cas_{n'+1} : T^+\subset T\}$ 
has $1$ element with conductivity $\kappa(T)$ (which is in $\tau_{n+1}$) and $6$ elements with conductivity $\frac12\kappa(T)$ (which are in $\tau_{n+2}$),
hence $\tau_n$ has at most $3\binom {n-k}k 6^k$ elements with conductivity $2^{-k}$.
Moreover,  $|f(T)|\le |T|^\alpha=2^{-n\alpha}$.
Thus
\begin{equation}\label{6-os szamolas_B}
\begin{gathered}
L_{B,n,d_1} 
\le 2\cdot 3\sum_{k=0}^{\lfloor nd_1 \rfloor} \binom{n-k}{k} 6^k \cdot 2^{-n\alpha} 2^{-k}
= 6\sum_{k=0}^{\lfloor nd_1 \rfloor} \binom{n-k}{k} 3^k \cdot 2^{-n\alpha} 
=: M_{B,n, d_1, \alpha}.
\end{gathered}
\end{equation}

It is again enough to prove that $\sum_{n}M_{B,n, d_1, \alpha}$ is convergent. 
%In what follows, we will precisely determine when this latter series is convergent in terms of $d_1, \alpha$.
It can be computed that $h(1/3)>1$, hence $d_1<1/3$.

As $\binom{n-k}{k}\le\binom{n}{ \lfloor nd_1 \rfloor }$, we have
$$
M_{B,n, d_1, \alpha}
\le 6(nd_1+1)\binom{n}{ \lfloor nd_1 \rfloor } 3^{nd_1} \cdot 2^{-n\alpha}.
$$

Analogously to the proof of Theorem \ref{allexpthm} we obtain that
\begin{align*}
\frac{\log M_{B,n, d_1, \alpha}}{n}
&\le (1-d_1)\log(1-d_1)-(d_1\log d_1 + (1-2d_1)\log(1- 2d_1)) \\&\quad + d_1\log 3 - \alpha\log 2 +o(1) \\
&=: c_B(d_1, \alpha) + o(1).
\end{align*}

But this series converges if and only if $c_B(d_1, \alpha) < 0$, which can be rearranged to the form $h_B(d_1) < \alpha$, or equivalently, $d_1 < h_B^{-1}(\alpha)$.
\end{proof}

\section{Improved upper estimate for the Sierpiński triangle} \label{sec:sier_triangle_upper}

Our strategy is similar to the one followed in \cite[Section~4]{sierc}. First one needs to find a
 a suitably chosen H\"older-$\alpha$ function with small level sets in the sense of upper box, and hence of Hausdorff dimension. 
 Then using this function as building block in suitable approximations to functions in a dense set of the corresponding H\"older-space one can obtain the result about generic functions.
The key difference is that in  \cite{sierc} we used functions with "linear" level sets, now we use a tricky construction 
yielding H\"older-$\alpha$ functions with more flexible non-linear level sets. This yields a substantially more complicated construction, but the reward is sufficiently engaging:
the resulting upper bound is very close to our lower bound, both numerically and symbolically, in contrast to our previous upper bound.

The existence of the appropriate building blocks is guaranteed by the following lemma:

\begin{lemma} \label{lemma:upper_estimate}
For every $\alpha\in(0,1)$ and $\eps>0$ there is a H\"older-$\alpha$ function $\fff\colon \Delta\to[0,1]$ such that for a.e. $r\in\R$ we have 
\begin{equation}\label{haromszog_felso_eq}
\dim_H \fff^{-1}(r)\leq \udimb \fff^{-1}(r) \le \frac{h^{-1}(\alpha)}{1+h^{-1}(\alpha)}+\eps, 
\end{equation}
where 
\begin{equation}\label{h definicioja}
h(t)= \frac{-(1-t)\log_2(1-t) - t\log_2(t) + t }{1+t}.
\end{equation}
on the domain $(0, 1/2]$. Moreover, $\varphi$ equals $1$ at one vertex of $\DDD$ and zero at the other two.
\end{lemma}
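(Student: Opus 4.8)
The plan is to build $\varphi$ by a recursive, essentially self-similar procedure on the triangles of $\Delta$, monitored by an auxiliary conductivity $\widehat\kappa\colon\tau\to(0,1]$ that is the efficient counterpart of the $\kappa$ of Section~\ref{sec:sier_triangle_lower}. Writing $H(t)=-t\log_2 t-(1-t)\log_2(1-t)$ for the binary entropy, the $h$ of the Lemma is $h(t)=\frac{H(t)+t}{1+t}$, i.e.\ it is the analogue of the $h$ in Theorem~\ref{allexpthm} with $\log 6$ replaced by $\log 2$; accordingly the scheme for $\widehat\kappa$ should produce, at stage $n$ of the associated hierarchy $\cas'_n$, only $\lesssim\binom nk 2^k$ triangles of conductivity $2^{-k}$ (each sitting in $\tau_{n+k}$), in contrast with the $\binom nk 6^k$ of the lower-bound scheme. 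Concretely, inside each triangle $T$ one designates a ``keeper'' subtriangle that inherits the whole oscillation of $\varphi|_T$, while on the rest the oscillation is halved and squeezed into a thin sub-part; this is precisely the mechanism that makes the level sets, for most $r$, non-linear but routed through very few sub-triangles. The function is pinned down by prescribing its values at the three midpoints of every $T\in\tau$ and passing to the limit via density of $V$ in $\Delta$; at the top level we put $\varphi=1$ at one vertex of $\Delta_0$ and $\varphi=0$ at the other two, which yields the ``moreover'' clause. As in the proof of Theorem~\ref{allexpthm} one first checks that $h$ is strictly increasing on $(0,\tfrac12]$ with $h(0+)=0$ and $h(\tfrac12)=1$, so that $h^{-1}(\alpha)$, hence the right-hand side of \eqref{haromszog_felso_eq}, is defined for every $\alpha\in(0,1)$.

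The second step is to verify that $\varphi\in C_C^{\alpha}(\Delta)$ for a suitable $C$. It suffices to bound $\operatorname{osc}(\varphi,T)$ for $T\in\tau_m$, because any two points at distance $\asymp 2^{-m}$ lie in a bounded number of triangles of $\tau_m\cup\tau_{m+1}$. The recursion gives an estimate of the shape $\operatorname{osc}(\varphi,T)\le C\,\widehat\kappa(T)\,2^{-n\gamma}$ for $T\in\cas'_n$, and feeding in the combinatorics ($\widehat\kappa(T)=2^{-k}\Rightarrow T\in\tau_{n+k}$) reduces the H\"older bound to the inequality $\frac{H(t)+t}{1+t}\le\alpha$, i.e.\ $h(t)\le\alpha$, where $t$ is the design parameter governing the density of ``halving'' steps. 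Taking $t$ slightly below $h^{-1}(\alpha)$ makes this strict and keeps the H\"older constant finite.

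For the level sets I would run a Borel--Cantelli argument parallel to the proof of Theorem~\ref{allexpthm}, with $6$ replaced by $2$. Fix $d_1\in(0,h^{-1}(\alpha))$. For $r\notin\varphi(V)$, the set of $r$ for which $\varphi^{-1}(r)$ meets some $T\in\cas'_n$ with $\widehat\kappa(T)\ge 2^{-nd_1}$ has measure at most $\sum_{T\in\cas'_n,\ \widehat\kappa(T)\ge 2^{-nd_1}}|\varphi(T)|\le\sum_{k\le nd_1}\binom nk 2^k 2^{-(n+k)\alpha}$, and the Stirling computation from the proof of Theorem~\ref{allexpthm} shows this is summable in $n$ exactly because $h(d_1)<\alpha$. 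Hence for a.e.\ $r$ the level set $\varphi^{-1}(r)$ is, from some stage on, routed only through triangles of $\cas'_n$ whose conductivity stays below $2^{-nd_1}$; using the special structure of $\varphi$ — inside each triangle the level-set portion continues into at most two children, and a genuine branch costs a halving of $\widehat\kappa$, so the total ``conductivity mass'' along the route is non-increasing — one bounds the number of triangles of $\tau_N$ met by $\varphi^{-1}(r)$ by $2^{(\frac{d_1}{1+d_1}+o(1))N}$. This gives $\overline{\dim}_B\varphi^{-1}(r)\le\frac{d_1}{1+d_1}$; letting $d_1\uparrow h^{-1}(\alpha)$ along a sequence and collecting the slack of the construction and the $o(1)$ losses into $\varepsilon$ yields \eqref{haromszog_felso_eq}, while $\dim_H\le\overline{\dim}_B$ holds trivially.

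I expect the construction of $\varphi$ together with its routing property to be the main obstacle. One must exhibit a single globally defined, genuinely H\"older-$\alpha$ function on $\Delta$ whose level sets — over the entire range of $r$, not just a convenient subfamily — are at the same time confined (for a.e.\ $r$) to $\lesssim 2^{(\frac{h^{-1}(\alpha)}{1+h^{-1}(\alpha)}+\varepsilon)N}$ triangles at scale $2^{-N}$ and compatible with continuity and with the midpoint-matching constraints of the Sierpi\'nski subdivision. This is the converse-type counterpart of Lemma~\ref{d_1 lemma}: there, avoiding well-conducting triangles forces the dimension of a level set to be large, whereas here the geometry of $\varphi$ must be arranged so that the analogous condition forces it to be small — and it is precisely there that the ``non-linear level set'' trick and most of the technical bookkeeping are spent. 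A secondary difficulty is keeping the H\"older constant under control while pushing $t$ toward $h^{-1}(\alpha)$, and handling the passage from the auxiliary hierarchy $\cas'_n$ to the genuine scales $2^{-N}$.
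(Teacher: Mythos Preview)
Your outline identifies the right target --- a function whose level sets are routed through roughly $\binom{n}{k}2^k$ pieces of $\tau_{n+k}$, the $2$ being dual to the $6$ of the lower-bound scheme --- and you rightly flag the construction of $\varphi$ and its routing property as the main obstacle. But that obstacle is not overcome: the phrase ``keeper inherits full oscillation, the rest is halved and squeezed into a thin sub-part'' does not pin down a function, and taken literally it fails to be H\"older-$\alpha$ (along the all-keeper path one would have $\operatorname{osc}(\varphi,T)=1$ for $T\in\tau_n$ and every $n$, so $\operatorname{osc}(\varphi,T)/|T|^\alpha\to\infty$). The paper's construction is substantially different and fully explicit: it abandons the standard three-map subdivision for a seven-map IFS on $\Delta$ with similarities of ratios $\tfrac12$ and $\tfrac14$, fixes a block length $k_*$ and threshold $w$, and builds $\varphi$ so that it is constant on every $\Delta_\iota$ with $\iota\notin\mathcal I:=\{\iota\in\{0,2,3\}^{k_*}:\ \text{at most }w\text{ non-}3\text{ digits}\}$, while distributing the range $[0,1]$ \emph{uniformly} over $\mathcal I$ --- not hierarchically as your $\widehat\kappa$ would suggest. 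The H\"older claim then reduces to the concrete inequality $\#\mathcal I\ge 2^{(k_*+w)\alpha}$, which via Stirling becomes $h(t)\le\alpha$ for $t=w/k_*$.

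More seriously, your Borel--Cantelli step points the wrong way. You correctly argue that for a.e.\ $r$ the level set eventually meets only triangles of $\cas'_n$ with $\widehat\kappa<2^{-nd_1}$, i.e.\ with $k>nd_1$ halvings along each path. But in your routing model each halving is a branching, so at stage $n$ the level set sits in $2^k$ triangles of $\tau_{n+k}$ with $k>nd_1$; at scale $2^{-N}$, $N=n+k$, this gives $2^{N-n}>2^{Nd_1/(1+d_1)}$ pieces --- a \emph{lower} bound on the count and hence on the box dimension, recovering Lemma~\ref{d_1 lemma} rather than its converse. (This is also why ``letting $d_1\uparrow h^{-1}(\alpha)$'' moves the bound the wrong direction.) What is needed for an upper bound is an \emph{upper} bound on the number of branchings, and this is exactly what the paper's block restriction hard-codes: for every $r$ outside a countable set the level set is precisely $\bigcap_m\Delta_{\iota^r_1,\ldots,\iota^r_m}$ with $(\iota^r_m)\in\mathcal I^\infty$, hence meets at most $2^{nw}$ triangles of $\tau_{n(k_*+w)}$, giving $\overline{\dim}_B\varphi^{-1}(r)\le\frac{w}{k_*+w}=\frac{t}{1+t}$ by direct counting --- no Borel--Cantelli at all.
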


As the proof is quite complicated, before elaborating it, we discuss its consequences. Lemma \ref{lemma:upper_estimate}
can be used to prove an upper bound on $D_{\underline{B}*}(\alpha, \Delta)$  using precisely the same argument which was used in deducing
\cite[Theorem~4.5]{sierc} from \cite[Lemma~4.4]{sierc}:
using the self-similar structure, a function with small level sets can be used as a building block to create a dense set of functions with small level sets, and then one can apply
on Lemma \ref{*lemdfb}. As an argument of this type will be presented in detail to prove Theorem \ref{thm:cross_small_alpha} from Lemma \ref{a_neq_lemma}, we omit the details here and
simply state the theorem.

\begin{theorem}\label{haromszog_felso}
For every $\alpha\in(0,1)$ we have 
\begin{equation}
D_*(\alpha, \Delta) \le D_{\underline{B}*}(\alpha, \Delta) \le \frac{h^{-1}(\alpha)}{1+h^{-1}(\alpha)}, 
\end{equation}
where 
\begin{equation}
h(t)= \frac{-(1-t)\log_2(1-t) - t\log_2(t) + t }{1+t}
\end{equation}
on the domain $(0, 1/2]$.
\end{theorem}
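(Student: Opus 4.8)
The plan is to treat the two inequalities separately. The inequality $D_*(\alpha,\Delta)\le D_{\underline{B}*}(\alpha,\Delta)$ needs no construction at all: for every function $f$ and every level $r$ one has $\dim_H(f^{-1}(r))\le\ldimb(f^{-1}(r))$, hence $D_*^f(\Delta)\le D_{\underline{B}*}^f(\Delta)$ for every $f$; intersecting the two dense $G_\delta$ sets provided by Theorem \ref{*thmgenex} and evaluating at any point of the intersection (which is nonempty by Baire's theorem, since $C_1^\alpha(\Delta)$, as a closed subspace of $C(\Delta)$, is a complete metric space) gives $D_*(\alpha,\Delta)\le D_{\underline{B}*}(\alpha,\Delta)$.

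For the upper bound I would fix $\eps>0$, write $c_\eps:=\frac{h^{-1}(\alpha)}{1+h^{-1}(\alpha)}+\eps$, and take the building block $\varphi=\varphi_\eps$ from Lemma \ref{lemma:upper_estimate}: a Hölder-$\alpha$ function equal to $1$ at one vertex of $\Delta$ and $0$ at the other two, whose level sets satisfy $\udimb\varphi^{-1}(r)\le c_\eps$ for a.e.\ $r$. Dividing $\varphi$ by its Hölder constant (this affects neither the level sets nor their dimensions) I may assume $\varphi\in C_1^\alpha(\Delta)$, with a little slack to spare in the Hölder inequality. The aim is then to produce a countable dense subset $\{f_k\}_{k\in\N}\subset C_1^\alpha(\Delta)$ with $D_{\underline{B}*}^{f_k}(\Delta)\le c_\eps$ for every $k$. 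Granting this, Lemma \ref{*lemdfb} (with $E=\Delta$, which is closed, and $\cau=C_1^\alpha(\Delta)$) supplies a dense $G_\delta$ set $\cag_\eps$ such that every $f\in\cag_\eps$ satisfies $D_{\underline{B}*}^f(\Delta)\le\sup_k D_{\underline{B}*}^{f_k}(\Delta)\le c_\eps$; intersecting the sets $\cag_{1/j}$ over $j\in\N$ and then with the dense $G_\delta$ set of Theorem \ref{*thmgenex}, Baire's theorem furnishes a function $f$ with $D_{\underline{B}*}(\alpha,\Delta)=D_{\underline{B}*}^f(\Delta)\le\inf_j c_{1/j}=\frac{h^{-1}(\alpha)}{1+h^{-1}(\alpha)}$, as desired.

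The construction of $\{f_k\}$ is the substantive step, and I would carry it out exactly along the lines of the forthcoming deduction of Theorem \ref{thm:cross_small_alpha} from Lemma \ref{a_neq_lemma}, exploiting that every $T\in\tau_N$ is a similar copy of $\Delta$ scaled by $2^{-N}$. Starting from an arbitrary $g\in C_1^\alpha(\Delta)$ and a precision $\delta>0$, I would first use Lemma \ref{lipschitzapprox} to replace $g$ by a Lipschitz function lying in $C_{1-\eta}^\alpha(\Delta)$ for some $\eta>0$, then choose $N$ so large that the oscillation of $g$ on every $T\in\tau_N$ and the amplitude $2^{-N\alpha}$ of a scaled copy of $\varphi$ on such a triangle are both tiny relative to $\delta$ and $\eta$. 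On each $T\in\tau_N$ I would then let $\widetilde g|_T$ be an appropriately oriented affine rescaling of $\varphi$ taking, at the three vertices of $T$, prescribed values within $\delta$ of those of $g$, with total variation $\lesssim 2^{-N\alpha}$. The delicate point—and what I expect to be the main obstacle—is the reconciliation at shared vertices: the rescaled copies of $\varphi$ implanted in neighbouring triangles must agree at the vertex they have in common, so that $\widetilde g$ patches together into a single continuous, indeed Hölder-$\alpha$, function. This is handled as in the cited argument, at the cost of a bounded number of additional subdivision levels, which affects neither the Hölder exponent nor the box dimension of the resulting level sets; the global $1$-Hölder-$\alpha$ estimate then follows by combining the Hölder constant of $\varphi$ with the factor $2^{-N\alpha}$ and the slack $\eta$, while $\|\widetilde g-g\|_\infty\lesssim\delta$. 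Applying this to each member of a fixed countable dense subset of $C_1^\alpha(\Delta)$, with $\delta$ ranging over $\{1/m:m\in\N\}$, yields the required countable dense family.

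Finally I would verify $D_{\underline{B}*}^{f_k}(\Delta)\le c_\eps$. For a.e.\ $r$, the level set $f_k^{-1}(r)$ meets each of the finitely many (at most $3^N$) triangles $T\in\tau_N$ either in the empty set, in a subset of the countable set $V$, or in an affine copy of some $\varphi^{-1}(r')$ with $\udimb\varphi^{-1}(r')\le c_\eps$; the levels $r$ for which some triangle contributes a piece of larger box dimension form a null set, being a finite union of the null sets supplied by Lemma \ref{lemma:upper_estimate}. Since upper box dimension is finitely stable, $\udimb f_k^{-1}(r)\le c_\eps$, hence also $\ldimb f_k^{-1}(r)\le c_\eps$, for a.e.\ $r$, so $D_{\underline{B}*}^{f_k}(\Delta)\le c_\eps$. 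This closes the loop and completes the proof.
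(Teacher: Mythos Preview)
Your proposal is correct and follows essentially the same approach as the paper: the paper also derives the theorem from Lemma \ref{lemma:upper_estimate} by using the self-similar structure to implant rescaled copies of $\varphi$ on small triangles, producing a dense family with small level sets, and then invoking Lemma \ref{*lemdfb}; it explicitly points to the argument of Theorem \ref{thm:cross_small_alpha} (and the analogous deduction in \cite{sierc}) for the details, just as you do. Your description of the vertex-matching step is a bit sketchy, but since the paper itself omits these details and defers to the cited argument, your level of detail is in line with the paper's own treatment.
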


\begin{remark}
In the following we prove that the upper and the lower bounds for $D_*(\alpha,\Delta)$ given by Theorem \ref{allexpthm} and Theorem \ref{haromszog_felso} are asymptotically equal as $\aaa\to 0+$.
For this it is enough to show that $h_l^{-1}$ (where $h_l$ is the function $h$ in the statement of Theorem \ref{allexpthm}, labeled as ``current lower'' and is shown in dark green on Figure \ref{fig:siertestimates}) is asymptotically equal to $h_u^{-1}$ (where $h_u$ is the function $h$ in the statement of Theorem \ref{haromszog_felso}, labeled as ``current upper'' and is shown in magenta on Figure \ref{fig:siertestimates}),

We have for any $\alpha\in (0,1)$ that
$$\int\limits_{h^{-1}_l(\alpha)}^{h^{-1}_u(\alpha)} h'_l(t) dt 
= h_l(h^{-1}_u(\alpha))-h_l(h^{-1}_l(\alpha))
= h_l(h^{-1}_u(\alpha))-h_u(h^{-1}_u(\alpha)).
$$
Using an estimate of the integral we obtain
$$\left(h^{-1}_u(\alpha)-h^{-1}_l(\alpha)\right) \cdot \min_{t\in [h^{-1}_l(\alpha),h^{-1}_u(\alpha)]} h'_l(t)
\le h_l(h^{-1}_u(\alpha))-h_u(h^{-1}_u(\alpha)).
$$
By rearranging
$$
h^{-1}_u(\alpha)-h^{-1}_l(\alpha) 
\le \left(h_l(h^{-1}_u(\alpha))-h_u(h^{-1}_u(\alpha))\right) \cdot \max_{t\in [h^{-1}_l(\alpha),h^{-1}_u(\alpha)]} \frac1{h'_l(t)}.$$
Therefore, $$ 
\frac{h^{-1}_u(\alpha)-h^{-1}_l(\alpha)}{h_u^{-1}(\alpha)}
\le \frac{h_l(h^{-1}_u(\alpha))-h_u(h^{-1}_u(\alpha))}{h_u^{-1}(\alpha)} \cdot \max_{t\in [h^{-1}_l(\alpha),h^{-1}_u(\alpha)]} \frac1{h'_l(t)}.
$$
Since $|h_u(t)-h_l(t)| = t\log_2 3$ and it can be checked that $\lim_{t\to 0} h'_l(t)= \infty$, the first factor of the right hand-side tends to $\log_2 3$ as $\alpha\to0$ while the second factor tends to $0$, hence the left hand-side tends to $0$ as well. This implies that $h_u^{-1}$ and $h_l^{-1}$ are asymptotically equal indeed. 
\end{remark} 

\begin{proof}[Proof of Lemma \ref{lemma:upper_estimate}]
Similarly to the proof of Theorem \ref{allexpthm}, simple calculation shows that $h$ bijectively maps $(0, 1/2]$ to $(0, 1]$.

The main difficulty of the proof lies in describing the construction of $\varphi$. We will do so in four steps as follows:
\begin{enumerate}
    \item Fixing notation concerning certain subtriangles of $\Delta$.
    \item Heuristic description of $\varphi$, subject to some parameters $k_*, w$ to be fixed.
    \item Defining $\varphi$ in a subset of $\Delta$.
    \item Extending $\varphi$ to $\Delta$.
\end{enumerate}
Following these steps, we will prove in subsequent claims that for well-chosen $k_*, w$, the resulting $\varphi$ has the proposed properties.
\begin{comment}
First, we prove that %there is a $b\in(\infty)$ such that 
$h$ maps $\big(0,\frac12\big]$ bijectively onto $(0,1]$. Indeed, $h\big(\frac12\big)=1$, all terms in the numerator of \eqref{h definicioja} tend to $0$ as $t\to0$, and
\begin{equation}
\begin{split}
h'(t) =& \frac{\big(\log_2(1-t)+\frac1{\log(2)}-\log_2(t)-\frac1{\log(2)}+1\big)(1+t)}{(1+t)^2} \\ 
& - \frac{-(1-t)\log_2(1-t) - t\log_2(t) + t}{(1+t)^2} \\
=& \frac{2\log_2(1-t)-\log_2(t)+1}{(1+t)^2},
\end{split}
\end{equation}
which is greater than $0$ for every $t\in\big(0,\frac12\big)$.
\end{comment}

\medskip

\noindent {\bf Step 1.}
    Fixing notation concerning subtriangles of $\Delta$ relevant to the construction of the function $\varphi$.

\medskip

\begin{figure}[h] 
\begin{center}
    \includegraphics[width=12.5cm]{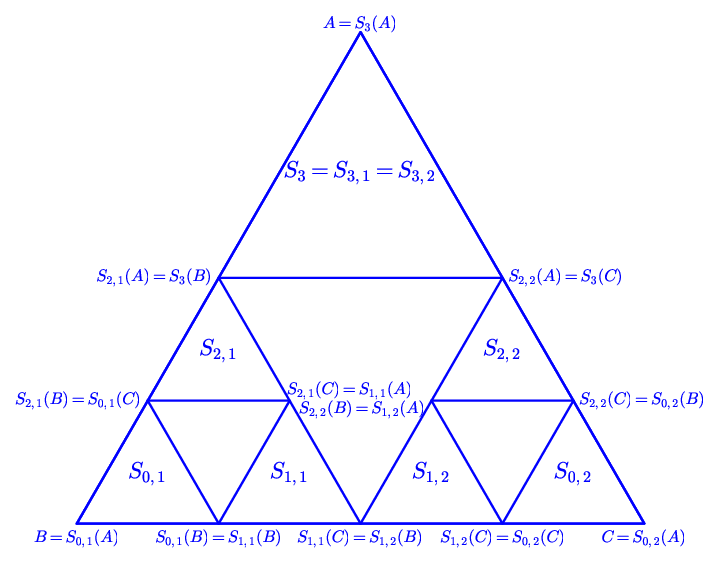}
 \caption{The definition of the similarities.}\label{csucs_def}
 \label{fig_sim}
\end{center}
\end{figure}

Let the vertices of $\Delta$ be denoted by $A$, $B$, and $C$.
Define orientation preserving similarities $S_{0,1},S_{0,2},S_{1,1},S_{1,2},S_{2,1},S_{2,2},S_{3,1},S_{3,2},S_3\colon \R^2\to\R^2$ as in Figure \ref{csucs_def}, 
that is $\ds \DDD$ is mapped onto the triangles labelled by the names of the similarities,
it is also shown which vertices are mapped onto the corresponding vertices of the image triangles.
To be more precise these similarities satisfy the following:
\begin{itemize}
%\item they are orientation preserving,
%\item $S_1$ has similarity ratio $\frac12$ and the other ones have similarity ratio $\frac14$,
\item $S_{3,1}=S_{3,2}=S_3$, and $S_{3,1},S_{3,2},S_3$ have similarity ratio $\frac12$, the other ones have similarity ratio $\frac14$,
\item each of $S_{1,1},S_{1,2},S_{2,1},S_{2,2},S_3$ can be written as the composition of a uniform scaling transformation and a translation,
\item $S_{0,1}(A)=B$, $S_{0,1}(C)=\frac{A+3B}4$,
\item $S_{0,2}(A)=C$, $S_{0,2}(B)=\frac{A+3C}4$,
\item $S_{1,1}(C)=\frac{B+C}2$, %$S_{1,1}(B)=\frac{3B+C}4$, 
\item $S_{1,2}(B)=\frac{B+C}2$, %$S_{1,2}(C)=\frac{B+3C}4$,
\item $S_{2,1}(A)=\frac{A+B}2$, %$S_{2,1}(B)=\frac{A+3B}4$,
\item $S_{2,2}(A)=\frac{A+C}2$, %$S_{2,2}(C)=\frac{A+3C}4$,
\item $S_3(A)=A$. %$S_3(B)=\frac{A+B}2$.
\end{itemize}

\begin{figure}[h] 
    \includegraphics[width=\textwidth]{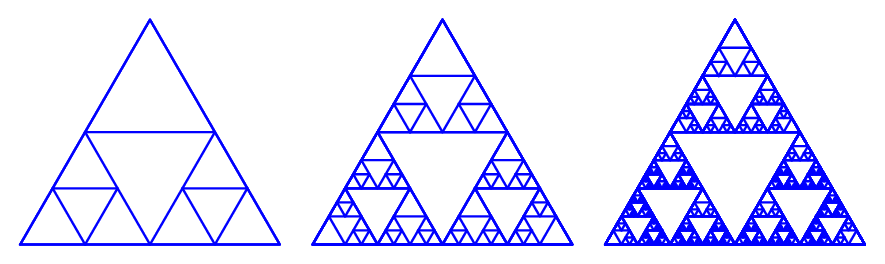}
 \caption{The first three steps of the iteration of the iterated function system $\{S_{0,1},S_{0,2},S_{1,1},S_{1,2},S_{2,1},S_{2,2},S_3\}$.}

\end{figure}

\begin{definition}\label{defdeltaiota}
If $k\in\N$, $\iota=(i_1,\ldots,i_{k})\in\{0,1,2,3\}^{k}$ and 
$\lambda=(l_1,\ldots,l_{k})\in\{1,2\}^{k}$, let 
$$S_{\iota,\lambda} := S_{i_1,l_1}\circ\ldots\circ S_{i_{k},l_{k}},$$
%and
%$$
%\Delta_{\iota} := \bigcup_{\lambda'\in\{1,2\}^{k}} S_{\iota,\lambda'}(\Delta),
%$$
and for every $\iota_1,\ldots,\iota_m\in\{0,1,2,3\}^{k}$ let
$$
\Delta_{\iota_1,\ldots,\iota_m} :=  \bigcup_{\lambda_1,\ldots,\lambda_m\in\{1,2\}^{k}} S_{\iota_1,\lambda_1}\circ\ldots\circ S_{\iota_m,\lambda_m}(\Delta),
$$
see Figure \ref{fig:iota} for some illustrations when $k=3$ and $m=1$. 

If $k,m\in\N$, $\iota_1,\ldots,\iota_m\in\{0,1,2,3\}^k$ 
we  put 
\begin{equation}\label{*frakI}
{\mathfrak I}((\iota_1,\ldots,\iota_m))=(i_{1,1},\ldots,i_{k,1},\ldots,i_{1,m},\ldots,i_{k,m})
\in\{0,1,2,3\}^{km}.
\end{equation}
\end{definition}

\medskip

Observe that
\begin{equation}\label{frakieq}
\Delta_{\iota_1,\ldots,\iota_m} =
\Delta_{{\mathfrak I}(\iota_1,\ldots,\iota_m)}. 
\end{equation}

\medskip

\begin{claim}\label{sides_claim}
 For every $\iota=(i_1,\ldots,i_k)\in\{0,2,3\}^k$ the sets $\Delta_\iota\cap\overline{AB}$ and $\Delta_\iota\cap\overline{AC}$ are line segments (where $\overline{AB}$ and $\overline{AC}$ denote the line segments determined by the points).
\end{claim}
\begin{proof}[Proof of Claim \ref{sides_claim}]
Apply mathematical induction on $k$. 
For $k=1$  by looking at Figure \ref{fig_sim} one can observe that $S_{i,l}(\Delta)$ is not intersecting a side $\overline{AB}$ or
$\overline{AC}$ iff $i=1$. For $i\not=1$ the set $S_{i,1}(\Delta)$ intersects $\overline{AB}$ while $S_{i,2}(\Delta)$ intersects $\overline{AC}$ in one interval. 
 
If $k>1$ and we have proved this statement for $k-1$, then  $\overline{AB}\cap\Delta_{(i_2,\ldots,i_{k})} \neq\emptyset$, hence $\overline{AB}\cap S_{i_1,1}(\Delta_{(i_2,\ldots,i_{k})}) \neq\emptyset$ and $\overline{AC}\cap S_{i_1,2}(\Delta_{(i_2,\ldots,i_{k})}) \neq\emptyset$.
\end{proof}

\begin{figure}[h] 
    \includegraphics[width=0.6\textwidth]{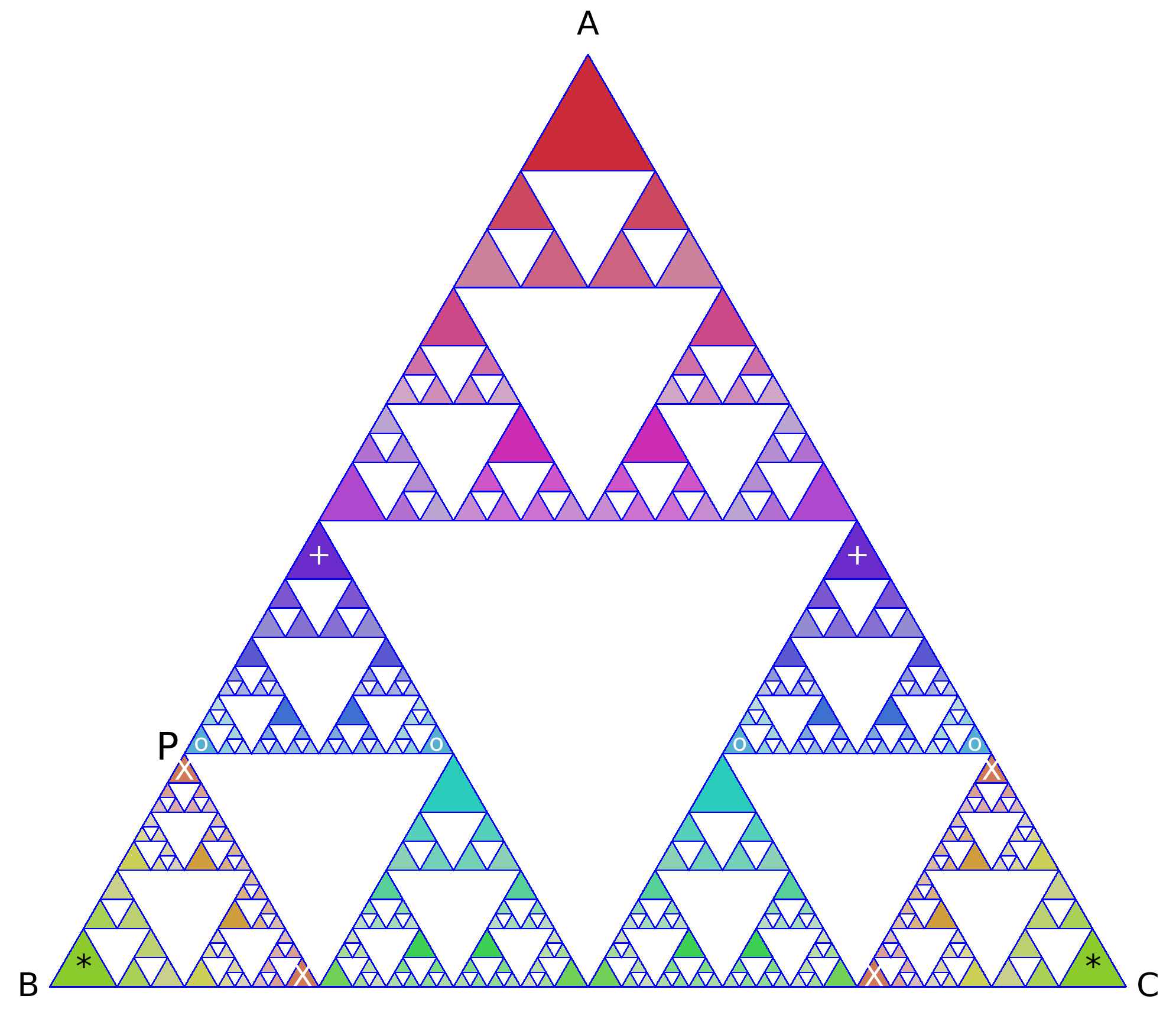}
 \caption{The illustration of
different $\Delta_{\iota}$ sets (same color, same set, but different sets might have only slightly different, hardly distinguishable color), especially the two blue triangles marked by  white $+$s are forming
 $\Delta_{(2,3,3)}$, the two green triangles belonging to $\Delta_{(0,3,3)}$ are marked by black *s,  the four orange/japonica triangles belonging to  $\Delta_{(0,0,3)}$ are marked by tiny white $X$s (they are located close to the interior vertices of the lower left and lower right $1/4$th triangles), the four blue triangles belonging to  $\Delta_{(2,0,3)}$ are marked by tiny white $o$s.}
\label{fig:iota}
\end{figure}

\medskip

\noindent {\bf Step 2.}
    Heuristic description of the function $\varphi$.

\medskip

We will define a function $\varphi\colon\Delta\to\R$ with $\varphi(A)=1$, $\varphi(B)=\varphi(C)=0$, 
which is monotone on line segments $AB$ and $AC$, and it is constant on every $\bigcap_{k=1}^\infty \Delta_{\iota_k}$ where $\iota_k\in\{0,1,2,3\}^k$. 
It will also be constant on $\Delta_\iota$ if a digit of $\iota $ is $1$. The underlying idea is that we want to concentrate the increase of $\varphi$ from 0 to 1 to sets
$\bigcap_{k=1}^\infty \Delta_{\iota_k}$ with small dimension, which happens if most digits of the $\iota_k$s are 3, as it is easy to see heuristically. Notably, these sets yield the narrowest parts of $\Delta$.

To this end, we define the set of digit sequences which are dominated by 3s. 
\begin{definition}\label{defcai}
For $k_*,w\in\N$ to be fixed later we set
\begin{equation}\label{cai def}
%\begin{gathered}
\cai := \Big\{(i_1,\ldots,i_{k_*})\subset\{0,2,3\}^{k_*} : \#\big\{k\in\{1,\ldots,k_*\} : i_k\neq 3\big\} \le w\Big\}, %\setminus \\ \{\text{the maximal and the minimal elements of }\{0,1,2,3\}^{k_*}\}.
%\end{gathered}
\end{equation}
such that $\#\cai\ge2$. 
%To avoid some technical difficulties,  
%\begin{equation}\label{cai}
%\begin{gathered}
%\text{$\cai$ will not contain the maximal and the minimal elements of $\{0,1,2,3\}^{k_*}$ } \\
%\text{or adjacent elements of $\{0,2,3\}^{k_*}$ or $\{0,1,2,3\}^{k_*}$.}
%\end{gathered}
%\end{equation}
\end{definition}
\begin{remark}
We want to define our function $\fff$ in  a way that $\fff(B)=\fff(C)=0$ hence 
on the segment $BC$ it does not have to change much and therefore it is constant on the triangles
$S_{1,1}(\DDD)$ and $S_{1,2}(\DDD)$ and on their suitable images.  
This is why in the definition
of $\cai$ we use $\{0,2,3\}^{k_*}$, instead of $\{0,1,2,3\}^{k_*}$.
On the other hand, $\DDD$ is "narrow" at $S_{3}(\DDD)$, hence we want to have
most of the increase of $\fff$ on this triangle and on its suitable images,
that is we want to "squeeze" most of the level sets into these regions.
In case of continuous functions this can be done completely, however for
H\"older functions the level sets require more space. This motivates the assumption
 $\big\{k\in\{1,\ldots,k_*\} : i_k\neq 3\big\} \le w$ in the definition of $\cai.$
\end{remark}
Roughly speaking, $\varphi$ will satisfy $|\varphi(\Delta_
\iota)| = (\#\cai)^{-1}|\varphi(\Delta)|$ for every $\iota\in\cai$, while it will be constant on $\Delta_\iota$ for every $\iota\in\{0,1,2,3\}^{k_*}\setminus\cai$, and we will repeat this procedure in the elements of $\tau_{k_*},\tau_{2k_*},\tau_{3k_*},\ldots$ to get a self-affine graph (see Figure \ref{fig:phidef}).

\begin{figure}[h] 
    \includegraphics[width=\textwidth]{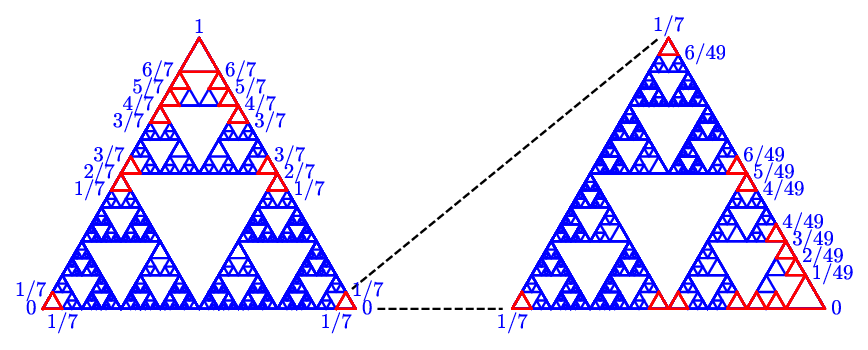}
 \caption{
The elements of of $\{ \Delta_\iota : \iota\in\cai \}$ are represented on the left by the red triangles for $k_*=3$ and $w=1$. 
We can see the values taken by $\varphi$ at the vertices of these triangles ($\varphi$ is constant on each connected blue component).
On the right the next step in one of these triangles is illustrated. Observe that the restriction of $\varphi$ to this small triangle is a self-affine copy of the graph of $\varphi$.}
\label{fig:phidef}
\end{figure}

%and $\iota'\in\{1,2,3\}^{k_*}\setminus\cai$ we have  $p(\iota)\le p(\iota')$ where 
%$$
%5p\left((i_1,\ldots,i_{k_*})\right) = \#\{k\in\{1,\ldots,k_*\} : i_k\neq 1\}.
%$$ 
Let us turn now to the precise definition, suppose that we have already fixed $k_*$, $w$ and $\cai$.

\medskip

\noindent {\bf Step 3.}
    Setting how $\varphi$ grows from $0$ to $1$ as we go from $B$ and $C$ to $A$ on the sides of $\DDD$, and consequently 
    defining $\varphi(x)$ if $x\in\bigcap_{m=1}^\infty \Delta_{\iota_1,\ldots,\iota_m}$ for some $\iota_1,\iota_2,\ldots\in\{0,2,3\}^{k_*}$.
    
\medskip

In order to carry out this step, we define an ordering of $\{\iota\in\{0,2,3\}^{k} : k\in\N\}$. Recall Claim \ref{sides_claim}.
\begin{definition}\label{defk4}
For $k,k'\in\N$, $\iota\in\{0,2,3\}^k$ and $\iota'\in\{0,2,3\}^{k'}$ we say that $\iota<_4\iota'$ if $\overline{AB}\cap\Delta_\iota$ is closer to $B$ than $\overline{AB}\cap\Delta_{\iota'}$. 
Recalling \eqref{*frakI} and \eqref{frakieq} in general
for 
every $m,m'\in\N$ and $\iota_1,\ldots, \iota_m\in\{0,2,3\}^{k}$ and  $\iota_1',\ldots, \iota_{m'}'\in\{0,2,3\}^{k'}$
we put
$$(\iota'_1,\ldots,\iota'_{m'}) <_4 (\iota_1,\ldots,\iota_m)
\text{ iff }
{\mathfrak I}((\iota_1',\ldots,\iota_{m'}'))<_4
{\mathfrak I}((\iota_1,\ldots,\iota_m)).$$
\end{definition}

%For the sake of simplicity, if $k,m\in\N$, $\iota_1,\ldots,\iota_m\in\{0,1,2,3\}^k$ and $\iota_{m'}=(i_{1,m'},\ldots,i_{k,m'})$ for every $m'\in\{1,\ldots,m\}$, 
%we will write $(\iota_1,\ldots,\iota_m)$ instead of $(i_{1,1},\ldots,i_{k,1},\ldots,i_{1,m},\ldots,i_{k,m})$.

\begin{claim}\label{casest}
Suppose $\iota_1,\iota_2,\ldots\in\{0,2,3\}^{k_*}$ and $m\in\N$. Then
\begin{equation}\label{cai cdot}
\begin{split}
&\#\{(\iota'_1,\ldots,\iota'_{m+1})\in\cai^{m+1} :  (\iota'_1,\ldots,\iota'_{m+1}) <_4 (\iota_1,\ldots,\iota_{m+1})\} \\
\ge &\#\{(\iota'_1,\ldots,\iota'_{m+1})\in\cai^{m+1} :  (\iota'_1,\ldots,\iota'_m) <_4 (\iota_1,\ldots,\iota_m) \} \\
= &(\#\cai)\cdot \#\{(\iota'_1,\ldots,\iota'_m)\in\cai^m :  (\iota'_1,\ldots,\iota'_m) <_4 (\iota_1,\ldots,\iota_m)\}
=: k'.
\end{split}
\end{equation}
 \end{claim}
 
 \begin{remark}\label{remcasincr}
 Setting $k := k'/\#\cai$, for every $\hat m\in\N$ we have that $\{\iota\in\cai^{m+\hat m} : \iota <_4 (\iota_1,\ldots,\iota_m)\} = k(\#\cai)^{\hat m}$ and $\cai^{m+\hat m}$ have $(\#\cai)^{\hat m}$ elements starting with $(\iota_1,\ldots,\iota_m)$ by the equality in \eqref{cai cdot}.
 \end{remark}

 \begin{proof}[Proof of Claim \ref{casest}]
If $(i_1,\ldots,i_k),(i'_1,\ldots,i'_k)\in \{0,2,3\}^k$ and $(i_1,\ldots,i_k)<_4(i'_1,\ldots,i'_k)$, then for every $i_{k+1},i'_{k+1}\in\{0,2,3\}$ we have 
$(i_1,\ldots,i_k,i_{k+1}) <_4 (i'_1,\ldots,i'_k,i'_{k+1})$\\ as $\Delta_{(i_1,\ldots,i_k,i_{k+1})}\subset\Delta_{(i_1,\ldots,i_k)}$ and 
$\Delta_{(i'_1,\ldots,i'_k,i'_{k+1})}\subset\Delta_{(i'_1,\ldots,i'_k)}$.
\end{proof}

%{\xcr Azt, hogy $(\iota'_1,\ldots,\iota'_{m+1}) <_4 (\iota_1,\ldots,\iota_{m+1})$ hol definiáltuk, hol magyaráztuk el? A Definition \ref{defk4} csak $k,k'\in\N$, $\iota\in\{0,2,3\}^k$ and $\iota'\in\{0,2,3\}^{k'}$ esetről szól!

%Válasz: Sajnos nem értem a kérdést. A mondatban sem a vesszős sem a vessző nélküli $\iota$-k nem tartalmaznak $1$-es számjegyet, hiszen $\cai$ elemei sem tartalmaznak $1$-est.
%Talán itt is a zárójelezés a probléma, de ezt most a Definition \ref{defk4} utáni mondat megoldja precízen, ha jól látom.}

By using Remark \ref{remcasincr} we can define the following function which connects $[0,1]$ and $\{0,2,3\}^{\N}$. 
If $\iota_1,\iota_2,\ldots\in\{0,2,3\}^{k_*}$, then
\begin{equation}\label{phi_cai_def}
\varphi_\cai((\iota_1,\iota_2,\ldots)):=\lim_{m\to\infty}\frac{\#\{(\iota'_1,\ldots,\iota'_m)\in\cai^m :  (\iota'_1,\ldots,\iota'_m) <_4 (\iota_1,\ldots,\iota_m)\}}{(\#\cai)^m}
\end{equation}
(this limit exists, since the fractions form an increasing sequence by \eqref{cai cdot}).

If we have two different sequences $(\iota_1,\iota_2,\ldots),(\iota''_1,\iota''_2,\ldots) \in \left(\{0,2,3\}^{k_*}\right)^\infty$ and 
$$
\bigcap_{m=1}^\infty\Delta_{\iota_1,\ldots,\iota_m} \cap \bigcap_{m=1}^\infty\Delta_{\iota''_1,\ldots,\iota''_m} \neq \emptyset,
$$
then
by induction on $m$ we see that $(\iota_1,\ldots,\iota_m)$ and $(\iota''_1,\ldots,\iota''_m)$ are adjacent with respect to $<_4$, 
which implies that replacing 
$(\iota_1,\ldots,\iota_m)$ with $(\iota''_1,\ldots,\iota''_m)$ in \eqref{phi_cai_def} changes the value of the numerator with at most $1$, hence the limit  in \eqref{phi_cai_def} remains the same.
Thus, if $x\in\bigcap_{m=1}^\infty \Delta_{\iota_1,\ldots,\iota_m}$ for some $\iota_1,\iota_2,\ldots\in\{0,2,3\}^{k_*}$, we can set
\begin{equation}\label{phi_def}
\varphi(x) := \varphi_\cai((\iota_1,\iota_2,\ldots)).
\end{equation}

\medskip

On the set $\{\Delta_{(i_1,\ldots,i_k)} : k\in\N \text{ and } i_1,\ldots,i_k\in\{0,2,3\}\}$, we can define an ordering corresponding to $<_4$ on the indices. With respect to this ordering,
$A$ is in the largest $\Delta_{(i_1,\ldots,i_k)}$ for every $k\in\N$, while $B$ and $C$ are in the smallest one, hence $\varphi(A)=1$ and $\varphi(B)=\varphi(C)=0$ indeed. 

\medskip

\noindent {\bf Step 4.}
    Extending $\fff$ to $\DDD$, namely onto sets of the form
    $\Delta_{(i_1,\ldots,i_k,1)}$ with $i_1,\ldots,i_k\in\{0,2,3\}$ for some $k\in\N\cup\{0\}$.

We put 
\begin{equation}\label{phi_def_2}
\varphi|_{\Delta_{(i_1,\ldots,i_k,1)}} :\equiv \varphi(\Delta_{(i_1,\ldots,i_k,0)}\cap \Delta_{(i_1,\ldots,i_k,2)}).
\end{equation}

We need to show that this definition is correct and does not contradict \eqref{phi_def} (in some vertices $\varphi$ was defined in both \eqref{phi_def} and \eqref{phi_def_2}).

	\medskip

As we intended to focus the increase of $\fff$ on the set considered in Step 3, we aim to deduce that on such connected complementary pieces $\fff$ can be defined to be constant indeed. Thus
further understanding of these sets' relative location will be required.

We start with an example. Suppose that we want to define $\varphi|\Delta_{(1)}:\equiv \varphi(\Delta_{(0)}\cap \Delta_{(2)})$.

We use  Figure \ref{fig:iota} as an illustration. The sequences $(0,0,3)$ and $(2,0,3)$ are adjacent according to $<_{4}$.
The sets $\Delta_{(0,0,3)}$ and $\Delta_{(2,0,3)}$ have one common intersection point $P$ on $\overline{AB}$, which coincides with $\Delta_{(0)}\cap \Delta_{(2)}$ on $\overline{AB}$. This is the common point of a blue triangle marked by a white $o$ and an orange/japonica triangle marked by a white $X$.
Observe the location of those triangles belonging to $\Delta_{(0,0,3)}$ and $\Delta_{(2,0,3)}$ which are not on $\overline{AB}$ or $\overline{AC}$. They have common vertices with the triangles $S_{1,1}(\Delta)$ and $S_{1,2}(\Delta)$ 
(see Figure \ref{csucs_def} as well) and $\fff$ takes the constant value $\fff(P)$ on these triangles.
These two triangles form $\DDD_{1}$. The sequences $(0,0,3)$ and $(2,0,3)$ are initial slices of sequences $\iota^{(0)}$ and $\iota^{(2)}$ such that 
$\Delta_{\iota^{(0)}_{k'}}\cap\Delta_{\iota^{(2)}_{k'}}$ is the same set for every $k'>0$, and its intersection with $\overline{AB}$ is a set with one element,
namely $P$. 

Hence, $\varphi$ can take  a constant value on the triangles $S_{1,1}$ and $S_{1,2}$, and this value is taken at some vertices of the triangles belonging to 
$\Delta_{(0,0,3)}$ and $\Delta_{(2,0,3)}$, marked by $X$s and $o$s.

Now we turn to the proof of the statement about the correctness of the extension of $\varphi$.
Fix a 
 $\Delta_{(i_1,\ldots,i_k,1)}$.
For the sake of simplicity we assume that $(i_1,\ldots,i_k,0) <_4 (i_1,\ldots,i_k,2)$ (the other case can be treated analogously). 
Extending the sequence $(i_1,\ldots,i_k,0)$ successively, we can uniquely define a sequence $\iota^{(0)}\in \{0,2,3\}^{\infty}$ such that
for any $k'>k$ its starting slice $\iota^{(0)}_{k'}$ is the largest with respect to $<_4$ among the elements of $\{0,2,3\}^{k'}$ 
starting with $i_1,\ldots,i_k,0$. 
Similarly, we can uniquely define a sequence $\iota^{(2)}\in \{0,2,3\}^{\infty}$ such that
for any $k'>k$ its starting slice $\iota^{(2)}_{k'}$ is the smallest with respect to $<_4$ among the elements of $\{0,2,3\}^{k'}$ 
starting with $i_1,\ldots,i_k,2$.
Thus, $\Delta_{\iota^{(0)}_{k'}}\cap\Delta_{\iota^{(2)}_{k'}}$ is the same set for every $k'>k$, and its intersection with $\overline{AB}$ is a set with one element,  
namely it is $\overline{AB}\cap\bigcap_{k'=k+1}^\infty \Delta_{\iota^{(0)}_{k'}} = \overline{AB}\cap \bigcap_{k'=k+1}^\infty \Delta_{\iota^{(2)}_{k'}}$.
Moreover,
\begin{equation*}%\label{jo csucsok}
\begin{gathered}
\Delta_{(i_1,\ldots,i_k,1)}\cap\bigcup_{\iota\in\{0,1,2,3\}^{k+1}\setminus(i_1,\ldots,i_k,1)}\Delta_\iota 
\subset \bigcap_{k'=k+1}^\infty \Delta_{\iota^{(0)}_{k'}} \cup \bigcap_{k'=k+1}^\infty \Delta_{\iota^{(2)}_{k'}}
\end{gathered}
\end{equation*}
(this is obvious for $k=0$, and for larger $k$s it can be obtained using the self-similarity of the IFS). 

\begin{claim}\label{clkonstans}
Let $m_0\in\N$ and $\iota_1,\ldots\iota_{m_0}\in\{0,1,2,3\}^{k_*}$. 
We claim that
\begin{equation}\label{konstans} \text{$\varphi|_{\Delta_{\iota_1,\ldots,\iota_{m_0}}}$ is constant if $\iota_{m'}\notin\cai$ for some $m'\in\{1,\ldots,m_0\}$}.
\end{equation}
\end{claim}

\begin{proof}[Proof of Claim \ref{clkonstans}]
If an $\iota_{m''}$ contains a digit $1$ for an $m''\in\{1,\ldots,m_0\}$, then the claim is obvious from \eqref{phi_def_2}. 

Otherwise (assuming that $m'$ is the smallest integer for which $\iota_{m'}\notin\cai$), the numerator in \eqref{phi_cai_def} is 
$$
(\#\cai)^{m-m'}\cdot \#\{(\iota'_1,\ldots,\iota'_{m'})\in\cai^{m'} :  (\iota'_1,\ldots,\iota'_{m'}) <_4 (\iota_1,\ldots,\iota_{m'})\}
$$ 
for every $m\ge m'$, %by the equality in \eqref{cai cdot}, 
hence $\varphi$ is constant on 
$$
\Delta'_{\iota_1,\ldots,\iota_{m_0}}:=\bigcup_{\iota_{m_0+1},\iota_{m_0+2},\ldots\in\{0,2,3\}^{k_*}}\bigcap_{m''=1}^\infty \Delta_{\iota_1,\ldots,\iota_{m''}}.
$$ 
As 
\begin{equation}\label{lefedi}
\varphi(\Delta_{\iota_1,\ldots,\iota_{m_0}}) = \varphi(\Delta'_{\iota_1,\ldots,\iota_{m_0}})
\end{equation} 
by \eqref{phi_def_2}, we are done. 
\end{proof}

\begin{remark}\label{nem konstans rem}
If $m\in\N$ and $(\iota_1,\ldots\iota_{m})$ is the $k$th element of $\cai^m$, 
then  \eqref{lefedi} and 
Remark \ref{remcasincr}
 imply that $\varphi(\Delta_{\iota_1,\ldots,\iota_m}) = \left[\frac k{(\#\cai)^m},\frac {k+1}{(\#\cai)^m}\right]$.
 \end{remark}

This concludes the construction of $\varphi$.

\medskip

Some important properties of $\varphi$ are summed up by the following claims, the first two concerning with how $k_*, w$ are related to the dimensions of the level sets
and to the regularity of $\varphi$, while the third concerning with the optimization of $k_*, w$ given this knowledge.

\begin{claim}\label{Holder claim}
If $\#\cai \ge 2^{(k_*+w)\alpha}$, then $\fff$ is H\"older-$\alpha$. %on $\Delta_0^*$.
%$\{x\in S_{\iota_{1},\lambda_{1}}\circ\ldots\circ S_{\iota_m,\lambda_m}(\Delta^*) : m\in\N \text{ and $\iota_{m}\notin\cai$}\}$.
\end{claim}
\begin{claim}\label{dimensions_claim}
	For almost every $r\in \varphi(\Delta)$, we have
	$$\udimb (\varphi^{-1}(r)) \leq \frac{w}{k_*+w}$$
\end{claim}
\begin{claim}\label{optimization_claim}
    We can choose $k_*, w$ so that $\#\cai \ge 2^{(k_*+w)\alpha}$ and 
\begin{equation}\label{optimization_eq}
	\frac{w}{k_*+w} \leq \frac{h^{-1}(\alpha)}{1+h^{-1}(\alpha)}+\eps.
\end{equation}
\end{claim}
Claim \ref{optimization_claim} implies that the level sets are as small as we proposed and $\varphi$ is sufficiently regular, respectively due to Claims \ref{dimensions_claim} and \ref{Holder claim}.
Thus what is left from the proof is verifying these claims.

\begin{proof}[Proof of Claim \ref{Holder claim}]
Before turning to the details of the proof here are some comments about the
function $\fff$, Figure  \ref{fig:phidef} can help to follow them. 
Using the coloring and notation of the left half of this figure we call level-$1$-red triangles
those which belong to $\{ \DDD_{\iota}:\iota\in \cai \}$, and the other ones are called level-$1$-blue.
Recall that
$\fff$ is constant on the level-$1$-blue triangles. The vertices $A,B,C$ of $\DDD$ are vertices of 
level-$1$-red triangles. 
 In case we remove the level-$1$-red triangles $\fff$ is constant on the remaining level-$1$-blue connected components. The distance between these level-$1$-blue components is larger than the height
of the smallest level-$1$-red triangle. Our function $\fff$ was defined in a way that at common vertices of level-$1$-red and level-$1$-blue triangles it is well defined. Now looking at the right half of Figure 
\ref{fig:phidef}
we see that in the interior of a level-$1$-red triangle there are level-$2$-red triangles  of $\{ \DDD_{\iota}:\iota\in \cai^2 \}$.
Some new level-$2$-blue triangles are also 
showing up. %, on connected components of them $\fff$ is again constant. 
Due to self similarity any vertex of a level-$1$-red triangle is the vertex of a level-$2$-red triangle. Using this fact, and the properties
of level-$1$-red triangles one can see again that in case we remove the level-$2$-red triangles $\fff$ is constant on the remaining level-$1$ and level-$2$-blue  connected components.
 The distance between these blue components is larger than the height
of the smallest level-$2$-red triangle. By induction analogous statements can be verified
for level-$n$ red and blue triangles.

Take $x,y\in\Delta$ for which $\varphi(x) < \varphi(y)$. 
Let $m\in\N$ be such that 
$$
\varphi(y)-\varphi(x)\in [3(\#\cai)^{-m},3(\#\cai)^{-(m-1)}).
$$
%Take $\iota_{1,x},\ldots,\iota_{m,x}$ and $\iota_{1,y},\ldots,\iota_{m,y}$ from $\{0,1,2,3\}^{k_*}$ such that $x\in\Delta_{\iota_{1,x},\ldots,\iota_{m,x}}$,  $y\in\Delta_{\iota_{1,y},\ldots,\iota_{m,y}}$. 
By Remark \ref{nem konstans rem}, the interval  $(\varphi(x),\varphi(y))$ contains $F:=\varphi(\Delta_{\iota_1,\ldots,\iota_m})$ for some $\iota_1,\ldots,\iota_m\in\cai$, that is $x\in\varphi^{-1}([0,\min F))$ and $y\in\varphi^{-1}((\max F, 1])$. 
This remark and Claim \ref{clkonstans} also imply that for every $(\iota'_1,\ldots,\iota'_m)\in\left(\{0,1,2,3\}^{k_*}\right)^m\setminus\{(\iota_1,\ldots,\iota_m)\}$ we have $\varphi(\Delta_{\iota'_1,\ldots,\iota'_m}) \subset[0,\min F]$ or $\varphi(\Delta_{\iota'_1,\ldots,\iota'_m}) \cap [0,\min F] = \emptyset$. 
Hence the set  $\Delta_{\iota_1,\ldots,\iota_m}$ ``separates'' $x$ and $y$ in the sense 
that if we delete it from $\Delta$ then $x$ and $y$ will be in different connected components of the remaining set. 
Fix $\iota_1,\ldots,\iota_m\in\cai$ and
$\lambda_1,\ldots,\lambda_m\in\{1,2\}^{k_*}$. 
Each of $\iota_1,\ldots,\iota_m$ contains at most $w$ digits different from $3$ by \eqref{cai def}. 
As $S_3$ has similarity ratio $\frac12$ and $S_{0,1},S_{0,2},S_{1,1},S_{1,2},S_{2,1},S_{2,2}$ have similarity ratio $\frac14$, %for every $\lambda_1,\ldots,\lambda_m\in\{1,\ldots,m\}^k$ 
we have \begin{equation}\label{atm nagy}
\diam(S_{\iota_1,\lambda_1}\circ\ldots\circ S_{\iota_m,\lambda_m}(\Delta)) \ge 2^{-m(k_*-w)-m\cdot2w} = 2^{-m(k_*+w)}.
\end{equation}

Thus 
$$
|x-y|  
%\underset{\eqref{atm nagy}}
\ge \frac{\sqrt3}2 2^{-m(k_*+w)}.
$$ %according to Claim \ref{szomszedos claim}. 

Hence, 
\begin{align*}
|\varphi(x)-\varphi(y)|^\alpha 
&\le
3^\alpha(\#\cai)^{\alpha (m-1)} \le \frac{3^\alpha(\#\cai)^{-(m-1)\alpha}}{\big(\frac{\sqrt3}2\big)^\alpha 2^{-m(k_*+w)\alpha}}|x-y|^\alpha \\
& = \left(\frac{\#\cai} {2^{(k_*+w)\alpha}}\right)^{-m\alpha} \cdot \left(\frac{6\#\cai}{\sqrt3}\right)^\alpha \cdot |x-y|^\alpha.
\end{align*}
By the assumption of the claim, this is at most $\left(\frac{6\#\cai}{\sqrt3}\right)^\alpha \cdot |x-y|^\alpha$, hence $\varphi$ is H\"older-$\alpha$.
\begin{comment}
$$
|x-y|^\alpha
\ge \left(\frac{\sqrt3}2 2^{(m+1)k_*}\right)^\alpha
$$

If $\Delta^*_{\iota_1(x),\ldots,\iota_m(x)}\cap \Delta^*_{\iota_1(y),\ldots,\iota_m(y)} = \emptyset$, then \begin{equation*}
\begin{split}
|x-y|^\alpha 
&\ge \left(\frac{\sqrt3}2 (\#\cai)^{-m}\right)^\alpha
= \left(\frac{\sqrt3}2\right)^\alpha \cdot (\#\cai)^{-\alpha} \cdot \diam(\fff(\Delta^*_{\iota_1,\ldots,\iota_{m-1}}))\\
&\ge \left(\frac{\sqrt3}2\right)^\alpha \cdot (\#\cai)^{-\alpha} \cdot |\fff(x)-\fff(y)|.
\end{split}
\end{equation*}

If $T_m(x)\cap T_m(y) \neq \emptyset$, then $\fff$ is constant either on $T_m(x)$ or on $T_m(y)$ again by the choice of $\cai$. 
We can assume that $\fff$ is constant on $T_m(x)$.
Let $\{x'\}:=T_m(x)\cap T_m(y)$. 
Then $\fff(x') = \fff(x)$, and $\iota_{m'}(x')=\iota_{m'}(y)$ for $m'\in\{1,\ldots,m\}$ while $\iota_{m+1}(x')\neq \iota_{m+1}(y)$.
As 
, hence
$$
|\fff(x)-\fff(y)|^\alpha
\ge 
$$

$\iota_(x)$

If $\iota_m(x)$ and $\iota$ are adjacent elements of
\end{comment}
\end{proof}

\begin{comment}
We turn to the definition of $k_*$ and $\cai$. 
%We would like to maximize $\frac w{k_*+w}$.
We need that 
\begin{equation*}
\begin{split}
\frac{w}{k_*+w} &< felso+\eps \\
w &< w(felso+\eps)+k_*(felso+\eps) \\
w - w(felso+\eps) &< k_*(felso+\eps) \\
w&< \frac{k_*(felso+\eps)}{1-(felso+\eps)}
\end{split}
\end{equation*}
Let $w' = \dfrac{k_*\left(felso+\frac\eps2\right)} {1-\left(felso+\frac\eps2\right)}$. 
As $felso+\eps<\frac12$, we have $w'<\dfrac{k_*(felso+\eps)}{1-(felso+\eps)}$.
%We choose $\cai$ such that $\iota\in\cai$ if and only if $p(\cai)=w$.
\end{comment}

\begin{proof}[Proof of Claim \ref{dimensions_claim}]
	The set 
	$$R_c:=\{r\in\R : \varphi^{-1}(r) \text{ contains $\Delta\cap T$ for some $T\in\tau_{n'}$ and $n'\in\N$}\}$$ 
	is obviously countable.
	By the construction of $\varphi$, for every $r\in \varphi(\Delta)\setminus R_c$, the level set can be written in the form $\bigcap_{m=1}^\infty
	\Delta_{\iota^r_1,\ldots,\iota^r_m}$
	for some $\big(\iota^r_{m}\big)_{m=1}^\infty\in\cai^{\infty}$. 
	Thus, for these $r$s we have that
	\begin{equation}%\label{kicsi a box}
		\begin{split}
			\udimb (\varphi^{-1}(r)) 
			%&= \limsup_{n\to\infty} \frac{\log \#\{T\in\tau_n : T\cap \varphi^{-1}(r)\cap \Delta \neq\emptyset\}} {\log 2^n} \\
			&\underset{\hphantom{\eqref{cai def}}}= \limsup_{n\to\infty} \frac{\log \#\{T\in\tau_{n(k_*+w)} : T\cap \varphi^{-1}(r) \neq\emptyset\}} {\log 2^{n(k_*+w)}} \\
			&\underset{\eqref{cai def}}\le \lim_{n\to\infty} \frac{\log 2^{nw}} {\log 2^{n(k_*+w)}}
			= \lim_{n\to\infty} \frac{nw} {n(k_*+w)}
			= \frac{w}{k_*+w}.
		\end{split}
	\end{equation}
\end{proof}

\begin{proof}[Proof of Claim \ref{optimization_claim}]
By Claim \ref{dimensions_claim}, using the notation $t:=\frac w{k_*}$ we need to minimize $\frac{w}{k_*+w}=\frac t{1+t}$ while not hurting the H\"older property. 
Note that minimizing $\frac t{1+t}$ is equivalent to minimizing $\frac w{k_*} = t$, which is equivalent to minimizing $g(t)$ for any strictly increasing positive function $g$ on $(0,1/2)$, and $h$ is such a function. 
This is how we will get to \eqref{optimization_eq}.
(Of course, the calculations will lead us to the definition of $h$, rather than the definition of $h$ motivating the calculations.)

As  $\#\cai \ge \binom{k_*}w \cdot 2^w$, for the condition on $\#\cai$ in Claim \ref{Holder claim} it suffices to have %which is equivalent to 
\begin{equation}\label{w kiszamitasa}
\begin{split}
2^{\alpha(k_*+w)} &\le \binom{k_*}w \cdot 2^w = \frac{k_*!}{(k_*-w)!\cdot w!} \cdot 2^w. \\
\end{split}
\end{equation}
By the Stirling formula, 
$$
k^{k+1/2}\cdot e^{-k}\le k! \le e\cdot k^{k+1/2}\cdot e^{-k}$$ 
for every $k\in\N$. Thus
%for large enough $k_*$ and $w$ we have 
\begin{align*}
\frac{k_*!}{(k_*-w)!\cdot w!} 
&\ge \frac{k_*^{k_*+1/2}\cdot e^{-k_*}} {e\cdot (k_*-w)^{k_*-w+1/2}\cdot e^{-(k_*-w)} \cdot e\cdot  w^{w+1/2}\cdot e^{-w}} \\
&= \frac{k_*^{k_*+1/2}} {e^2\cdot (k_*-w)^{k_*-w+1/2} \cdot w^{w+1/2}}.
\end{align*}
Writing this into \eqref{w kiszamitasa} and taking the base 2 logarithm of both sides, we obtain that it is enough to have
\begin{equation}\label{alpha(...}
\begin{split}
\alpha(k_*+w) 
\le& -\log_2(e^2) + (k_*+1/2)\log_2(k_*) \\
&- (k_*-w+1/2)\log_2(k_*-w) - (w+1/2)\log_2(w) + w \\
\alpha\Big(1+\frac w{k_*}\Big) 
\le& \frac{-\log_2(e^2)}{k_*} + \Big(1+\frac1{2k_*}\Big)\log_2(k_*) \\
&- \Big(1-\frac w{k_*}+\frac1{2k_*}\Big)\log_2(k_*-w) - \Big(\frac w{k_*}+\frac1{2k_*}\Big)\log_2(w) + \frac{w}{k_*}.
\end{split}
\end{equation}
For any $\eps'>0$ we can take a large enough $k_*$ for which  
$$
\frac1{k_*}\left|-\log_2(e^2)+\frac12\log_2(k_*)-\frac12\log_2(k_*-w)-\frac12\log_2(w)\right|\le\eps'. 
$$
Thus, \eqref{alpha(...} follows from
\begin{equation*}
\begin{split}
\alpha\Big(1+\frac w{k_*}\Big) 
\le& \log_2(k_*) - \Big(1-\frac w{k_*}\Big)\log_2(k_*-w) - \Big(\frac w{k_*}\Big)\log_2(w) + \frac{w}{k_*} -\eps'\\
\alpha\Big(1+\frac w{k_*}\Big) 
\le& \log_2(k_*) - \Big(1-\frac w{k_*}\Big)\left(\log_2\Big(1-\frac w{k_*}\Big)+\log_2(k_*)\right) \\
 &- \Big(\frac w{k_*}\Big)\left(\log_2\Big(\frac w{k_*}\Big)+\log_2(k_*)\right) + \frac{w}{k_*} -\eps' \\
\alpha\Big(1+\frac w{k_*}\Big) 
\le& - \Big(1-\frac w{k_*}\Big)\log_2\Big(1-\frac w{k_*}\Big) - \Big(\frac w{k_*}\Big)\log_2\Big(\frac w{k_*}\Big) + \frac{w}{k_*} -\eps'.
%\frac{w}{k_*} &\ge \alpha\Big(1+\frac w{k_*}\Big) +\Big(1-\frac w{k_*}\Big)\log_2\Big(1-\frac w{k_*}\Big) + \Big(\frac w{k_*}\Big)\log_2\Big(\frac w{k_*}\Big) + \eps' \\
%\frac{\frac{w}{k_*}}{1+\frac{w}{k_*}} &\ge \alpha + \frac{\Big(1-\frac w{k_*}\Big)\log_2\Big(1-\frac w{k_*}\Big) + \Big(\frac w{k_*}\Big)\log_2\Big(\frac w{k_*}\Big) + \eps'}{1+\frac{w}{k_*}} \\
\end{split}
\end{equation*}
Using the notation $t=\frac w{k_*}$ this is equivalent to 
\begin{align*}
\alpha(1+t) 
&\le - (1-t)\log_2(1-t) - t\log_2(t) + t -\eps' \nonumber\\
\alpha &\le \frac{-(1-t)\log_2(1-t) - t\log_2(t) + t-\eps'}{1+t} \nonumber,
\end{align*}
which follows from
\begin{equation}\label{alpha+eps}
\alpha+\eps' %\overset{\big\Uparrow}
\le \frac{-(1-t)\log_2(1-t) - t\log_2(t) + t}{1+t}.
\end{equation}

Observe that the right hand-side is $h(t)$ from the statement of the lemma. 

If $\eps'$ is small enough, then using the continuity of $h$ we obtain that $h^{-1}(\alpha+\eps')$ is close enough to $h^{-1}(\alpha)$ to satisfy 
\begin{equation*}
\frac{h^{-1}(\alpha+\eps')}{1+h^{-1}(\alpha+\eps')}
< \frac{h^{-1}(\alpha)}{1+h^{-1}(\alpha)}+\eps.
\end{equation*}
We can choose arbitrarily large $k_*$ and $w$ such that $t- h^{-1}(\alpha+\eps')$ is an arbitrarily small positive number, hence $t$ satisfies \eqref{alpha+eps} (since $h$ is strictly increasing) and 
\begin{equation*}
\frac{t}{1+t}
\le \frac{h^{-1}(\alpha)}{1+h^{-1}(\alpha)}+\eps,
\end{equation*}
as claimed.
\end{proof}
This also concludes the proof of Lemma \ref{lemma:upper_estimate}.
\end{proof}

%%%%%%%%%%%%%%%%%%%%%%%%%%%%%%%%
%%%%%%%%%%%%%%%%%%%%%%%%%%%%%%%%
%%%%%%%%%%%%%%%%%%%%%%%%%%%%%%%%
%%%%%%%%%%%%%%%%%%%%%%%%%%%%%%%%
%%%%%%%%%%%%%%%%%%%%%%%%%%%%%%%%
%%%%%%%%%%%%%%%%%%%%%%%%%%%%%%%%

\section{A connected fractal with phase transition} \label{sec:phase_transition}

We start with the construction of the fractal, see Figure \ref{fig:cross_fractal}.

\begin{notation} \label{not:cross_like_fractal}
    Consider the tiling of the unit square $[0, 1]^2$ by closed squares of side length $2^{-m}$. 
    Let $\boxplus_1=\boxplus_1(m)$ 
    be the set remaining from $[0, 1]^2$ after omitting
    certain squares of this partition.
    (We will suppress $m$ in the notation, as it will be fixed during our arguments.)
    Notably, we omit a square if its boundary intersects at least one of the midsegments of $[0, 1]^2$, but does not intersect the
    sides of $[0, 1]^2$. (See Figure \ref{fig:cross_fractal}.)
    The set of remaining squares is denoted by $\mathcal{F}_{1}=\mathcal{F}_{1}(m)$.
    Consider the set of similarities mapping $[0, 1]^2$ 
    to the small squares constituting $\mathcal{F}_{1}$.
    These similarities give rise to an iterated function system. We will denote its attractor by $\boxplus =\boxplus(m)$. 
    The cardinality of $\mathcal{F}_{1}$ will be denoted by $p=p(m)$,
    while the set of small squares constituting the $n$th level of the self-similar construction,
    will be denoted by $\mathcal{F}_{n}=\mathcal{F}_{n}(m)$, 
    and 
    $\mathcal{F} = \bigcap_{n=1}^{\infty}\mathcal{F}_{n}$.   (The notation is extended
    by $\mathcal{F}_{0} = \{[0, 1]^2\}$.)
    The union of all the vertices of squares in $\mathcal{F}_n$ is denoted by $V(\mathcal{F}_n)$, while 
    $V=\bigcup_{n=1}^{\infty}V(\mathcal{F}_n)$.

\end{notation}

\begin{figure}[h] 
    \centering
    \includegraphics[scale=0.3]{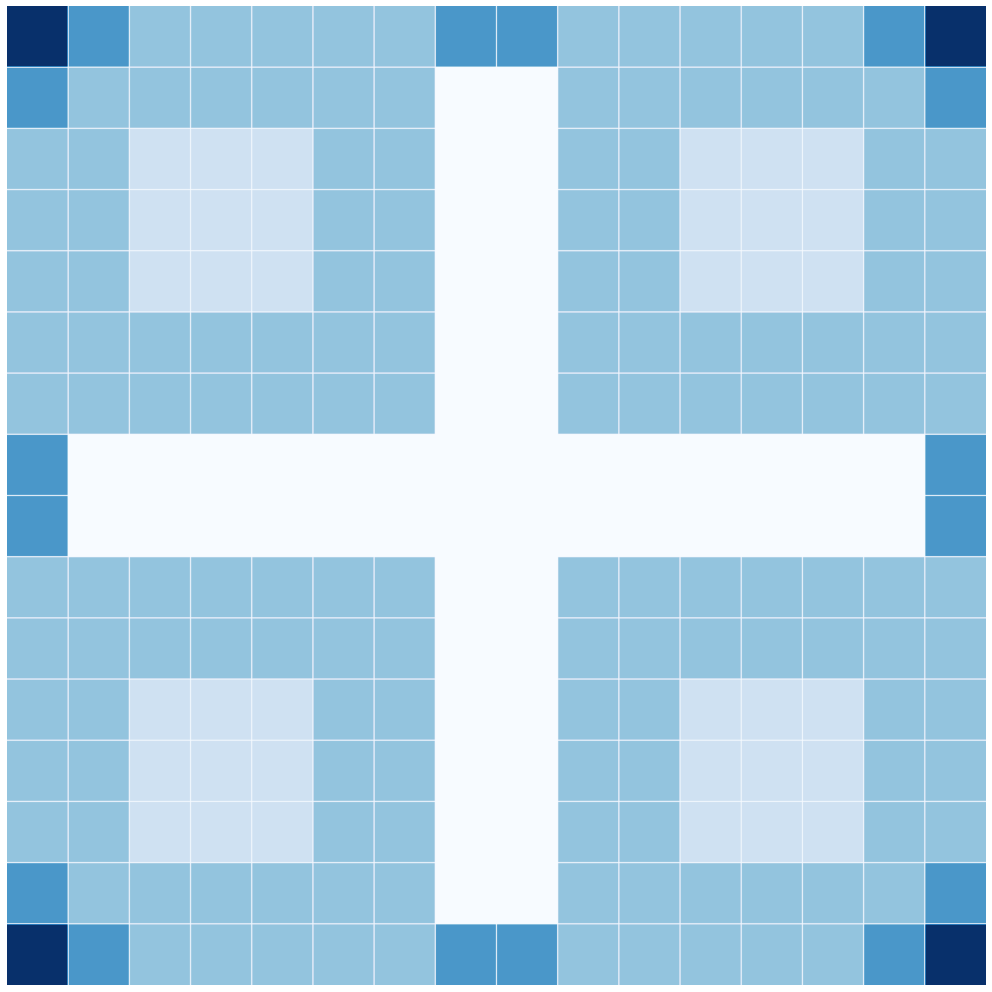}

    \caption{A visualization of $\boxplus_1(4)$, the small squares forming the white cross are omitted from the large square, while
    the others are retained.
    The use of different shades visualizes the proof of Theorem \ref{thm:cross_large_alpha}
    with choice $L=4$:
    gradully lighter shades correspond to higher square type and hence smaller conductivity, introduced in that proof.
    One should note that for fixed $L$, increasing $m$ in $\boxplus_1(m)$ results in the lightest shade, that is Type 4 squares
    dominating the figure, having paramount importance in that proof. \label{fig:cross_fractal}}
\end{figure}

Note that if $Q\in \mathcal{F}_{n}$ for $n>0$, there exists a unique $Q'\in \mathcal{F}_{n-1}$ with $Q\subseteq Q'$. 
We say that $Q$ is a vertical (resp. horizontal) {\it thin square} of $Q'$
if $Q$ falls on a vertical (resp. horizontal) section of $Q'$ which intersects only two subsquares of $Q'$ in $\mathcal{F}_{n}$. 
Otherwise we say that $Q$ is a {\it thick base square}.
Note that the thick base squares of $Q'$ form 4 large squares, to which we refer as {\it thick square}s of $Q'$. 
Now if $Q$ is a thick base square, and $Q\subseteq Q_0$ for 
the thick square $Q_0\subseteq Q'$, we say that $Q$ is of {\it depth} $l$ if its distance from $\partial Q_0$ is at least 
$2^{-nm} (l-1)$. It corresponds to the fact that following a continuous path from such a square, one must traverse at least $l-1$ other thick base squares
to reach the boundary of $Q_0$.

We put $\boxplus_{\text{thin, hor}}$ for the self-similar set which is obtained using only the similarities given by horizontal thin squares. If we rotate it
by $\pi/4$ around the center of $[0,1]^2$, we obtain $\boxplus_{\text{thin, ver}}$.

Now our aim is to prove that for large enough $m$, on $\boxplus=\boxplus(m)$ we encounter the phenomenon of phase transition, 
being the first
connected example we could come up with. To this end, first we have to introduce notions which are more
specific to this fractal, in a similar manner as we did in Section \cite[Sections~4-5]{sierc} in which we carried out
a more detailed analysis of the Sierpiński triangle. The first definition is the analogue of \cite[Definition~4.1]{sierc}:

\begin{definition}
    We say that $f : \boxplus \to \mathbb{R}$ is a {\it piecewise affine function at level $n \in \mathbb{N}$} on
    $\boxplus$ if it is affine on any $Q \in \mathcal{F}_{n}$.

    If a piecewise affine function at level $n \in \mathbb{N}$ on
    $\boxplus$ satisfies the property that for any $Q \in \mathcal{F}_{n}$ one can always find adjacent vertices of $Q$ where
    $f$ takes the
    same value, then we say that $f$ is a {\it standard piecewise affine function at level $n$}. (Note that this property yields
    that restricted to $Q$, either $f(x, y)=h(x)$ or $f(x, y)=h(y)$ for some $h:\mathbb{R}\to\mathbb{R}$ affine function.)
    
    Finally, $f$ is a {\it strongly piecewise affine function} if it is a piecewise affine function at level $n$ for some $n$.
\end{definition}

The following lemma is the analogue of \cite[Lemma~4.2]{sierc}:

%%%%%%%%%%%%%%%%%%%%%%%%%%%%%%%%
%%%%%%%%%%%%%%%%%%%%%%%%%%%%%%%%
%%%%%%%%%%%%%%%%%%%%%%%%%%%%%%%%

\begin{lemma} \label{lemma:piecewise_dense}
    Standard strongly piecewise affine $c^{-}$-Hölder-$\alpha$ functions defined on $\boxplus$ form a dense subset of the $c^{-}$-Hölder-$\alpha$ functions.
\end{lemma}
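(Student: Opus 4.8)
The plan is to follow the blueprint of \cite[Lemma~4.2]{sierc}, where the analogous density statement is established for the Sierpi\'nski triangle, but with one genuinely new ingredient: in the present setting two squares of $\mathcal{F}_n$ may share an entire edge, whereas two triangles of $\tau_n$ meet in at most a vertex, so requiring a function to be affine \emph{and} to depend on a single coordinate on every $Q\in\mathcal{F}_n$ now imposes a real compatibility condition between neighbouring squares.

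First I would reduce to Lipschitz data. If $f$ is $c^{-}$-H\"older-$\alpha$ then $f\in C^{\alpha}_{c'}(\boxplus)$ for some $c'<c$, and by Lemma \ref{lipschitzapprox} the Lipschitz functions in $C^{\alpha}_{c'}(\boxplus)$ are dense there, so it suffices to approximate, uniformly and by standard strongly piecewise affine functions lying in $\bigcup_{c'\le c''<c}C^{\alpha}_{c''}(\boxplus)$, a Lipschitz $f\in C^{\alpha}_{c'}(\boxplus)$. Fix such an $f$, with Lipschitz constant $\Lambda$, and fix $\eps>0$.

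Next I would fix a level $n$ and build a candidate $g$ which is standard piecewise affine at level $n$. Writing $\sigma=2^{-nm}$ for the side length of the squares in $\mathcal{F}_n$, uniform continuity gives $\mathrm{osc}(f,Q)\le c'\,|Q|^{\alpha}$ for every square $Q$ occurring at any level of the construction of $\boxplus$, in particular $\mathrm{osc}(f,Q)\le c'\sigma^{\alpha}$ for $Q\in\mathcal{F}_n$. On each $Q\in\mathcal{F}_n$ I would take $g|_{Q}$ to be either of the form $h_{Q}(x)$ or of the form $h_{Q}(y)$ with $h_{Q}$ a suitable one-dimensional affine function close to $f$, the choice of the distinguished coordinate being prescribed by a fixed rule on the adjacency graph of the squares of $\mathcal{F}_n$ lying in $\boxplus$. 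The rule has to keep $g$ continuous: along a shared vertical edge two pieces of the same ``$x$-type'' (or two of ``$y$-type'') automatically agree once their common vertex values do, whereas along a horizontal edge a varying $x$-type piece can only abut a $y$-type piece if it is constant there, and symmetrically. These forced constancies I would realise by declaring $g$ constant on a sparse separating family of squares -- which is legitimate since a constant function is simultaneously of $x$-type and of $y$-type -- using the explicit thin/thick decomposition of $\mathcal{F}_n$ inside $\boxplus$ (thin squares come in one-wide strips, the four thick squares of a given square are solid blocks) to arrange that every maximal run of ``varying'' pieces is internally compatible. The remaining free parameters, the edge values of the one-dimensional pieces, are then chosen so that $g$ interpolates $f$ along a spanning subgraph of $V(\mathcal{F}_n)$; since each local departure of $g$ from a nearby value of $f$ is $O(\sigma^{\alpha})$, this gives $\|f-g\|_{\infty}=O(\sigma^{\alpha})<\eps$ for $n$ large, and $g$ is standard strongly piecewise affine by construction.

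Finally there is the H\"older estimate for $g$, which is the heart of the matter and the step I expect to be the main obstacle. If $x,y$ lie in the same $Q\in\mathcal{F}_n$ it is immediate: the affine function $g|_{Q}$ has gradient of norm at most $\mathrm{osc}(g,Q)/\sigma\le c'\sigma^{\alpha-1}$, hence $|g(x)-g(y)|\le c'\sigma^{\alpha-1}|x-y|\le c'|x-y|^{\alpha}$ whenever $|x-y|\le\sigma$. The delicate case is $x,y$ in distinct squares. The crude bound -- cover the pair by boundedly many adjacent pieces of the largest level $\ell\le n$ at which $x$ and $y$ still lie in the same or in neighbouring $\mathcal{F}_{\ell}$-pieces, and combine $|x-y|\gtrsim 2^{-\ell m}$ with $\mathrm{osc}(g,\cdot)\lesssim c'\,2^{-\ell m\alpha}$ over such a neighbourhood -- only yields $|g(x)-g(y)|\le C\,c'|x-y|^{\alpha}$ with a fixed geometric constant $C>1$. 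That is perfectly adequate for the Hausdorff- and box-dimension bounds of the earlier sections, but here it is fatal: when $c'$ is close to $c$, the constant $Cc'$ may exceed $c$, so $g$ need not be $c^{-}$-H\"older-$\alpha$. To obtain the sharp constant one must arrange the construction so that the passage from $f$ to the one-coordinate affine function $g$ inflates the H\"older constant only by a factor tending to $1$ -- concretely, that $g$ tracks $f$ not merely in the supremum norm but on every scale between $\ell$ and $n$, the accumulated discrepancy being H\"older-negligible as $n\to\infty$. This is where the self-similar structure of $\boxplus$ and the thin/thick geometry must be used carefully: for each relevant scale one needs that nearby points of $\boxplus$ are joined through only boundedly many squares along which $g$ is already affine, which in the Sierpi\'nski case reduces to vertex-sharing but here requires the full geometry of $\boxplus$. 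Granting this refined estimate, $g\in C^{\alpha}_{c''}(\boxplus)$ for some $c''<c$, and together with $\|f-g\|_{\infty}<\eps$ this shows that standard strongly piecewise affine $c^{-}$-H\"older-$\alpha$ functions are dense.
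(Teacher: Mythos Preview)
Your reduction to Lipschitz data and the overall three-step outline are correct, and you correctly locate the difficulty in the H\"older estimate. But the proof as written has a genuine gap precisely there: you explicitly acknowledge that your ``crude bound'' only gives $|g(x)-g(y)|\le Cc'|x-y|^{\alpha}$ with a geometric constant $C>1$, call this ``fatal'', and then write ``Granting this refined estimate\ldots'' without supplying one. The vague appeal to tracking $f$ ``on every scale between $\ell$ and $n$'' is not a mechanism, and in fact your construction cannot be repaired along those lines: because you want $g$ to interpolate $f$ at vertices of $\mathcal{F}_n$ and be genuinely affine on each square, the slope of $g$ on a square of side $\sigma=2^{-nm}$ is of order $c'\sigma^{\alpha-1}$, so the Lipschitz constant of $g$ blows up with $n$ and there is no $n$-independent scale below which the Lipschitz bound alone yields the sharp H\"older inequality.

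The paper sidesteps both this problem and your elaborate compatibility scheme with a much cruder approximation. For $Q\in\mathcal{F}_n$, set $\tilde f$ to be \emph{constant}, equal to $f(v)$, on each of the four thick subsquares of $Q$ (where $v$ is the vertex of $Q$ that the thick square contains), and linearly interpolate across the two thin squares bridging adjacent thick squares. This is automatically continuous and standard piecewise affine at level $n+1$; no compatibility argument is needed. The crucial gain is that the slope on a thin bridge is at most
\[
\frac{|f(v_1)-f(v_2)|}{2\cdot 2^{-(n+1)m}}\le \frac{M\cdot 2^{-nm}}{2\cdot 2^{-(n+1)m}}=2^{m-1}M=:\tilde M,
\]
which is \emph{independent of $n$}. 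Hence $|\tilde f(x)-\tilde f(y)|\le \tilde M|x-y|\le c''|x-y|^{\alpha}$ whenever $|x-y|\le(c''/\tilde M)^{1/(1-\alpha)}$, a threshold not depending on $n$. For $|x-y|$ above this fixed threshold one uses that $\tilde f=f$ on $V(\mathcal{F}_n)$: choosing nearby vertices $x',y'$ gives $|\tilde f(x)-\tilde f(y)|\le 2\tilde M\cdot 2^{-nm}+c'(|x-y|+2\cdot 2^{-nm})^{\alpha}\to c'|x-y|^{\alpha}$ as $n\to\infty$, so $\tilde f$ is $c''$-H\"older-$\alpha$ for any $c''\in(c',c)$ once $n$ is large. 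The moral is that you should not try to make the piecewise affine approximation faithful on every square; making it constant on the thick squares costs nothing in sup-norm and buys you an $n$-free Lipschitz constant, which is exactly what the sharp H\"older estimate needs.
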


\begin{proof}
    Due to Lemma \ref{lipschitzapprox}, it suffices to prove that for any  Lipschitz $c^{-}$-Hölder-$\alpha$ function, $f$ we can
    construct standard strongly piecewise affine $c^{-}$-Hölder-$\alpha$ functions arbitrarily close to $f$. 
    Let $M$ be the Lipschitz constant and $c'<c$ be the Hölder constant of $f$.
    
    The construction is the following:
    for some $n$ to be fixed later, let $Q\in\mathcal{F}_{n}$ be arbitrary. 
    If $Q'$ is a thick square of $Q$, such that they share the vertex $v$, 
    let $\tilde{f}|_{Q'\cap \boxplus} \equiv f(v)$. On the thin squares,
    we linearly interpolate from the two closest thick base squares on which $\tilde{f}$ is already defined. As $f$ is $M$-Lipschitz, all these
    linear parts have slope at most $2^{m-1}M$, which yields that $\tilde{f}$ is $\tilde{M}$-Lipschitz for $\tilde{M}=2^{m-1}M$.
	%(The extra factor of $\sqrt 2$ comes from the fact that $\tilde{f}$ is defined on a subset of $\mathbb{R}^2$.) 
    Notice further that the resulting $\tilde{f}$
    is standard piecewise affine at level $n+1$. Moreover, as $f$ is uniformly continuous,
    by
    choosing $n$ large enough $\tilde{f}$ can be arbitrarily close to $f$.
	Hence it suffices to check that it is $c$-Hölder-$\alpha$ as well for large enough $n$. 

    Fix $c''\in (c', c)$. As $\tilde{f}$ is $\tilde{M}$-Lipschitz, for $x,y \in F_n$, we have
    $$|\tilde{f}(x) - \tilde{f}(y)|\leq \tilde{M}|x-y| = \tilde{M}|x-y|^{\alpha}|x-y|^{1-\alpha} \leq c''|x-y|^{\alpha},$$
    if $|x-y|\leq \left(\frac{c''}{\tilde{M}}\right)^\frac{1}{1-\alpha}$. 
    That is, if $x,y$ are close enough, the desired Hölder bound holds, where the sufficient proximity is independent of $n$.
    Thus we only have to handle $x,y$ with $|x-y|$ being separated from 0 by a fixed distance. We proceed as follows (analogously to the end of the proof of \cite[Lemma~4.4]{sier}):
    let $x', y'$ be arbitrary vertices of squares $Q_x, Q_y \in \mathcal{F}_{n}$ so that $x\in Q_x$ and $y\in Q_y$. Then
    $$|\tilde{f}(x) - \tilde{f}(y)| \leq |\tilde{f}(x) - \tilde{f}(x')| + 
    |\tilde{f}(y) - \tilde{f}(y')| + |\tilde{f}(x') - \tilde{f}(y')|,$$
    where the first two terms can be estimated by using the Lipschitz property of $\tilde{f}$, while the last term can be estimated
    by using the Hölder property of $f$, as $f\equiv \tilde{f}$ on vertices of squares in $\mathcal{F}_n$. Notably, we obtain
    $$|\tilde{f}(x) - \tilde{f}(y)| \leq 2\tilde{M}\cdot 2^{-nm} + c'(x-y + 2\cdot 2^{-nm})^{\alpha},$$
    where the right hand side tends to $c'(x-y)^{\alpha}$ as $n$ tends to infinity. Thus $\tilde{f}$ is $c''$-Hölder-$\alpha$ for
    large enough $n$, which concludes the proof.
\end{proof}

We will also construct a function $\Phi$ for which $D_{\underline{B}*}^{\Phi}(\boxplus)\leq \frac{1}{m}$ (and hence $D_{*}^{\Phi}(\boxplus)\leq \frac{1}{m}$) holds. This is going to be
the building block of a dense set of functions with the same $D_{\underline{B}*}, D_{*}$ bounds.

Let $\sum_{j=1}^{\infty}i_j(y)2^{-mj}$ be the base $2^m$ representation of $y$, and let $A$ be the set of numbers in which this
representation uses digits $2^{m-1}-1, 2^{m-1}$ exclusively. Then $A$ is a nowhere dense, perfect set, which in fact coincides with $\boxplus_{\text{thin, hor}}$'s projection to the $x$ axis,
thus it is strongly related to thin parts of $\boxplus$.
For $x\in A$, let
$$\varphi(x)=\sum_{j=1}^{\infty} (i_j(x) - (2^{m-1}-1))2^{-j}.$$
Now $\varphi$ is a monotone function defined on $A$ and takes the same values on endpoints of intervals contiguous to $A$. 
Hence it can be extended
to $[0, 1]$ continuously. Let $\varphi$ denote the extension as well. Clearly $\varphi([0,1])=[0,1]$, and it is straightforward to check that
$\varphi$ is $c_{\varphi}$-Hölder-$\frac{1}{m}$ for some $c_{\varphi}>1$. Consequently, if $\Phi(x, y) = \varphi(x)$, 
then $a\Phi(x, y)+b$ is 
1-Hölder-$\frac{1}{m}$ if $|a|<c_{\varphi}$.

%%%%%%%%%%%%%%%%%%%%%%%%%%%%%%%%
%%%%%%%%%%%%%%%%%%%%%%%%%%%%%%%%
%%%%%%%%%%%%%%%%%%%%%%%%%%%%%%%%

\begin{lemma}\label{a_neq_lemma}
    If $a\neq 0$, then $D_{\underline{B}*}^{a\Phi+b}(\boxplus)= \frac{1}{m}$.
\end{lemma}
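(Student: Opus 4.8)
The plan is to describe the level sets of $f:=a\Phi+b$ outright. Since $\Phi(x,y)=\varphi(x)$, for every $r\in\R$ the level set
\[
f^{-1}(r)=\{(x,y)\in\boxplus:\varphi(x)=s\}=\big(\varphi^{-1}(s)\times[0,1]\big)\cap\boxplus,\qquad s:=\tfrac{r-b}{a},
\]
is a \emph{vertical section} of $\boxplus$. As $\varphi([0,1])=[0,1]$, this is nonempty exactly when $s\in[0,1]$, that is, for $r$ ranging over an interval $I$ of length $|a|>0$; for $r\notin I$ it is empty and $\ldimb f^{-1}(r)=0$. I would then use the structure of $\varphi$: it is monotone, continuous, and constant precisely on the intervals contiguous to $A$, where (as for any Cantor-type function) it takes dyadic rational values. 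Hence $\varphi^{-1}(s)$ is a single point $x_s\in A$ for every non-dyadic $s$, and the closure of a contiguous interval of $A$ for every dyadic $s$. Therefore, for all but countably many $r\in I$,
\[
f^{-1}(r)=\{x_s\}\times V_{x_s},\qquad V_{x_s}:=\{y\in[0,1]:(x_s,y)\in\boxplus\},
\]
so that $\ldimb f^{-1}(r)=\ldimb V_{x_s}$.

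The geometric heart of the proof is to identify $V_{x_0}$ for a point $x_0\in A$. All base-$2^m$ digits of $x_0$ lie in $\{2^{m-1}-1,2^{m-1}\}$, and by the cross-removal rule of Notation \ref{not:cross_like_fractal} these two digits label exactly the two columns of the $2^{-m}$-grid adjacent to the vertical midsegment of $[0,1]^2$; inside such a column every square other than the bottom one and the top one is deleted, since its boundary meets the midsegment but not a side of $[0,1]^2$. No point of $A$ has a terminating base-$2^m$ expansion, so the line $\{x=x_0\}$ meets the interior of a unique square of $\mathcal F_n$ at each level, and by self-similarity that square is always a rescaled copy of such a ``thin'' column. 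Consequently $V_{x_0}=\bigcap_{n\ge0}K_n$, where $K_0=[0,1]$ and $K_{n+1}$ is obtained from $K_n$ by replacing each of its intervals by its bottom and top subintervals of length $2^{-m}$ times that of the interval; equivalently, $V_{x_0}$ is the attractor of a two-map iterated function system with ratio $2^{-m}$ satisfying the open set condition. Hence
\[
\dim_H V_{x_0}=\ldimb V_{x_0}=\udimb V_{x_0}=\frac{\log 2}{\log 2^m}=\frac1m
\]
(equivalently, $K_n$ is a union of $2^n$ intervals of length $2^{-mn}$, and the intermediate scales are controlled in the standard way). Thus $\ldimb f^{-1}(r)=\tfrac1m$ for all but countably many $r\in I$.

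Finally I would read off the conclusion from the definition of $D_{\underline{B}*}^{f}$. For any $d\le\tfrac1m$, the set $\{r:\ldimb f^{-1}(r)\ge d\}$ contains the non-dyadic part of $I$, which has Lebesgue measure $|a|>0$. Conversely, $\{r:\ldimb f^{-1}(r)>\tfrac1m\}$ is contained in the set of $r$ for which $s\in[0,1]$ is dyadic (there the level set is a genuine vertical strip of $\boxplus$, whose dimension need not be estimated), which is countable; hence $\lambda\{r:\ldimb f^{-1}(r)\ge d\}=0$ for every $d>\tfrac1m$. By definition this yields $D_{\underline{B}*}^{a\Phi+b}(\boxplus)=\tfrac1m$. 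Only $a\ne0$ is needed here: the size of $|a|$ affects the H\"older constant of $a\Phi+b$ but is irrelevant to $D_{\underline{B}*}^{f}$.

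The one step that genuinely requires care is the inductive claim that the line $\{x=x_0\}$ meeting a ``thin'' column at level $n$ forces it to meet one at level $n+1$, so that $V_{x_0}$ is truly self-similar with two branches all the way down; the remaining ingredients — preimages under a Cantor-type function and the box count of a regular self-similar Cantor set — are routine bookkeeping.
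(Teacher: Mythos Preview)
Your proof is correct and follows essentially the same approach as the paper's: both identify almost every level set of $a\Phi+b$ as a vertical section $(\{x_0\}\times[0,1])\cap\boxplus$ for some $x_0\in A$, and then recognize this section as a self-similar Cantor set generated by two maps of ratio $2^{-m}$, yielding box dimension $\tfrac1m$. Your write-up is somewhat more explicit---you spell out why no point of $A$ lies on a grid line (so the thin-column structure persists at every scale) and you verify the conclusion directly from the definition of $D_{\underline{B}*}^f$---whereas the paper simply invokes $\boxplus_{\text{thin,hor}}$ and its vertical sections; but the underlying argument is the same.
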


\begin{proof}
    It is easy to see that $\varphi$ only admits countably many values on $[0,1]\setminus A$, and $\varphi$ is strictly increasing on $[0,1]\setminus\varphi^{-1}([0,1]\setminus A)$, hence almost every level set is of the form
    $(\{r\}\times [0,1]) \cap \boxplus$ for some $r\in A$. However, in this intersection $\boxplus$ can be replaced by $\boxplus_{\text{thin, hor}}$ for almost every $r \in A$ (the exceptional $r$s being
    the endpoints of contiguous intervals),
    and each vertical section of $\boxplus_{\text{thin, hor}}$ is the same
    self-similar set determined by 2 similarities with ratio
    $2^{-m}$, satisfying the strong separation condition. Consequently, the box dimension of almost every level set is $\frac{1}{m}$.
\end{proof}

%%%%%%%%%%%%%%%%%%%%%%%%%%%%%%%%
%%%%%%%%%%%%%%%%%%%%%%%%%%%%%%%%
%%%%%%%%%%%%%%%%%%%%%%%%%%%%%%%%
%%%%%%%%%%%%%%%%%%%%%%%%%%%%%%%%

\begin{theorem} \label{thm:cross_small_alpha}
$D_{*}(\alpha, \boxplus) = D_{\underline{B}*}(\alpha, \boxplus) = \frac{1}{m}$
for $0<\alpha<\frac{1}{m}$. 
\end{theorem}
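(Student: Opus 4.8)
The plan is to prove the two inequalities $D_{\underline{B}*}(\alpha,\boxplus)\le\tfrac1m$ and $D_*(\alpha,\boxplus)\ge\tfrac1m$; since $D_*^g(\boxplus)\le D_{\underline{B}*}^g(\boxplus)$ for every $g$ (and hence $D_*(\alpha,\boxplus)\le D_{\underline{B}*}(\alpha,\boxplus)$ too), these together give the chain $\tfrac1m\le D_*(\alpha,\boxplus)\le D_{\underline{B}*}(\alpha,\boxplus)\le\tfrac1m$ and so equality everywhere. The substance is in the upper bound.

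For the upper bound I would imitate the way \cite[Theorem~4.5]{sierc} is deduced from \cite[Lemma~4.4]{sierc}: build a dense set of functions $g\in C_1^\alpha(\boxplus)$ with $D_{\underline{B}*}^g(\boxplus)\le\tfrac1m$ and then feed a countable dense subfamily of it into Lemma \ref{*lemdfb} (with $\cau=C_1^\alpha(\boxplus)$ and $E=\boxplus$) to obtain a dense $G_\delta$ set on which $D_{\underline{B}*}^f(\boxplus)\le\tfrac1m$, which yields $D_{\underline{B}*}(\alpha,\boxplus)\le\tfrac1m$, and a fortiori $D_*(\alpha,\boxplus)\le\tfrac1m$. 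By Lemma \ref{lipschitzapprox} and Lemma \ref{lemma:piecewise_dense}, together with the fact that the functions with H\"older constant strictly below $1$ are dense in $C_1^\alpha(\boxplus)$ (replace $g$ by $(1-\delta)g$), it is enough to approximate an arbitrary Lipschitz, standard strongly piecewise affine, $c'$-H\"older-$\alpha$ function $g$ with $c'<1$, say piecewise affine at level $n$. On every $Q\in\mathcal{F}_n$ on which $g$ is non-constant, $g|_Q$ is affine in a single coordinate, and I would replace it by $c_Q(\varphi\circ L_Q)+d_Q$, where $\varphi$ is the function constructed before Lemma \ref{a_neq_lemma}, $L_Q\colon Q\to[0,1]$ is the blow-up similarity of $Q$ followed by the appropriate coordinate projection, and $c_Q$, $d_Q$, and the orientation of $L_Q$ are chosen so that the new piece agrees with $g|_Q$ at the two extreme vertices of $Q$; on constant pieces $g$ is kept. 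Write $\tilde g$ for the resulting function.

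I would then verify, exactly as in \cite{sierc}: (i) the standard strongly piecewise affine hypothesis forces the extreme-vertex data of $g$ (hence $c_Q,d_Q,L_Q$) to match along shared edges, so $\tilde g$ is well defined and continuous, and $\tilde g=g$ on all of $V(\mathcal{F}_n)$; (ii) on a single $Q\in\mathcal{F}_n$ one has $|\tilde g(p)-\tilde g(q)|\le|c_Q|\,c_\varphi\,2^{n}|p-q|^{1/m}$ with $|c_Q|\le M\sqrt2\,2^{-nm}$ ($M$ the Lipschitz constant of $g$), so $\tilde g|_Q$ is $O(2^{n(1-m)})$-H\"older-$\tfrac1m$, and from this, via the identity $|p-q|^{1/m}=|p-q|^{1/m-\alpha}|p-q|^{\alpha}$ and the usual three-term estimate routed through nearby vertices of $\mathcal{F}_n$ (and through a common boundary point for points in adjacent cells), $\tilde g$ is $1$-H\"older-$\alpha$ once $n$ is large — this step is precisely where the hypothesis $\alpha<\tfrac1m$ is consumed, and it is the reason the construction cannot be pushed to $\alpha\ge\tfrac1m$, consistently with the phase transition exhibited for large $\alpha$ in Theorem \ref{thm:cross_large_alpha}; (iii) $\|\tilde g-g\|_\infty\to0$ by uniform continuity of $g$; (iv) for a.e.\ $r$ and each $Q$ with $c_Q\ne0$, the set $\tilde g^{-1}(r)\cap Q$ is the $L_Q$-preimage of $(\varphi^{-1}(r')\times[0,1])\cap\boxplus$, which — this is Lemma \ref{a_neq_lemma} transported into $Q$ by self-similarity — is a bi-Lipschitz copy of a self-similar set of box dimension $\tfrac1m$, while the finitely many constant cells produce level sets only for countably many $r$; as $\mathcal{F}_n$ is finite, $\tilde g^{-1}(r)$ is then a finite union of sets of box dimension $\le\tfrac1m$, so $\ldimb\tilde g^{-1}(r)\le\tfrac1m$ for a.e.\ $r$, i.e.\ $D_{\underline{B}*}^{\tilde g}(\boxplus)\le\tfrac1m$.

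For the lower bound, since $\boxplus$ is connected, for every non-constant continuous $f\colon\boxplus\to\R$ the image $f(\boxplus)$ is a non-degenerate interval, and for $r$ in its interior (hence for a.e.\ $r$) the set $f^{-1}(r)=\boxplus\setminus(\{f<r\}\cup\{f>r\})$ separates $\boxplus$. A set separating $\boxplus$ must meet, at every scale $2^{-nm}$, the self-similarly nested thin necks of $\boxplus$ — the copies of $\boxplus_{\text{thin, hor}}$ and $\boxplus_{\text{thin, ver}}$, whose relevant sections are self-similar sets of Hausdorff dimension $\tfrac1m$ — and a mass-distribution argument along these necks (parallel to, but far simpler than, the conductivity argument of Section \ref{sec:sier_triangle_lower}, and with no dependence on the regularity of $f$, hence valid for every $\alpha\ge0$) produces a probability measure $\mu$ on $f^{-1}(r)$ with $\mu(U)\lesssim|U|^{1/m}$, whence $\dim_H f^{-1}(r)\ge\tfrac1m$ by the mass distribution principle, Theorem \ref{thMDP}; equivalently, this is $\dim_H C\ge\dim_{tH}\boxplus-1$ for separators $C$ of $\boxplus$ together with the routine evaluation $\dim_{tH}\boxplus=1+\tfrac1m$. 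Thus $D_*^f(\boxplus)\ge\tfrac1m$ for \emph{every} $f\in C_1^\alpha(\boxplus)$, so in particular $D_*(\alpha,\boxplus)\ge\tfrac1m$. I expect the genuine difficulty to lie entirely in the upper bound — specifically in carrying out the self-similar substitution of copies of $\varphi$ so that the result is simultaneously continuous, globally $1$-H\"older-$\alpha$ (the H\"older verification in (ii) being the delicate point), and has level sets that are finite unions of copies of level sets of $\varphi$; the lower bound is the easy half and, as noted, survives for all $\alpha$.
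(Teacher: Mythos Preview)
Your upper bound argument matches the paper's: both reduce via Lemma~\ref{*lemdfb} and Lemma~\ref{lemma:piecewise_dense} to approximating standard strongly piecewise affine functions, then on each $Q\in\mathcal{F}_n$ replace the affine piece by a scaled and shifted copy of $\Phi$ (equivalently $\varphi$), and finally invoke Lemma~\ref{a_neq_lemma} so that almost every level set is a finite union of similar copies of a set of box dimension $\tfrac1m$. Your H\"older verification, including the identification of the point at which $\alpha<\tfrac1m$ enters, is exactly the content the paper leaves as a ``standard argument''.

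For the lower bound, however, the paper takes a shorter and more elementary route than you do. Rather than invoking separators or topological Hausdorff dimension, it uses the concrete inclusion $B\times[0,1]\subset\boxplus$, where $B$ is a self-similar Cantor set of dimension $\tfrac1m$ (the one generated by the two extreme base-$2^m$ digits). For generic $f$ the restriction to $\{0\}\times[0,1]$ is non-constant, so its image contains an interval $[a-\delta,a+\delta]$; by uniform continuity the same holds (with a smaller interval) for $f(\{x\}\times[0,1])$ whenever $x\in B\cap[0,x_0]$. Hence for every $r$ in that smaller interval, $\Pr_x(f^{-1}(r))\supset B\cap[0,x_0]$, and since projections do not increase Hausdorff dimension, $\dim_H f^{-1}(r)\ge\tfrac1m$ on a set of $r$'s of positive measure. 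This yields $D_*(\alpha,\boxplus)\ge\tfrac1m$ directly, with no appeal to the machinery of \cite{BBEtoph,BBElevel}.

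Your separator/mass-distribution sketch is a reasonable alternative in spirit, but as written it carries a gap: you assert that every set separating $\boxplus$ has Hausdorff dimension at least $\tfrac1m$, and that $\dim_{tH}\boxplus=1+\tfrac1m$ is ``routine''. Neither is immediate. In fact the vertical slice $\{1/2\}\times[0,1]\cap\boxplus$ consists of two full line segments (dimension $1$), not a $\tfrac1m$-dimensional set, so the ``thin necks'' are not uniformly $\tfrac1m$-dimensional at every location; one must choose the cuts at generic points of the associated Cantor set, and then argue that \emph{arbitrary} separators are forced through comparably thin pieces at every scale. This can be made to work, but it is a genuine piece of analysis rather than a one-line remark, whereas the paper's projection argument sidesteps the issue entirely.
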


\begin{proof}
First we prove that $\frac{1}{m}$ is an upper bound on $D_{\underline{B}*}(\alpha, \boxplus)$, and hence on $D_{*}(\alpha, \boxplus)$ as well.

Due to Lemma \ref{*lemdfb}, it suffices to construct a dense set $\mathcal{T}$ of functions with 
$D_{\underline{B}*}^{f}(\boxplus) \leq \frac{1}{m}$ for $f\in \mathcal{T}$.
Due to Lemma \ref{lemma:piecewise_dense}, it suffices to verify that for any standard strongly piecewise affine $1^{-}$-Hölder-$\alpha$ 
function $f_0$ and $\varepsilon>0$, we can find $f$ with $D_{\underline{B}*}^{f}(\boxplus)\leq \frac{1}{m}$ such that $\|f-f_0\|<\varepsilon$.
Let $M$ be the Lipschitz constant of $f_0$. 
Let $n \in \mathbb{N}$ to be fixed later such that $f_0$ is standard piecewise affine on the $n$th level. We will define $f$ so that
it agrees with $f_0$ on $V(\mathcal{F}_n)$. Moreover, for $Q\in \mathcal{F}_n$ let $\Psi$ be a similarity which maps $Q$ to $[0,1]^2$
so that if $\Phi(\Psi(v))=\Phi(\Psi(v'))$ for some vertices $v, v'\in Q$, then $f_0(v)=f_0(v')$. 
Actually, one can find $a,b\in\mathbb{R}$
with $f_0(v)=a\Phi(\Psi(v)) + b$ for any vertex $v\in Q$, and $|a|<M$. Now define $f$ on $Q\cap \boxplus$ by
$f(x)=a\Phi(\Psi(x)) + b$. Due to the uniform continuity of $f_0$, this construction satisfies $\|f-f_0\|<\varepsilon$ for large enough $n$.
Moreover, if the similarity ratio is large enough, one quickly obtains that $f$ is $1^{-}$-Hölder-$\alpha$ in any $Q\in \mathcal{F}_n$,
which yields the same conclusion globally on $\boxplus$, using a standard argument. (The idea is the following: $f$ is a perturbation of $f_0$ with a particular structure, 
where $f_0$ is a Lipschitz function 
and $f$ is locally $1^{-}$-Hölder-$\alpha$. We need to bound the change of $f$ over large distances. Since any two points can be approximated by auxiliary points in which $f$ coincides with $f_0$,
one can use the Lipschitz bound to effectively bound the change of $f$ and prove that $f$ satisfies the 1-Hölder-$\alpha$ bound over large distances as well. 
The details of the calculation are left to the reader, and can be essentially found
in the proof of Lemma 4.4 in \cite{sier},
in which perturbations of the same type are considered.)
Finally, almost every level set consists of finitely many similar images
of level sets of $\Phi$. That is, $D_{\underline{B}*}^{f}(\boxplus)\leq \frac{1}{m}$ for a dense set of functions.
It concludes the proof $D_{\underline{B}*}(\alpha, \boxplus) \leq \frac{1}{m}$.

For the other direction, notice that $B\times [0,1]\subset \boxplus$, where $B = \Pr_x(\boxplus_{\text{thin, ver}})$ is a strongly separated self-similar set determined by 2 similarities of
ratio $2^{-m}$.
The generic $1$-H\"older-$\alpha$ function $f$ is  non-constant on $\{0\}\times [0,1]$, and as this set is connected, hence $f(\{0\}\times [0,1])$ contains an interval 
$[a-\delta, a+\delta]$. Then due to continuity, $f(\{x\}\times [0,1])$ contains $[a-\frac{\delta}{2}, a+\frac{\delta}{2}]$ for $0\leq x \leq x_0$, $x\in B$.
That is, for any $r\in [a-\delta/2, a+\delta/2]$, the projection of $f^{-1}(r)$ to the first coordinate contains $[0, x_0]\cap B$,
which set has dimension $\frac{1}{m}$ due to the self-similarity of $B$. Consequently, $\dim_H(f^{-1}(r))\geq \frac{1}{m}$ (resp. $\underline{\dim}_B(f^{-1}(r))\geq \frac{1}{m}$) for the generic function
and a positive measure of $r$s. It concludes the proof.
\end{proof}

%%%%%%%%%%%%%%%%%%%%%%%%%%%%%%%%
%%%%%%%%%%%%%%%%%%%%%%%%%%%%%%%%
%%%%%%%%%%%%%%%%%%%%%%%%%%%%%%%%
%%%%%%%%%%%%%%%%%%%%%%%%%%%%%%%%

\begin{theorem} \label{thm:cross_large_alpha}
If $m$ is large enough, there exists $0<\alpha_1<1$, such that $D_*(\alpha, \boxplus)> \frac{1}{m}$ (resp. $D_{\underline{B}*}(\alpha, \boxplus)> \frac{1}{m}$) for $\alpha \in (\alpha_1, 1]$.
\end{theorem}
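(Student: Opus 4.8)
The plan is to run the conductivity argument for $\boxplus=\boxplus(m)$ in analogy with the argument for $\Delta$ in Section~\ref{sec:sier_triangle_lower}, but now taking advantage of the fact that for large $\alpha$ the narrow ``thin square'' corridors of $\boxplus$ are no longer able to absorb all the oscillation of the function, forcing level sets into the ``thick'' regions, which are far more spread out. Concretely, I would define a conductivity function $\kappa$ on the squares of the self-similar construction as follows: on a square $Q'\in\mathcal F_{n-1}$, its two families of thin squares inherit the conductivity $\kappa(Q')$ (the level set can potentially sneak through them cheaply, just as in the $A$-set used for $\Phi$), while for a thick base square $Q\subseteq Q_0$ of depth $l$ inside the thick square $Q_0\subseteq Q'$ we set $\kappa(Q):=\beta^{\,l}\kappa(Q')$ for a fixed decay factor $\beta\in(0,1)$ (say $\beta=\tfrac12$). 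The key geometric point — the analogue of \eqref{approxlevelunion} and of Lemma~\ref{mu-lemma} — is that a level set of a $1$-Hölder-$\alpha$ function that enters a thick square at depth $\le l$ but does not reach the center must cross through $\Omega(l)$ layers of thick base squares, hence the mass it can deposit on a depth-$l$ square is at most $\beta^{\,l}$ times the mass on $Q_0$; combined with a connectivity/graph-removal argument as in the proof of Lemma~\ref{mu-lemma} this gives $\mu(Q)\le\kappa(Q)$ for the measure $\mu$ built by equidistribution on $r$-descendants.

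Next I would establish the two matching lemmas. The analogue of Lemma~\ref{d_1 lemma}: if for all large $n$ one has $\sup\{\kappa(Q):Q\in\mathcal F_n,\ Q\cap f^{-1}(r)\ne\emptyset\}<2^{-n d_1 m}$ for the relevant $d_1$, then $\dim_H f^{-1}(r)\ge$ (an explicit increasing function of $d_1$ that exceeds $1/m$ for $d_1$ bounded away from $0$), via the mass distribution principle exactly as in the proof of Theorem~\ref{allexpthm}. The analogue of Theorem~\ref{allexpthm}'s second half: for a.e.\ $r$ in a fixed subinterval of the range, the bad event ``$f^{-1}(r)$ meets a well-conducting square'' has total length bounded by $\sum_n L_{n,d_1,f}\le\sum_n M_{n,d_1,\alpha}$, where $M_{n,d_1,\alpha}$ is a sum over thick base squares of $|f(Q)|\le 2^{-n\alpha m}\cdot$ (diameter)$^\alpha$ weighted by how many squares of each depth / each conductivity level there are at level $n$. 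Counting those squares is the combinatorial heart: in $\boxplus(m)$ the number of depth-$l$ thick base squares in a single thick square is on the order of $(2^{m-1}-l)\cdot 2^{m-2}$ or so, and tracking the multinomial that arises over $n$ levels (as in the $\binom nk 6^k$ bookkeeping of \eqref{6-os szamolas}) and applying Stirling produces an exponential rate $c(d_1,\alpha,m,\beta)$; the series converges iff this rate is negative, which rearranges to an inequality of the shape $\alpha\ge H_m(d_1)$ for an explicit $H_m$. One then checks that as $m\to\infty$ the threshold $H_m(d_1)\to 0$ for every fixed $d_1$, equivalently $H_m^{-1}(\alpha)>1/m\cdot(\text{something}>1)$ once $\alpha$ is bounded below and $m$ is large; picking $\alpha_1=\alpha_1(m)$ to be the value where the resulting lower bound on $\dim_H f^{-1}(r)$ first exceeds $1/m$ finishes the pointwise statement, and Theorem~\ref{*thmgenex} / Lemma~\ref{*lemdfb} upgrade it to a statement about $D_*(\alpha,\boxplus)$ and $D_{\underline B*}(\alpha,\boxplus)$ (for the latter one instead uses the weaker hypothesis of Lemma~\ref{d_1 lemma_B}-type, giving the same or a better threshold).

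The main obstacle I expect is the combined geometric–combinatorial step: proving the mass bound $\mu(Q)\le\kappa(Q)$ for the depth-graded conductivity requires a careful ``remove the edges of $Q$ and the level set still connects the two sides'' argument through a thick square whose internal structure (4 thick subsquares plus thin corridors) is more intricate than the three-triangle picture of $\Delta$, and simultaneously one must pin down the exact counts of depth-$l$ squares at each level of the iteration so that the Stirling asymptotics yield a clean threshold $H_m$. A secondary subtlety is making sure the decay factor $\beta$ is chosen so that the conductivity stays in $(0,1]$, decays fast enough that thin corridors genuinely force high-dimensional level sets, yet slowly enough that the counting series still converges for $\alpha$ close to $1$; I would fix $\beta=\tfrac12$ and verify both requirements hold for all sufficiently large $m$, exploiting that the thin corridors have relative width $2^{-m}\to 0$ so their ``escape route'' becomes negligible precisely in the large-$m$ regime where the theorem is claimed.
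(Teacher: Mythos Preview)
Your overall strategy (conductivity scheme, Borel--Cantelli on the measure of ``bad'' $r$'s, mass distribution principle) is the right one, and it matches the paper's route. But the specific conductivity you propose has a real error that breaks the argument.

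You set $\kappa(Q)=\beta^{l}\kappa(Q')$ for a thick base square $Q$ of depth $l$ inside $Q'$, and you justify the key bound $\mu(Q)\le\kappa(Q)$ by the observation that a level set reaching depth $l$ ``must cross through $\Omega(l)$ layers of thick base squares''. That geometric observation is correct, but it yields only \emph{linearly} many $r$-descendants of $Q'$ in $G_n(r)$: the mass $\mu(Q')$ gets split among those $\Omega(l)$ squares in one step (the passage from level $n-1$ to level $n$ of the IFS is a single division, not $l$ successive halvings), so you obtain at best $\mu(Q)\le\kappa(Q')/\Omega(l)$, not $\mu(Q)\le\beta^{l}\kappa(Q')$. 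A single curve that enters $Q_0$, plunges straight to depth $l$, and exits again touches on the order of $2l$ squares, whence $\mu(Q)\approx\mu(Q')/(2l)\gg 2^{-l}\mu(Q')$. With the exponential decay unavailable, your downstream counting and Stirling analysis (which was going to produce the threshold $H_m$) no longer balances, and the argument does not close.

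The paper avoids this difficulty by \emph{not} attempting a per-square mass bound. Instead it uses a coarse four-type scheme (corner squares keep full conductivity; thin squares and corner neighbours get $\tfrac12$; shallow thick squares get $\tfrac13$; deep thick squares---depth at least $L/2$ for a fixed parameter $L$---get $\tfrac1L$) and proves the \emph{sum} inequality $\sum_{Q''\in\mathcal D_r(Q)\cap\tau_{n+1}}\kappa(Q'')\ge\kappa(Q)$. This is the analogue of \cite[Lemma~3.1]{sierc} rather than of Lemma~\ref{mu-lemma}, and its proof is a one-line case check: if a depth-$\ge L/2$ square is hit, connectivity forces at least $L$ squares to be hit, each of conductivity $\ge 1/L$. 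From that sum inequality one reads off that if all level-$n$ squares meeting $f^{-1}(r)$ have $\kappa<2^{-n\beta}$ then there are at least $2^{n\beta}$ of them, and a direct binomial count of the high-conductivity squares (only a bounded number of Type~1 and Type~2 squares per parent, at most $O(2^m)$ of Type~3, and $O(2^{2m})$ of Type~4) shows that for $\alpha$ close to $1$ and $m$ large the bad set of $r$'s is summable. This gives $D_*(\alpha,\boxplus)\ge\beta/m>1/m$ for suitable $\beta\in(1,\log 3/\log 2)$.

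Two smaller remarks. First, your thin-square convention (full conductivity) is the \emph{opposite} of the paper's (conductivity $\tfrac12$), though this is not fatal since there are only boundedly many thin squares per parent and the Borel--Cantelli sum still converges. Second, invoking Lemma~\ref{*lemdfb} at the end is unnecessary: that lemma produces \emph{upper} bounds on $D_*$, whereas here one proves the dimension estimate for \emph{every} $1$-H\"older-$\alpha$ function, from which the lower bound on $D_*(\alpha,\boxplus)$ follows immediately by definition.
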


\begin{proof}
It suffices to prove the statement for $D_*(\alpha, \boxplus)$. The constant $m>2$ is to be fixed at the end of our argument.

The proof will rely on a well-defined conductivity scheme, similarly to what is introduced in Section \ref{sec:sier_triangle_lower}. However, a
simpler construction will be sufficient, making the argument more reminiscent to the proof of \cite[Theorem~3.2]{sierc}. 
First, we introduce similar terminology. If $r\notin f(V)$, which is satisfied for a positive measure of $r$s if $f$ is non-constant, that is, it is satisfied generically,
we define the $n$th approximation of $f^{-1}(r)$ denoted by $G_n(r)$ for any $n$
and $r \in f(\boxplus)$ as the union of some squares in $\mathcal{F}_{n}$. More explicitly, $Q\in \mathcal{F}_{n}$ 
is taken into $G_n(r)$ if and only if $Q$ has vertices
$v, v'$ such that $f(v) < r < f(v')$. The idea is that in this case $f^{-1}(r)$ necessarily intersects $Q$.

We introduce the following terminology: we say that $Q'\in\mathcal{F}_{n+k}$ is the $r$-descendant of $Q\subseteq \mathcal{F}_{n}(r)$ 
if there exists 
a sequence $Q_0=Q\supseteq Q_1\supseteq...\supseteq Q_k=Q'$ of squares such that $Q_i\in\mathcal{F}_{n+i}$ 
and $Q_i\subseteq G_{n+i}(r)$ for $i=0,1,...,k$. 
We denote the set of $r$-descendants of $Q$ by $\mathcal{D}_r(Q)$.

Let $L>0$ to be fixed later.
We will assign conductivity values to each element of $\mathcal{F}$. 
These values are defined by recursion: $\kappa_0([0,1]^2) = 1$. Now if $Q\in \mathcal{F}_{n}$
for $n>0$, there exists a unique $Q'\in \mathcal{F}_{n-1}$ with $Q\subseteq Q'$. 
We separate cases due to the position of $Q$ inside $Q'$. To aid understanding, we refer to Figure \ref{fig:cross_fractal} again.
\begin{itemize}
    \item {\bf Type 1 square.} If $Q$ contains a vertex of $Q'$, let $\kappa_{n}(Q) = \kappa_{n-1}(Q')$. 
    The number of such squares is $t_1 = 4$. 
    \item {\bf Type 2 square.} If $Q$ is the neighbor of a Type 1 square, or if $Q$ is a thin square of $Q'$, 
    let $\kappa_{n}(Q) = \frac{1}{2}\kappa_{n-1}(Q')$.
    The number $t_2$ of such squares is bounded by some constant $K$ independent of $L$ and $m$. 
    \item {\bf Type 3 square.} If $Q$ is not a Type 1 or Type 2 square, and it is of depth $l<L$, 
    let $\kappa_{n}(Q)=\frac{1}{3}\kappa_{n-1}(Q')$. 
    The number $t_3$ of such squares is bounded by $a_L \cdot 2^m$ for some constant $a_L$ independent of $m$.
    \item {\bf Type 4 square.} If $Q$ is not a Type 1 or Type 2 square, and it is of depth $l\geq \frac{L}{2}$, 
    let $\kappa_{n}(Q)=\frac{1}{L}\kappa_{n-1}(Q')$.
    The number $t_4$ of such squares is bounded by $b_L \cdot 2^{2m}$ for some constant $b_L<1$ independent of $m$.
\end{itemize}
We have the following analogue of \cite[Lemma~3.1]{sierc}:

%%%%%%%%%%%%%%%%%%%%%%%%%%%%%%%%
%%%%%%%%%%%%%%%%%%%%%%%%%%%%%%%%
%%%%%%%%%%%%%%%%%%%%%%%%%%%%%%%%

\begin{lemma}
    \label{lemma:cross_approxlevelconductivity}
    Assume that $Q\in G_{n}(r)$ and $k\geq 1$. 
    Then we have
    \begin{displaymath}
    \sum_{Q'\in \tau_{n+k}^l\cap\mathcal{D}_{r}(Q)}\kappa_{n+k}(Q')\geq{\kappa_{n}(Q)}.
    \end{displaymath}
\end{lemma}
\begin{proof}[Proof of Lemma \ref{lemma:cross_approxlevelconductivity}] 
It suffices to consider $k=1$. Now if $G_{n+1}(r)$ contains a Type 1 square or two Type 2 squares, we are done. 
In other cases $G_{n+1}(r)$ must contain a Type 3 or a Type 4 square. 
It is also straightforward to check, that if $G_{n+1}(r)$ contains a Type 3 (resp. Type 4) square, then
it must contain at least 3 (resp. $L$) squares with no lower conductivity, which verifies the statement. \end{proof}

Now assume that $r\in f(\boxplus) \setminus f(V)$. Then $r \in \inte (\conv(Q))$ for some $Q\in\mathcal{F}$. Due to self-similarity properties, 
we might assume $Q=[0, 1]^2$,
that is $G_0(r) = \{ [0, 1]^2 \}$.

For some $1<\beta<\frac{\log 3}{\log 2}$ to be fixed later we would like to estimate the number of squares in $\mathcal{G}_{n}(r)$ with conductivity above $2^{-n\beta}$. 
For $Q_n \in \mathcal{G}_{n}(r)$,
we can take $[0, 1]^2 = Q_0\supseteq Q_1\supseteq...\supseteq Q_n$ such that each square $Q_i$ in the sequence is in $G_i(r)$. 
Assume that $\kappa_m(Q_n) \geq 2^{-n\beta}$.
This yields that out of the squares $Q_1, ..., Q_n$, the number of Type 3 squares is at most $n\beta \frac{\log 2}{\log 3}$,
while the number of Type 4 squares is at most $n\beta \frac{\log 2}{\log L}$. 
(Note that $1 > \beta \frac{\log 2}{\log 3} > \frac{1}{2}$, and for large enough $L$ we have $\beta \frac{\log 2}{\log L} < \frac{1}{2}$.)
Hence the number of squares in $\mathcal{G}_{n}(r)$ satisfying $\kappa_n(Q_n) \geq 2^{-n\beta}$ can be estimated from above by 
$$(t_1+t_2)^n\cdot 
\left(\binom{n}{\lceil{n\left(1-\beta\frac{\log 2}{\log 3}\right)}\rceil} t_3^{\lceil{n\left(1-\beta\frac{\log 2}{\log 3}\right)}\rceil}\right) \cdot 
\left(\binom{n}{\lceil{n\beta\frac{\log 2}{\log L}}\rceil} t_4^{\lceil{n\beta\frac{\log 2}{\log L}}\rceil}\right) = : A_{12}\cdot A_3\cdot A_4 
.$$
(The first factor handles the choice of Type 1 and Type 2 squares,
the next factor the choice of Type 3 squares, while the last one the choice of Type 4 squares.)

With the above notation, we know that the union of $f$-images of such squares has measure at most 
\begin{equation} \label{eq:f_images}
    2^{-nm\alpha}A_{12}A_3A_4.
\end{equation} 
Recalling the bounds on the numbers $t_1, t_2$, we find
\begin{equation} \label{eq:f_images_1}
    A_{12} \leq (K+4)^n,
\end{equation}
    and due to the bound on $t_3, t_4$ and the general inequality $\binom{M}{k} \leq \left(\frac{Me}{k}\right)^k\leq \left(\frac{Me}{k}\right)^M$, 
\begin{displaymath} \label{eq:f_images_2}
    A_3 \leq \left(\frac{e}{1-\beta\frac{\log 2}{\log 3}}\right) ^ n \cdot \left(a_L 2^{m}\right)^{n\left(1-(\beta-\varepsilon)\frac{\log 2}{\log 3}\right)},
\end{displaymath}
\begin{equation} \label{eq:f_images_3}
    A_4 \leq \left(\frac{e}{\beta\frac{\log 2}{\log L}}\right) ^ n \cdot \left(2^{2m}\right)^{n(\beta+\varepsilon)\frac{\log 2}{\log L}}.
\end{equation}
for fixed $\varepsilon > 0$, if $n$ is large enough. (This $\varepsilon$ is used to get rid of ceiling functions.)
Plugging the estimates \eqref{eq:f_images_1}-\eqref{eq:f_images_3} into \eqref{eq:f_images}, 
we find that the union of $f$-images of such squares has measure at most
$$2^{-nm\alpha}\cdot\left(\frac{e}{1-\beta\frac{\log 2}{\log 3}}\right) ^ n \cdot \left(a_L 2^{m}\right)^{n\left(1-(\beta-\varepsilon)\frac{\log 2}{\log 3}\right)}
\cdot \left(\frac{e}{\beta\frac{\log 2}{\log L}}\right) ^ n \cdot \left(2^{2m}\right)^{n(\beta+\varepsilon)\frac{\log 2}{\log L}}
\cdot (K+4)^n=:c^n$$
for large enough $n$.

We claim that for fixed $\beta$, we have $c<1$ if $m, \alpha$ are large enough. Indeed, for fixed $\beta$, if we write $K_1$ 
for all the constant terms above
we can infer
$$c = K_1 \cdot \left(2^{1-(\beta-\epsilon)\frac{\log 2}{\log 3} + 2\cdot(\beta+ \epsilon)\frac{\log 2}{\log L} - \alpha}\right)^m$$
Consequently, for large enough $m$, $c<1$ holds if and only if
$1 - (\beta - \epsilon)\frac{\log 2}{\log 3} + (\beta + \epsilon)\frac{\log 4}{\log L}<\alpha$.
As $\varepsilon>0$ can be set to be arbitrarily small by increasing $n$, 
this inequality can be satisfied if and only if
$1-\beta\left(\frac{\log 2}{\log 3} - \frac{\log 4}{\log L}\right)<\alpha.$
(Note: $L$ can be chosen arbitrarily large by increasing $m$, but $m$ is fixed throughout the construction, so it is of minor importance.)
That is, we have achieved that if 
\begin{equation} \label{eq:beta-alpha}
    1-\beta\left(\frac{\log 2}{\log 3} - \frac{\log 4}{\log L}\right)<\alpha
\end{equation}
holds, 
then for large enough $n$ the $f$-image of squares
in $G_n(r)$ with conductivity exceeding $2^{-n\beta}$ is bounded by $c^n$ for $c<1$. It yields that apart from a null-set of level sets,
the level sets can intersect such squares only for finitely many $n$.
That is, for almost every level set and large enough $n$, $G_n(r)$ contains at least $2^{n\beta}$ elements of $\mathcal{F}_n$.
In other words, $f^{-1}(r)$ intersects at least $2^{n\beta}$ non-overlapping squares of diameter at least $2^{-nm}$.
Now applying  the mass distribution principle, Theorem \ref{thMDP} in the same spirit as in the proof of \cite[Theorem~3.2]{sierc}, we have that 
apart from a null-set, each level set has dimension at least $\frac{\beta}{m}$. 
As generically, there is a positive measure of nonempty level sets,
it yields that $D_*(\alpha, \boxplus)\geq \frac{\beta}{m}$.

This result is a nonempty statement if \eqref{eq:beta-alpha} can hold for some $\alpha<1$ and $\beta>1$, that is
if
\begin{equation} \label{eq:L_large}
\frac{\log 2}{\log 3} > \frac{\log 4}{\log L}.
\end{equation} 
In that case, for 
$\alpha_1 = 1-\frac{\log 3}{\log 2}\left(\frac{\log 2}{\log 3} - \frac{\log 4}{\log L}\right)$ and
$\alpha \in (\alpha_1, 1]$, we can find $\frac{\log 3}{\log 2} > \beta>1$ 
with $1-\beta\left(\frac{\log 2}{\log 3} - \frac{\log 4}{\log L}\right)<\alpha$. 
However, \eqref{eq:L_large} holds if $L$ is large enough. Now if $m$ is chosen large enough, 
the above argument works with this choice of $L$, which concludes the proof.

\end{proof}

\bibliographystyle{amsplain} 
\bibliography{sierimpr} 

\end{document}